\documentclass[%a4paper,
11pt,  reqno]{amsart}
\usepackage[margin=1.2in,marginparwidth=1.5cm, marginparsep=0.5cm]{geometry}

\usepackage{amsmath,amssymb,amscd,amsthm,amsxtra, esint}

\usepackage{soul}

\usepackage{booktabs} % nice tables
\usepackage{microtype}
\usepackage{amssymb}
\usepackage{mathrsfs}

\usepackage{upgreek}

\usepackage{color}
\usepackage[implicit=true]{hyperref}

\usepackage{xsavebox}

\usepackage{cases}%%%

\allowdisplaybreaks[2]

\sloppy

\hfuzz  = 0.5cm %allows mathformula to wiggle a bit

%\includeonly{biblio1}

%Couleurs
\definecolor{gr}{rgb}   {0.,   0.69,   0.23 }
\definecolor{bl}{rgb}   {0.,   0.5,   1. }
\definecolor{mg}{rgb}   {0.85,  0.,    0.85}
%\definecolor{gy}{rgb}   {0.8,  0.8,   0.8}
\definecolor{yl}{rgb}   {0.8,  0.7,   0.}
\definecolor{or}{rgb}  {0.7,0.2,0.2}

\usepackage{tikz}
%\usetikzlibrary{shapes.misc}
%\usetikzlibrary{shapes.symbols}
%\usetikzlibrary{shapes.geometric}
%
%
%\tikzset{
%	ddot/.style={circle,fill=white,draw=black,inner sep=0pt,minimum size=0.8mm},
%	>=stealth,
%	}
%
%
%\tikzset{
%	ddot2/.style={circle,fill=black,draw=black,inner sep=0pt,minimum size=0.8mm},
%	>=stealth,
%	}
%

\usepackage{marginnote}
\usepackage{scalerel} % to rescale the paraproduct symbols

\usetikzlibrary{shapes.misc}
\usetikzlibrary{shapes.symbols}
%\usetikzlibrary{snakes}
\usetikzlibrary{decorations}
\usetikzlibrary{decorations.markings}

\tikzset{
	dot/.style={circle,fill=black,draw=black,inner sep=0pt,minimum size=0.5mm},
	>=stealth,
	}

\tikzset{
	dot2/.style={circle,fill=black,draw=black,inner sep=0pt,minimum size=0.2mm},
	>=stealth,
	}

\tikzset{
	ddot/.style={circle,fill=black,draw=black,inner sep=0pt,minimum size=0.8mm},
	>=stealth,
	}

%\tikzset{
%	ddot/.style={diamond,fill=white,draw=black,inner sep=0pt,minimum size=1mm},
%	>=stealth,
%	}

\tikzset{decision/.style={ % requires library shapes.geometric
        draw,
        diamond,
        aspect=1.5
    }}

\tikzset{dia2/.style
={diamond,fill=white,draw=black,inner sep=0pt,minimum size=1mm},
	>=stealth,
	}

\tikzset{dia/.style
={star,fill=black,draw=black,inner sep=0pt,minimum size=1mm},
	>=stealth,
	}

\tikzset{dia/.style
={diamond,fill=black,draw=black,inner sep=0pt,minimum size=1.3mm},
	>=stealth,
	}

%%Max
\makeatletter
\def\DeclareSymbol#1#2#3{\xsavebox{#1}{\tikz[baseline=#2,scale=0.15]{#3}}}
\def\<#1>{\xusebox{#1}}
\makeatother

\newsavebox{\peA}
\newsavebox{\pneA}
\newsavebox{\plA}
\newsavebox{\pgA}
\newsavebox{\pleA}
\newsavebox{\pgeA}
\newsavebox{\pezA}

\savebox{\peA}{\tikz \draw (0,0) node[shape=circle,draw,inner sep=0pt,minimum size=8.5pt] {\scriptsize  $=$};}
\savebox{\pneA}{\tikz \draw (0,0) node[shape=circle,draw,inner sep=0pt,minimum size=8.5pt] {\footnotesize $\neq$};}
\savebox{\plA}{\tikz \draw (0,0) node[shape=circle,draw,inner sep=0pt,minimum size=8.5pt] {\scriptsize $<$};}
\savebox{\pgA}{\tikz \draw (0,0) node[shape=circle,draw,inner sep=0pt,minimum size=8.5pt] {\scriptsize $>$};}
\savebox{\pleA}{\tikz \draw (0,0) node[shape=circle,draw,inner sep=0pt,minimum size=8.5pt] {\scriptsize $\leqslant$};}
\savebox{\pgeA}{\tikz \draw (0,0) node[shape=circle,draw,inner sep=0pt,minimum size=8.5pt] {\scriptsize $\geqslant$};}
\savebox{\pezA}{\tikz \draw (0,0) node[shape=circle,draw,
fill=white, % color = white,
inner sep=0pt,minimum size=8.5pt]{} ;}

\def \peB{\mathchoice
{\scalebox{.7}{{\usebox{\peA}}}}
{\scalebox{.7}{{\usebox{\peA}}}}
{\scalebox{.7}{{\usebox{\peA}}}}
{}
}

\def \pezB{\mathchoice
{\scalebox{.7}{{\usebox{\pezA}}}}
{\scalebox{.7}{{\usebox{\pezA}}}}
{\scalebox{.7}{{\usebox{\pezA}}}}
{}
}

\newcommand{\pe}{\mathbin{{\peB}}}

\newcommand{\pez}{\mathbin{{\pezB}}}

\usetikzlibrary{decorations.pathmorphing, patterns,shapes}

\DeclareSymbol{dot}{-2.7}{\draw (0,0) node[dot]{};}

\DeclareSymbol{1}{0}{\draw[white] (-.4,0) -- (.4,0); \draw (0,0)  -- (0,1.2) node[dot] {};}
\DeclareSymbol{2}{0}{\draw (-0.5,1.2) node[dot] {} -- (0,0) -- (0.5,1.2) node[dot] {};}
\DeclareSymbol{3}{0}{\draw (0,0) -- (0,1.2) node[dot] {}; \draw (-.7,1) node[dot] {} -- (0,0) -- (.7,1) node[dot] {};}
\DeclareSymbol{30}{-3}{\draw (0,0) -- (0,-1); \draw (0,0) -- (0,1.2) node[dot] {}; \draw (-.7,1) node[dot] {} -- (0,0) -- (.7,1) node[dot] {};}
\DeclareSymbol{31}{-3}{\draw (0,0) -- (0,-1) -- (1,0) node[dot] {}; \draw (0,0) -- (0,1.2) node[dot] {}; \draw (-.7,1) node[dot] {} -- (0,0) -- (.7,1) node[dot] {};}
\DeclareSymbol{32}{-3}{\draw (0,0) -- (0,-1) -- (1,0) node[dot] {}; \draw (0,0) -- (0,-1) -- (-1,0) node[dot] {}; \draw (0,0) -- (0,1.2) node[dot] {}; \draw (-.7,1) node[dot] {} -- (0,0) -- (.7,1) node[dot] {};}
\DeclareSymbol{20}{-3}{\draw (0,0) -- (0,-1);\draw (-.7,1) node[dot] {} -- (0,0) -- (.7,1) node[dot] {};}
\DeclareSymbol{22}{-3}{\draw (0,0.3) -- (0,-1) -- (1,0) node[dot] {}; \draw (0,0.3) -- (0,-1) -- (-1,0) node[dot] {};\draw (-.7,1) node[dot] {} -- (0,0.3) -- (.7,1) node[dot] {};}
\DeclareSymbol{31p}{-3}{\draw (0,0) -- (0,-1) -- (1,0) node[dot] {}; \draw (0,0) -- (0,1.2) node[dot] {}; \draw (-.7,1) node[dot] {} -- (0,0) -- (.7,1) node[dot] {};
\draw (0,-1) node{\scaleobj{0.5}{\pez}};
\draw (0,-1) node{\scaleobj{0.5}{\pe}}; }
\DeclareSymbol{32p}{-3}{\draw (0,0) -- (0,-1) -- (1,0) node[dot] {}; \draw (0,0) -- (0,-1) -- (-1,0) node[dot] {}; \draw (0,0) -- (0,1.2) node[dot] {}; \draw (-.7,1) node[dot] {} -- (0,0) -- (.7,1) node[dot] {};
\draw (0,-1) node{\scaleobj{0.5}{\pez}};
\draw (0,-1) node{\scaleobj{0.5}{\pe}};}
\DeclareSymbol{22p}{-3}{\draw (0,0.3) -- (0,-1) -- (1,0) node[dot] {}; \draw (0,0.3) -- (0,-1) -- (-1,0) node[dot] {};\draw (-.7,1) node[dot] {} -- (0,0.3) -- (.7,1) node[dot] {};
\draw (0,-1) node{\scaleobj{0.5}{\pez}};
\draw (0,-1) node{\scaleobj{0.5}{\pe}};}
\DeclareSymbol{21p}{-3}{\draw (0,0.3) -- (0,-1) -- (1,0) node[dot] {};
\draw (-.7,1) node[dot] {} -- (0,0.3) -- (.7,1) node[dot] {};
\draw (0,-1) node{\scaleobj{0.5}{\pez}};
\draw (0,-1) node{\scaleobj{0.5}{\pe}};}

\DeclareSymbol{21}{-3}{\draw (0,0.3) -- (0,-1) -- (1,0) node[dot] {};
\draw (-.7,1) node[dot] {} -- (0,0.3) -- (.7,1) node[dot] {};
}

%%%%%%%%%%%%%%%%%%%%%
%%%%%%%%%%%%%%%%%%%%%
%%%%%%%%%%%%%%%%%%%%%

%
%\DeclareSymbol{10}{-3}
%{\draw[white] (-.4,0) -- (.4,0);
%\draw[  double, double distance = 0.25mm]
% %[ thick, decorate,decoration={zigzag,segment length = 0.04cm, amplitude = 0.3mm} ]
%(0,-1.3)  -- (0,0.1) node[dot2]{};
%\draw (-0.1,-1.3)  -- (0.1,-1.3) ;
%\draw (0,0.1)  -- (0,1.2) node[dot] {};}
%
\DeclareSymbol{11p}{-3}{\draw (0,0) node[dot2] {} -- (0,-1) -- (1,0) node[dot] {};
%\draw (-.2,0) -- (.2,0);
\draw (0,0) -- (0,1.2) node[dot] {};  %\draw (-.7,1) node[dot] {} -- (0,0) -- (.7,1) node[dot] {};
\draw (0,-1) node{\scaleobj{0.5}{\pez}};
\draw (0,-1) node{\scaleobj{0.5}{\pe}}; }
\DeclareSymbol{100}{-3}
{\draw[white] (-.4,0) -- (.4,0);
\draw (0,0) node[dot2] {} -- (0,-1)  ;
%\draw (-.2,0) -- (.2,0);
\draw (0,0) -- (0,1.2) node[dot] {};}  %\draw (-.7,1) node[dot] {} -- (0,0) -- (.7,1) node[dot] {};
%
%
%
%\DeclareSymbol{11}{-3}
%{%\draw[white] (-.4,0) -- (.4,0);
%\draw[  double, double distance = 0.25mm]
% %[ thick, decorate,decoration={zigzag,segment length = 0.04cm, amplitude = 0.3mm} ]
%(0,-1.3)  -- (0,0.1) node[dot2]{};
%\draw (0,0.1)  -- (0,1.2) node[dot] {};
%\draw  (0.15,-1.3) -- (1.1,0) node[dot] {};
%\draw (0,-1.3) node{\scaleobj{0.5}{\pez}};
%\draw (0,-1.3) node{\scaleobj{0.5}{\pe}};
%}
%
%
%%\draw[ -implies, thick, double, double distance = 0.5mm,  color=red ](C) -- (B)
%
%

\def\R{\mathbb{R}}

\def\r2n{{\mathbb{R}^{2n}}}

\def\N{\mathbb{N}}

\def\Z{\mathbb{Z}}

\def\G{\mathscr{G}}

\def\supp{\operatorname{supp}}

%%%%%%%%%%%%%%%%%%%%%%%%%%%
%%%%%%%%%%%%%%%%%%%%%%%%%%%
%%%%%%%%%%%%%%%%%%%%%%%%%%%
%%%%%%%%%%%%%%%%%%%%%%%%%%%
%%%%%%%%%%%%%%%%%%%%%%%%%%%
%%%%%%%%%%%%%%%%%%%%%%%%%%%

%%%%%%%%%%%%%%%%%%%%%%%%%%%%%%%%%%%%%%%%%%%%%%%%
%%%%%%%%%%%%%%%%%%%%%%%%%%%%%%%%%%%%%%%%%%%%%%%%
%%%%%%%%%%%%%%%%%%%%%%%%%%%%%%%%%%%%%%%%%%%%%%%%
%%%%%%%%%%%%%%%%%%%%%%%%%%%%%%%%%%%%%%%%%%%%%%%%
%%%%%%%%%%%%%%%%%%%%%%%%%%%%%%%%%%%%%%%%%%%%%%%%
%%%%%%%%%%%%%%%%%%%%%%%%%%%%%%%%%%%%%%%%%%%%%%%%

\usetikzlibrary{mindmap,backgrounds,trees,arrows,decorations.pathmorphing,decorations.pathreplacing,positioning,shapes.geometric,shapes.misc,decorations.markings,decorations.fractals,calc,patterns}

\tikzset{>=stealth',
         cvertex/.style={circle,draw=black,inner sep=1pt,outer sep=3pt},
         vertex/.style={circle,fill=black,inner sep=1pt,outer sep=3pt},
         star/.style={circle,fill=yellow,inner sep=0.75pt,outer sep=0.75pt},
         tvertex/.style={inner sep=1pt,font=\scriptsize},
         gap/.style={inner sep=0.5pt,fill=white}}
\tikzstyle{mybox} = [draw=black, fill=blue!10, very thick,
    rectangle, rounded corners, inner sep=10pt, inner ysep=20pt]
\tikzstyle{boxtitle} =[fill=blue!50, text=white,rectangle,rounded corners]

\tikzstyle{decision} = [diamond, draw, fill=blue!20,
    text width=4.5em, text badly centered, node distance=2.5cm, inner sep=0pt]
\tikzstyle{block} = [rectangle, draw, fill=blue!20,
    text width=2.7cm, text centered, rounded corners, minimum height=3em]

\tikzstyle{line} = [draw, very thick, color=black!50, -latex']

\tikzstyle{cloud} = [draw, ellipse,fill=red!40,
node distance=2.5cm,
    minimum height=1em]

\tikzstyle{cloud2} = [draw, ellipse,fill=red!30, text=white,text width=10em, node distance=2.5cm, text centered, minimum height=4em]

\tikzstyle{cloud3} = [draw, ellipse, fill=cyan!30,
node distance=2.5cm,
    minimum height=3em,
    text width=1.7cm]

\tikzstyle{cloud4} = [draw, ellipse,fill=orange!70, node distance=2.5cm,
   text width=2.1cm,
           minimum height=3em]

\tikzstyle{cloud5} = [draw, ellipse,fill=red!20, node distance=2.5cm,
    text centered,
    text width=3.0cm,
    minimum height=3em]

\tikzstyle{cloud6} = [draw, ellipse,fill=red!20, node distance=2.5cm,
    text width=1.5cm,
    text centered,
    minimum height=3em]

%% ABOUT positioning nodes

\tikzset{
    position/.style args={#1:#2 from #3}{
        at=(#3.#1), anchor=#1+180, shift=(#1:#2)
    }
}
%%%%%%%%%%%%%%%%%%%%%%%%%%%%%%%%%%%%%%%%%%%%%%%%
%%%%%%%%%%%%%%%%%%%%%%%%%%%%%%%%%%%%%%%%%%%%%%%%
%%%%%%%%%%%%%%%%%%%%%%%%%%%%%%%%%%%%%%%%%%%%%%%%
%%%%%%%%%%%%%%%%%%%%%%%%%%%%%%%%%%%%%%%%%%%%%%%%
%%%%%%%%%%%%%%%%%%%%%%%%%%%%%%%%%%%%%%%%%%%%%%%%
%%%%%%%%%%%%%%%%%%%%%%%%%%%%%%%%%%%%%%%%%%%%%%%%
%%%%%%%%%%%%%%%%%%%%%%%%%%%%%%%%%%%%%%%%%%%%%%%%
%%%%%%%%%%%%%%%%%%%%%%%%%%%%%%%%%%%%%%%%%%%%%%%%

\newtheorem{theorem}{Theorem} [section]
\newtheorem{maintheorem}{Theorem}
\newtheorem{lemma}[theorem]{Lemma}
\newtheorem{proposition}[theorem]{Proposition}
\newtheorem{remark}[theorem]{Remark}

\newtheorem{definition}[theorem]{Definition}

\newtheorem{oldtheorem}{Theorem}

 %[section]

%Lower/Upper bound appears below /above the integral sign
\DeclareMathOperator*{\intt}{\int}

%\DeclareMathOperator*{\supp}{supp}

%
%Roman I
\newcommand{\1}{\hspace{0.2mm}\text{I}\hspace{0.2mm}}
%Roman II
\newcommand{\II}{\text{I \hspace{-2.8mm} I} }
%%Roman III
\newcommand{\III}{\text{I \hspace{-2.9mm} I \hspace{-2.9mm} I}}
%
%%Roman IV

\newcommand{\noi}{\noindent}
\newcommand{\T}{\mathbb{T}}

\newcommand{\al}{\alpha}

\newcommand{\dl}{\delta}

\newcommand{\Dl}{\Delta}
\newcommand{\eps}{\varepsilon}

\renewcommand{\G}{\Gamma}

\newcommand{\ft}{\widehat}

\newcommand{\wt}{\widetilde}
\newcommand{\cj}{\overline}

\newcommand{\dd}{\partial}

\renewcommand{\l}{\ell}

\newcommand{\les}{\lesssim}
\newcommand{\ges}{\gtrsim}

%Japanese Bracket
\newcommand{\jb}[1]
{\langle #1 \rangle}

%
%bracket 2

%\newcommand{\jbb}[1]
%{[ #1 ]}

%\newcommand{\ind}{\mathbb 1}

\renewcommand{\S}{\mathcal{S}}

\newtheorem*{ackno}{Acknowledgements}

%XXXX
%\renewcommand{\<}{<}

%\newcommand{\If_{\pl, \pe}}{\mathscr{I}}

%\newcommand{\C}{\mathcal{C}}
\numberwithin{equation}{section}
\numberwithin{theorem}{section}

%%%

%\newcommand{\Q}{\mathbb{Q}}

%\newcommand{\B}{B}

%\newcommand{\D}{\mathcal{D}}

\newcommand{\too}{\longrightarrow}

\usepackage{comment}

%Commandes Younes

\DeclareMathOperator*{\ave}{ave}
\DeclareMathOperator{\diam}{diam}

\newcommand{\BMO}{\textit{BMO} }
\newcommand{\CMO}{\textit{CMO} }
\newcommand{\VMO}{\textit{VMO} }

\newcommand{\CZ}{Calder\'on-Zygmund }

%WIDE CHECK
\makeatletter
\DeclareRobustCommand\widecheck[1]{{\mathpalette\@widecheck{#1}}}
\def\@widecheck#1#2{%
   \setbox\z@\hbox{\m@th$#1#2$}%
   \setbox\tw@\hbox{\m@th$#1%
      \widehat{%
         \vrule\@width\z@\@height\ht\z@
         \vrule\@height\z@\@width\wd\z@}$}%
   \dp\tw@-\ht\z@
   \@tempdima\ht\z@ \advance\@tempdima2\ht\tw@ \divide\@tempdima\thr@@
   \setbox\tw@\hbox{%
      \raise\@tempdima\hbox{\scalebox{1}[-1]{\lower\@tempdima\box\tw@}}}%
   {\ooalign{\box\tw@ \cr \box\z@}}}
\makeatother

\makeatletter
\@namedef{subjclassname@2020}{%
  \textup{2020} Mathematics Subject Classification}
\makeatother

\begin{document}
\baselineskip = 14pt

\title[Compact $T(1)$ theorem \`a la Stein]
{Compact $T(1)$ theorem \`a la Stein}

\author[\'A. B\'enyi, G. Li, T. Oh, and R. H. Torres]
{\'Arp\'ad B\'enyi, Guopeng Li, Tadahiro Oh, and Rodolfo H. Torres}

\address{\'Arp\'ad B\'enyi, Department of Mathematics\\
516 High St, Western Washington University\\ Bellingham, WA 98225,
USA.}

\email{benyia@wwu.edu}

\address{
Guopeng Li, School of Mathematics and Statistics\\
Beijing Institute of Technology\\
Beijing\\
100081\\
China\\
and School of Mathematics\\
The University of Edinburgh\\
and The Maxwell Institute for the Mathematical Sciences\\
James Clerk Maxwell Building\\
The King's Buildings\\
Peter Guthrie Tait Road\\
Edinburgh\\
EH9 3FD\\
United Kingdom}

\email{guopeng.li@bit.edu.cn}

\address{
Tadahiro Oh, 
%School of Mathematics and Statistics\\
%Beijing Institute of Technology\\
%Beijing\\
%100081\\
%China\\
%and 
School of Mathematics\\
The University of Edinburgh\\
and The Maxwell Institute for the Mathematical Sciences\\
James Clerk Maxwell Building\\
The King's Buildings\\
Peter Guthrie Tait Road\\
Edinburgh\\
EH9 3FD\\
United Kingdom}

\email{hiro.oh@ed.ac.uk}

\address{Rodolfo H. Torres, Department of Mathematics\\
University of California\\
900 University Ave., Riverside, CA 92521, USA}

\email{rodolfo.h.torres@ucr.edu}

\subjclass[2020]{42B20, 47B07}

\keywords{$T(1)$ theorem; \CZ operator;
compactness, bounded mean oscillation,
vanishing mean oscillation,
continuous mean oscillation,
$\BMO\,$,  $\VMO$\,, $\CMO\,$}

\begin{abstract}
We prove a compact $T(1)$ theorem, involving quantitative estimates, analogous
to the quantitative classical $T(1)$ theorem due to Stein. We also discuss the
$C_c^\infty$-to-\CMO mapping properties of non-compact \CZ operators as well as the sequential completeness properties of some subspaces of \BMO under different topologies.
%As a byproduct of our analysis,
%we present a direct proof  that a compact \CZ operator
%is compact   on the Hardy space $H^1$
%%and on $\BMO$
%%into itself
%under the cancellation assumption $T(1)=  T^*(1) =0$.

\end{abstract}

\maketitle

\tableofcontents

\section{Introduction}

The classical $T(1)$ theorem of David and Journ\'e \cite{DJ} is a fundamental result in harmonic analysis that characterizes the $L^2$-boundedness of operators with Calder\'on-Zygmund kernels.
A remarkable insight of this theorem is that one only needs to test whether the actions (properly defined) of the operator and its formal transpose on the constant function 1 belong to \BMO and to verify
the so-called weak boundedness property.

There are other equally useful versions of the $T(1)$ theorem that avoid the introduction of the weak boundedness property. For example, in \cite{DJ},  the $L^2$-boundedness is also shown to be equivalent to the uniform boundedness (with respect to $\xi$) in \BMO of the operator and its transpose acting on the character functions $x\mapsto e^{ix\cdot\xi}$.  In a very elegant formulation, Stein~\cite{Stein} proved a quantitative version of the $T(1)$ theorem that
entirely avoids \`a priori mentioning \BMO in the statement of the theorem  or defining the operator on smooth $L^\infty$-functions. He instead considered appropriate uniform $L^2$-estimates for the operator and its transpose acting on normalized bump functions; see Theorem \ref{THM:T1Stein} below. Although the actual proof of Stein's version does use the definition of $T(1)$ and equivalent formulations of the weak boundedness property, the advantage of his formulation is that, once the theorem has been established, the definitions of such concepts do not need to  be considered when applying the theorem to specific applications. See also~\cite{BO2} for a further discussion on this result by Stein.

Much more recently, several authors have considered versions of the $T(1)$ theorem
characterizing  the situation when operators with Calder\'on-Zygmund kernels are not only bounded but also compact.
The quest for this characterization aims to further complete the \CZ theory and understand the role of $T(1)$ when an operator in such theory is better than bounded, but  it is not necessarily motivated by the need to prove compactness for particular examples.  However,  explicit compact \CZ operators do exist.
Beyond some known boundary integral operators on bounded $C^1$ domains\footnote{Such operators are also of \CZ type on Lipschitz domains, but the extra $C^1$-smoothness is generally needed for compactness.}, whose compactness is nowadays well-understood,
an interesting example is provided by a certain class of pseudodifferential operators of order zero in $\R^d$ originally introduced by Cordes \cite{Cor}. More recently, in  \cite{CST}, such operators were re-brought to light for a version of the result in \cite{Cor} even in a weighted setting; see the definition of such operators in Subsection~ \ref{example} below. Other examples are provided by appropriate paraproducts, but such operators are usually employed in the proof of compact versions of the $T(1)$ theorem
and thus their compactness need to be proved by different methods.

% In  \cite{MitSto}, Mitkovski and Stockdale proved an  $L^2$-compactness  version of the $T(1)$ theorem by replacing the weak boundedness property in the classical $T(1)$ theorem  with an appropriate weak compactness property coupled with the stronger requirement that $T(1)$ and $T^*(1)$ belong to \CMO $\subsetneq$ \BMO (\CMO denotes  the space of functions of {\it continuous mean oscillation}; see Definition \ref{DEF:2}
%below).
% The \CMO space is a celebrity in harmonic analysis in its own right, appearing naturally in compactness results ever since the classical
%compactness
% result of Uchiyama \cite{Uch} on commutators of Calder\'on-Zygmund operators with \CMO functions. For a  multilinear version of the compact $T(1)$ theorem, see the recent work of Fragkos, Green, and Wick \cite{FraGreWic}.
% It is worth mentioning that the results in \cite{MitSto, FraGreWic} avoid the additional condition on the kernel of a Calder\'on-Zygmund operator that appeared in the first version of
% such a compact $T(1)$ theorem  due to
% Villarroya \cite{V}.

In \cite{V},  Villarroya proved a compactness version of the $T(1)$ theorem by
 replacing the weak boundedness property
and the  condition $T(1), T^*(1) \in \BMO$
in the classical $T(1)$ theorem
(see Theorem \ref{THM:DJ})
with an appropriate weak compactness property
and   the stronger
requirement that $T(1)$ and $T^*(1)$ belong to
\CMO $\subsetneq$ \BMO (\CMO denotes  the space of functions of {\it continuous mean oscillation}; see Definition \ref{DEF:2}
below),
and by
 imposing some additional conditions on the kernel of the operator. %a \CZ operator.
 More recently, Mitkovski and Stockdale \cite{MitSto}
  in the linear case and Fragkos, Green, and Wick \cite{FraGreWic} in the multilinear setting, established compactness versions of the $T (1)$ theorem using a more general formulation of the weak compactness property but avoiding the additional conditions on the kernel of
  a \CZ  operator that appeared in the original version of Villarroya;
  see also a recent work \cite{CLSY}
%by
%Cao, Liu, Si, and Yabuta
for a version of the bilinear compact $T (1)$ theorem
on weighted Lebesgue spaces
(but with additional  conditions on the kernel of
  a bilinear \CZ  operator).
We note that the space \CMO is a celebrity in harmonic analysis in its own right, appearing naturally in compactness results
ever since the classical
compactness
 result of Uchiyama \cite{Uch} on commutators of Calder\'on-Zygmund operators with \CMO functions.

The main goal of this article is to present a version of a compact $T(1)$ theorem
in the spirit of Stein's $T(1)$ theorem on boundedness (Theorem \ref{THM:T1Stein}),
without any reference to the actions of $T$ and $T^*$ on the constant function $1$ in the statement of the result.
A secondary goal of this article is to discuss the $C_c^\infty$-to-\CMO mapping properties of non-compact Calder\'on-Zygmund operators, both of convolution and non-convolution types, thus filling a gap (and also fixing some incorrect statements) in the literature.
For this purpose,we also revisit some delicate topological properties of $\BMO$\,-type spaces;
see Appendix \ref{SEC:weak}.

This article is organized as follows.
In Section \ref{SEC:linear},  we go over some background  and definitions needed for the remainder of the paper.
After recalling
 the classical $T(1)$ theorem and its compact counterpart
 in Subsection \ref{SUBSEC:T1a},
 we state our main result, a compact $T(1)$ theorem \`a la Stein (Theorem \ref{THM:1})
 in Subsection  \ref{SUBSEC:T1b}.
Section \ref{SEC:4} is dedicated to the proof of Theorem~\ref{THM:1}.
In Section \ref{SEC:5}, we then consider the $C_c^\infty$-to-\CMO mapping properties
of non-compact  Calder\'on-Zygmund operators.
As a byproduct of our analysis,
we also present a direct proof  %(namely without duality)
that,
under the cancellation assumption $T(1)=  T^*(1) =0$,
 a compact \CZ operator
is compact   on the Hardy space $H^1$; see Proposition~\ref{PROP:cpt}.
In the appendices,
we discuss
non-compactness  of convolution operators
as well as
the fact about \BMO and some of its subspaces not being weakly sequentially  complete,
 which are of interest on their own.
The results in the appendices are probably known to a more specialized functional analysis audience or may follow from more abstract facts. Hence, we do not want to claim authorship on them but they are included here for completeness. We provide references when we could locate them in the literature,
but when we could not, we
present some essentially self-contained arguments for some results for the benefit of readers.

\section{Calder\'on-Zygmund operators, {\it BMO}, and \CMO}\label{SEC:linear}

A  linear  singular integral operator $T$ is a map,
a priori defined and continuous from $\S(\R^d)$ into $\S'(\R^d)$ (the usual Schwartz space and its dual),
 that takes the form
\begin{align}
T(f)(x) = \int_{\R^d} K(x, y) f(y) dy
\label{CZ1}
\end{align}

\noi
when the point $x$ is not in the support of $f$.
Here,  we assume  that, away from the diagonal
 $\Dl = \{ (x, y) \in \R^{2d}:\, x = y\}$,
 the distributional kernel $K$ of $T$ coincides with
a function that is locally integrable on $\R^{2d} \setminus \Dl$, which we still call $K$.
The formal transpose $T^*$ of $T$
is defined similarly
with
the kernel  $K^*$
given by
$K^*(x, y)  = K(y, x)$.

\begin{definition}
\rm

A locally integrable function $K$ on $ \R^{2d} \setminus \Dl$
is called a {\it \CZ  kernel}
if it satisfies the following two conditions:

\begin{itemize}
\item[\textup{(i)}]
For all $x, y \in \R^d$, we have
\begin{align}
|K(x, y)|  \les |x-y|^{-d}.
\label{CZ1a}
\end{align}

\smallskip

\item[\textup{(ii)}]
There exists $\dl \in (0, 1]$ such that
\begin{align}
|K(x, y) - K(x', y)|
& + |K(y, x) - K(y, x')|
\les \frac{|x - x'|^\dl}{|x-y|^{d+\dl}}
\label{CZ2}
\end{align}

\noi
for all $x, x', y \in \R^d$
satisfying
$|x - x'| < \frac 12 |x-y|$.

\end{itemize}

\end{definition}

We say that a linear operator $T$
of the form \eqref{CZ1}
with a \CZ kernel $K$
is a {\it  \CZ operator}  if $T$ extends
to a bounded operator on $L^{p_0}(\R^d)$
for some $1< p_0 < \infty$.
It is well-known \cite{Stein1} that
if  $T$ is a  \CZ operator,
then it is bounded on $L^p(\R^d)$ for all
$1< p < \infty$. Hence, in the following, we restrict our attention to the $L^2$-boundedness of such linear operators. It is also well  known  that a \CZ operator is
bounded from
$L^\infty(\R^d)$ to $\BMO\,(\R^d)$ and
also from $H^1(\R^d)$ to $L^1(\R^d)$.
Here,  $H^1(\R^d)$ denotes the Hardy space,
which is  the  predual of $\BMO$ \cite{FS},
and
 $\BMO$ denotes the space of functions of \emph{bounded mean oscillation}, which we now recall.

\begin{definition} \label{DEF:2} \rm
Given a locally integrable function $f$ on $\R^d$,
define the $\BMO$\,-seminorm by
\begin{align}
 \|f\|_{\BMO} =
\sup_Q M_Q(f) =
 \sup_{Q} \frac{1}{|Q|} \int_Q|f(x) - \ave_Q f|\, dx,
\label{M1}
\end{align}
where the supremum is taken over all cubes $Q$ in $ \R^d$ with sides parallel to the coordinate axes,\footnote{In the following, it is understood that  all the cubes  have  sides parallel to the coordinate axes.}
and
\begin{align}
\ave_Q f  = \frac{1}{|Q|} \int_Q f(x) dx.
\label{ave1}
\end{align}
Then, we say that $f$ is of bounded mean oscillation if $\|f\|_{\BMO}< \infty$
and
we define $\BMO\,(\R^d)$ by
\[ \BMO\,(\R^d)  =
\big\{ f \in L^1_{\text{loc}}(\R^d):\,
\| f \|_{\BMO} < \infty \big\}.\]
As usual, the elements of this space need to be considered as equivalent classes of functions modulo additive constants.

We denote by $C^\infty_c(\R^d)$ (and by $C_0(\R^d)$, respectively)
 the space of $C^\infty$-functions with compact supports
(continuous functions vanishing at infinity, respectively).
  Also, we denote by $(C^\infty_c)_0(\R^d)$
the subspace of $C^\infty_c(\R^d)$  consisting of mean-zero functions.
We will often suppress the underlying space $\R^d$ from our notation.

The closure of $C_c^\infty$ in the $\BMO$ topology is called the space of functions of \emph{continuous mean oscillation}, and it is denoted by $\CMO$\,. By definition \CMO is a closed subspace of $\BMO$\,,
where the topology of \CMO is  inherited from that of $\BMO$\,, induced by the \BMO semi-norm in \eqref{M1}.
%Lastly, we recall that the Hardy space $H^1$ is the  predual of $\BMO$\,,
%while $H^1$ is the  dual of $\CMO$\,;
%see \cite{Stein1}

\end{definition}

We also recall the following characterization of the Hardy space (see, for example, \cite[Corollary~1 on p.\,221]{Stein1}).
Let $d \ge 2$.\footnote{When $d = 1$,
we replace the Riesz transforms by the Hilbert transform.}
Given $j = 1, \dots, d$, let $R_j$ be the Riesz transform
defined by $\ft {R_j(f)}(\xi) = i \frac{\xi_j}{|\xi|} \ft f(\xi)$,
 where $\widehat {~}$ denotes the Fourier transform.
Then, the Hardy space $H^1$
consists of functions $f \in L^1$
 such that  $R_j f \in L^1$, $j = 1, \dots, d$,
endowed with the norm:
\begin{align}
\|f||_{H^1} = \|f\|_{L^1} + \sum_{j=1}^{d} \| R_jf\|_{L^1}.
\label{H1}
\end{align}

Next, we recall a characterization of
 $\CMO$\,; see \cite[Lemma on p.\,166]{Uch}. We follow here Bourdaud's formulation in \cite[Th\'eor\`eme 7\,(iii) on p.\,1198]{Bour}. Given $f \in L^1_{\text{loc}}$,
we define
\begin{align}
\G_1(f) = \lim_{r \to 0}\bigg(
\sup_{\substack{|Q|  \le r}}
 \frac{1}{|Q|} \int_{Q} |f(x) - \ave_{Q} f|dx
\bigg),
\label{R1}
\end{align}

\noi
where the supremum is taken over all cubes
 of volume $|Q|\le r$
 and $\displaystyle\ave_{Q}f$ is as in~\eqref{ave1}.
Similarly, we define
\begin{align}
\G_2(f) = \lim_{r \to \infty}\bigg(
\sup_{|Q| \ge r}
 \frac{1}{|Q|} \int_{Q} |f(x) - \ave_{Q} f|dx
\bigg),
\label{R2}
\end{align}

\noi
where the supremum is taken over all cubes
 of volume $|Q| \ge r$.
Lastly, given a cube $Q\subset \R^d$,
we define
\begin{align}
\G_3^Q(f) = \limsup_{\substack{x_0 \in \R^d\\|x_0| \to \infty}}\bigg(
 \frac{1}{|Q|} \int_{Q+x_0} |f(x) - \ave_{Q+x_0} f|dx
\bigg).
\label{R3}
\end{align}

\noi
Then,
we have the following characterization
of $\CMO$
by the quantities $\G_1$, $\G_2$, and $\G_3^Q$.

\begin{lemma}\label{LEM:CMO}
Let $f \in \BMO$.
Then, $f \in \CMO$
if and only if
\begin{align}
\G_1(f) = \G_2(f) = \G_3^Q(f) = 0
\label{R4}
\end{align}

\noi
for any cube $Q \subset \R^d$.

\end{lemma}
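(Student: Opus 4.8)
The plan is to prove the two implications separately, relying on the density of $C_c^\infty$ in $\CMO$ and on approximation by suitable cutoffs. For the forward direction, suppose $f \in \CMO$. By definition there is a sequence $g_n \in C_c^\infty$ with $\|f - g_n\|_{\BMO} \to 0$. The key observation is that each of $\G_1$, $\G_2$, and $\G_3^Q$ is \emph{subadditive} and \emph{dominated by the $\BMO$-seminorm}: for any $h \in \BMO$ one has $\G_i(h) \le \|h\|_{\BMO}$ (and similarly $\G_3^Q(h) \le \|h\|_{\BMO}$), simply because each $\G_i$ is a limit/limsup of suprema of quantities $M_Q(h) \le \|h\|_{\BMO}$. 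Hence it suffices to check that $\G_1(g) = \G_2(g) = \G_3^Q(g) = 0$ for every $g \in C_c^\infty$, since then $\G_i(f) \le \G_i(f - g_n) + \G_i(g_n) \le \|f - g_n\|_{\BMO} \to 0$. For a fixed $g \in C_c^\infty$: $\G_1(g) = 0$ because $g$ is Lipschitz, so $M_Q(g) \lesssim \diam(Q)\|\nabla g\|_{L^\infty} \to 0$ as $|Q| \to 0$; $\G_2(g) = 0$ because for $|Q|$ large, $\ave_Q g$ and the local average of $|g - \ave_Q g|$ are both $O(|Q|^{-1}\|g\|_{L^1}) \to 0$; and $\G_3^Q(g) = 0$ for fixed $Q$ because once $|x_0|$ is large enough that $Q + x_0$ misses $\supp g$, the integrand vanishes identically.

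For the reverse direction, suppose $f \in \BMO$ satisfies \eqref{R4} for every cube $Q$. I would construct an explicit approximating sequence in $C_c^\infty$ in two stages: first truncate $f$ spatially to get compact support, then mollify. For the spatial truncation, pick a smooth cutoff $\chi_R$ equal to $1$ on the ball of radius $R$, supported in the ball of radius $2R$, with $\|\nabla \chi_R\|_{L^\infty} \lesssim R^{-1}$; set $f_R = \chi_R(f - \ave_{Q_R} f)$ where $Q_R$ is a cube of side $\sim R$ centered at the origin (subtracting the average is legitimate since $\BMO$ elements are defined modulo constants). One must show $\|f - (f - \ave_{Q_R}f)\chi_R\|_{\BMO} = \|(f - \ave_{Q_R}f)(1 - \chi_R)\|_{\BMO} \to 0$ as $R \to \infty$. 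Testing against a cube $Q$, one splits into the cases $Q$ small versus large, and $Q$ near the origin versus far away; the hypotheses $\G_1(f) = \G_2(f) = 0$ control the small and large scales, while $\G_3^Q(f) = 0$ together with the John--Nirenberg growth estimate $|\ave_Q f - \ave_{Q_R} f| \lesssim \|f\|_{\BMO}\log(2 + |\text{center}(Q)|/R)$ control the behaviour of the tail $f - \ave_{Q_R}f$ far from the origin. Once $f_R$ has compact support and small $\BMO$-distance to $f$, mollifying $f_R \ast \varphi_\eps$ with a standard mollifier gives a $C_c^\infty$ function, and $\|f_R - f_R \ast \varphi_\eps\|_{\BMO} \to 0$ as $\eps \to 0$ by the standard fact that mollification converges to the identity in $\BMO$ on functions with $\G_1 = 0$ (or, more crudely, one may invoke that this is already the content of Uchiyama's lemma, cited before the statement).

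The main obstacle is the reverse direction, specifically the bookkeeping in the estimate $\|(f - \ave_{Q_R}f)(1-\chi_R)\|_{\BMO} \to 0$: one needs to handle cubes $Q$ of \emph{every} size and location uniformly, and the interaction between the $\BMO$-growth of the average $\ave_{Q_R}f$ and the cutoff $1 - \chi_R$ is the delicate point. The three hypotheses $\G_1 = \G_2 = 0$ and $\G_3^Q = 0$ are each used in a genuinely different regime (small cubes, large cubes, and cubes of bounded size escaping to spatial infinity), which is exactly why all three quantities appear in the characterization. A clean way to organize this is to observe that on any cube $Q$ with $\supp(1-\chi_R) \cap Q \ne \emptyset$ one has $\diam(Q) \gtrsim$ nothing a priori, so one genuinely must partition the argument; I would carry this out by a standard but careful case analysis rather than seek a slicker route. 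Since the excerpt already grants us Lemma~\ref{LEM:CMO}'s underlying source (Uchiyama's lemma and Bourdaud's reformulation), an alternative and shorter route for the write-up is simply to cite \cite{Uch, Bour} for the equivalence and merely indicate how the three conditions arise; but the self-contained argument above is the one I would present if a proof is wanted.
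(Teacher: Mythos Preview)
The paper does not actually prove this lemma: immediately after stating it, the authors simply remark that the result is Bourdaud's Th\'eor\`eme~7\,(iii) (building on Uchiyama's lemma), stated there for balls rather than cubes, and that ``a straightforward modification of the argument in~\cite{Bour}'' gives the present version. So the paper's ``proof'' is a citation together with the observation that balls and cubes are interchangeable here.

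Your self-contained sketch is essentially the argument one finds in those references: the forward direction via subadditivity and checking the three limits on $C_c^\infty$ functions is exactly right, and the reverse direction via spatial truncation followed by mollification is the standard route. You also correctly flag the genuine work, namely the case analysis needed to show $\|(f-\ave_{Q_R}f)(1-\chi_R)\|_{\BMO}\to 0$, and that the three hypotheses $\G_1=\G_2=\G_3^Q=0$ are each used in a separate regime. Your final remark that one could simply cite \cite{Uch, Bour} is in fact precisely what the paper does, so you have already anticipated the authors' choice. If anything, your write-up is more informative than the paper's own treatment of this lemma.
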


In \cite{Bour},
Lemma \ref{LEM:CMO}
was stated in terms of
the analogues of
the quantities $\G_1$, $\G_2$, and $\G_3^Q$,
where cubes are replaced by
 balls.
A straightforward modification of the argument in~\cite{Bour}
yields Lemma \ref{LEM:CMO},
where we work with cubes.

\section{{\it T}(1) theorems and the main result}
\label{SEC:T1}

\subsection{Classical and compact {\it T}(1) theorems}
\label{SUBSEC:T1a}

In this subsection, we provide a brief discussion on the classical
$T(1)$ theorem (Theorem \ref{THM:DJ})
and its compact counterpart (Theorem \ref{THM:MS}).
 In order to do so, we need a few more definitions.

\begin{definition}\rm
We say that a function $\phi \in C_c^\infty$ is
a \emph{normalized bump function} of order $M$ if
$\supp \phi \subset B_0(1)$
and $\|\dd^\al \phi\|_{L^\infty} \leq 1$
for all multi-indices $\al$ with $|\al| \leq M$.
Here, given $r > 0$ and $x \in \R^d$,
  $B_x(r)$ denotes the ball of radius $r$ centered at $x$.

\end{definition}

Given $x_0 \in \R^d$ and $R>0$, we set
\begin{align}
\phi^{x_0, R}(x) = \phi\Big(\frac{x-x_0}{R}\Big).
\label{scaling}
\end{align}

\noi
The statement of the $T(1)$ theorem of David and Journ\'e \cite{DJ} is the following.

\begin{oldtheorem}[$T(1)$ theorem]\label{THM:DJ}
Let $T: \S \to \S'$ be a linear singular integral  operator
with a %standard
Calder\'on-Zygmund kernel.
Then, $T$ can be extended to a bounded operator on $L^2$
if and only if
it satisfies the following two conditions\textup{:}

\smallskip

\begin{itemize}
\item[\textup{(A.i)}]
$ T $ satisfies the weak boundedness property\textup{;}
there exists $M \in \mathbb N\cup \{0\}$
such that we have
\begin{align}
 \big| \big\langle T(\phi_1^{x_1, R}), \phi_2^{x_2, R}\big\rangle \big|\les R^d
\label{WBP1}
 \end{align}

\noi
for any normalized bump functions $\phi_1$ and $\phi_2$
of order $M$, $x_1, x_2 \in \R^d$,
and $R>0$.

\smallskip
\item[\textup{(A.ii)}]
$ T(1)$
and $T^*(1)$
are in $\BMO$\,.
\end{itemize}

\end{oldtheorem}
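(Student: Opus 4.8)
The plan is to establish the two implications separately; the necessity of (A.i)--(A.ii) is routine, while the sufficiency is the heart of the matter. \emph{Necessity:} suppose $T$ extends to a bounded operator on $L^2$. Since $\|\phi_i^{x_i,R}\|_{L^2}\les R^{d/2}$ for a normalized bump function, Cauchy--Schwarz gives $|\langle T(\phi_1^{x_1,R}),\phi_2^{x_2,R}\rangle|\le\|T\|_{L^2\to L^2}\|\phi_1^{x_1,R}\|_{L^2}\|\phi_2^{x_2,R}\|_{L^2}\les R^d$, which is (A.i) (with any order $M\ge 0$). For (A.ii) one first gives meaning to $T(1)$ as an element of $\BMO$, i.e.\ modulo constants: for $\psi\in(C^\infty_c)_0$ supported in a ball $B$, choose $\eta\in C^\infty_c$ equal to $1$ on a neighborhood of $B$ and set $\langle T(1),\psi\rangle:=\langle T\eta,\psi\rangle+\iint K(x,y)\bigl(1-\eta(y)\bigr)\psi(x)\,dy\,dx$, where the last integral converges absolutely by \eqref{CZ1a}, \eqref{CZ2} and $\int\psi=0$, and one checks independence of $\eta$. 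The bound $\|T(1)\|_{\BMO}\les\|T\|_{L^2\to L^2}+1$ then follows from the classical proof that a \CZ operator maps $L^\infty$ to $\BMO$: to control $M_Q(T1)$, split $1=\ind_{3Q}+\ind_{(3Q)^c}$, apply $L^2$-boundedness to the local piece and use the kernel smoothness \eqref{CZ2} on the far piece after subtracting a suitable constant. Since $K^*(x,y)=K(y,x)$ is again a \CZ kernel, the same applies to $T^*$.

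\emph{Sufficiency:} assume (A.i)--(A.ii). The strategy is (1) to subtract paraproducts so as to reduce to the cancellation case $T(1)=T^*(1)=0$, and then (2) to prove $L^2$-boundedness in that case by almost orthogonality. For Step~1, given $b\in\BMO$ one builds a paraproduct $\Pi_b f=\sum_j\Delta_j\bigl((\Delta_j b)\,S_{j-1}f\bigr)$ from a Littlewood--Paley decomposition $\mathrm{Id}=\sum_j\Delta_j$, $S_j=\sum_{k\le j}\Delta_k$. One checks that $\Pi_b$ has a \CZ kernel, satisfies the weak boundedness property, and obeys $\Pi_b(1)=b$ and $\Pi_b^*(1)=0$; the key point is that $\Pi_b$ is bounded on $L^2$ with $\|\Pi_b\|_{L^2\to L^2}\les\|b\|_{\BMO}$. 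This last fact is the standard square-function/Carleson-measure estimate: the discrete Carleson measure $\sum_j|\Delta_j b|^2$ (at scale $2^{-j}$) has constant $\les\|b\|_{\BMO}^2$, so Carleson's embedding theorem together with $\sup_j|S_{j-1}f|\les\mathcal M f$ yields $\|\Pi_b f\|_{L^2}^2\les\sum_j\int|\Delta_j b|^2|S_{j-1}f|^2\les\|b\|_{\BMO}^2\|f\|_{L^2}^2$. Setting $\wt T=T-\Pi_{T(1)}-(\Pi_{T^*(1)})^*$ then produces a singular integral operator with a \CZ kernel, still satisfying the weak boundedness property, and now with $\wt T(1)=\wt T^*(1)=0$.

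For Step~2, decompose $\wt T=\sum_{j,k}\Delta_j\wt T\Delta_k$ and prove the almost-orthogonality bound
\[
\bigl\|\Delta_j\wt T\Delta_k\bigr\|_{L^2\to L^2}\les 2^{-\dl|j-k|}.
\]
In the near-diagonal regime $|j-k|=O(1)$ this follows from the weak boundedness property together with the size bound \eqref{CZ1a} on the kernel. For $k\ll j$ one exploits $\wt T^*(1)=0$ (so that $\wt T^*$ of a bump has vanishing integral): this permits subtracting a constant from the $\Delta_k$-bump, after which the kernel smoothness \eqref{CZ2} in the first variable produces the gain $2^{-\dl(j-k)}$; the case $j\ll k$ is symmetric via $\wt T(1)=0$. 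The Cotlar--Stein almost-orthogonality lemma then gives $\|\wt T\|_{L^2\to L^2}\les 1$, hence $\|T\|_{L^2\to L^2}\les 1+\|T(1)\|_{\BMO}+\|T^*(1)\|_{\BMO}<\infty$. (One could run an equivalent argument in the language of Haar or wavelet expansions, but the paraproduct-plus-Cotlar--Stein route is the most economical to outline.)

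The main obstacles are precisely the two ingredients of the sufficiency direction: the $L^2$-boundedness of the paraproduct $\Pi_b$ for $b\in\BMO$, which rests on the Carleson-measure characterization of $\BMO$ and Carleson's embedding theorem, and the off-diagonal decay of $\Delta_j\wt T\Delta_k$, where one must upgrade the bare size bound \eqref{CZ1a} on the kernel to exponential decay by genuinely using the cancellation $\wt T(1)=\wt T^*(1)=0$ — the near-diagonal block being exactly where (A.i) cannot be dispensed with. Some care is also needed throughout to justify manipulations with $\wt T(1)$, which a priori lives only in $\BMO$, i.e.\ modulo constants.
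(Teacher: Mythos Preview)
The paper does not prove Theorem~\ref{THM:DJ} at all: it is stated as the classical $T(1)$ theorem of David and Journ\'e, with a citation to \cite{DJ}, and is used purely as background to motivate and prove the compact analogue (Theorem~\ref{THM:1}). There is therefore no ``paper's own proof'' to compare your proposal against.

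That said, your sketch is a correct outline of the standard proof (essentially the one in \cite{DJ} and \cite{Stein}): reduce to the cancellation case via paraproducts $\Pi_{T(1)}$ and $(\Pi_{T^*(1)})^*$, then establish $L^2$-boundedness of the reduced operator by Cotlar--Stein almost orthogonality, using the weak boundedness property for near-diagonal blocks and the cancellation conditions for off-diagonal decay. The paper does invoke the paraproduct construction (see \eqref{P0}--\eqref{P1}) and its key properties $\Pi_b(1)=b$, $\Pi_b^*(1)=0$, but only later in Section~\ref{SEC:5} and for a different purpose (constructing a counterexample in Proposition~\ref{no}), not to prove Theorem~\ref{THM:DJ}.
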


Since $T$ is a priori defined only in $\S$,
the expressions $T(1)$ and $T^*(1)$ need to be interpreted carefully;
see \eqref{X4} below. %\cite{DJ}.

\begin{definition} \label{DEF:WCP1} \rm

We say that 	
a linear singular integral  operator  $T: \S\to \S'$ with a Calder\'on-Zygmund kernel
has the {\it weak compactness property}
if there exists $M \in \mathbb N\cup \{0\}$
such that we have
\begin{align}
\lim_{|x_0|  + R + R^{-1} \to \infty}
R^{-d} \big| \big\langle T(\phi_1^{x_0 + x_1, R}), \phi_2^{x_0 + x_2, R}\big\rangle \big|
= 0
\label{WCP1}
 \end{align}

\noi
for any normalized bump functions $\phi_1$ and $\phi_2$
of order $M$ and  $x_1, x_2 \in \R^d$.

\end{definition}

Theorem \ref{THM:MS}, which we state below, is a compact counterpart of the $T(1)$ theorem (Theorem~\ref{THM:DJ}) and was recently proved in \cite{MitSto}. The original characterization of compactness of \CZ operators in \cite{V} states (a version of) the weak compactness property \eqref{WCP1} only in the ``diagonal case", that is with $x_1=x_2$, but imposes instead an additional behavior on the kernels of the operators. For a version of a compact $T(b)$ theorem, see \cite{V2}.

\begin{oldtheorem}[compact $T(1)$ theorem]\label{THM:MS}
Let $T$ be as in Theorem \ref{THM:DJ}.
Then, $T$ can be extended to a compact operator on $L^2$
if and only if
it satisfies the following two conditions\textup{:}

\smallskip

\begin{itemize}
\item[\textup{(B.i)}]
$ T $ satisfies the weak compactness  property.

\smallskip
\item[\textup{(B.ii)}]
$ T(1)$
and $T^*(1)$
are in $\CMO$\,.
\end{itemize}

\end{oldtheorem}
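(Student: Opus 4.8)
We outline how one could prove this equivalence, combining the classical $T(1)$ theorem (Theorem~\ref{THM:DJ}) with a paraproduct reduction and a Calder\'on reproducing formula argument.

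\emph{Necessity.} For the implication ``compact $\Rightarrow$ (B.i) and (B.ii)'' the plan is to exploit the defining feature of compact operators: they map weakly null sequences to norm null ones. Normalizing the test functions as $\psi_i=R^{-d/2}\phi_i^{x_0+x_i,R}$, one has $\|\psi_i\|_{L^2}=\|\phi_i\|_{L^2}\lesssim1$ while $\psi_1\rightharpoonup0$ weakly in $L^2$ whenever $|x_0|+R+R^{-1}\to\infty$ (the bumps spread out, concentrate, or escape to spatial infinity), so $R^{-d}\langle T(\phi_1^{x_0+x_1,R}),\phi_2^{x_0+x_2,R}\rangle=\langle T\psi_1,\psi_2\rangle\to0$, which is \eqref{WCP1}. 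For (B.ii) I would use the characterization in Lemma~\ref{LEM:CMO}: since $T(1)$ is defined modulo constants only through pairings with $g\in(C^\infty_c)_0$, and $L^1$-$L^\infty$ duality realizes $\G_1,\G_2,\G_3^Q$ as suprema of such pairings over mean-zero $g$ on the relevant cubes, one splits $T(1)=T(\eta_Q)+T(1-\eta_Q)$ with $\eta_Q$ a smooth cutoff equal to $1$ on a large dilate of $Q$; the tail contributes $O\big(\int_{|x-y|\gg\ell(Q)}|K(x,y)-K(x',y)|\,dy\big)$ to the mean oscillation of $T(1)$ over $Q$, which is made small in each of the three regimes by \eqref{CZ2}, and the main term is killed because $|Q|^{-1/2}\eta_Q$ is a normalized weakly null family in each limit and $T$ is compact. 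The same applies to $T^*(1)$ since $T^*$ is then compact as well, so \eqref{R4} follows.

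\emph{Sufficiency --- set-up.} Conversely, assuming (B.i) and (B.ii), note first that (B.i) gives the uniform bound \eqref{WBP1} (i.e.\ (A.i)) and $\CMO\subset\BMO$ gives (A.ii), so Theorem~\ref{THM:DJ} already provides $L^2$-boundedness of $T$; it remains to upgrade this to compactness. My first step would be to fix a standard paraproduct $b\mapsto\Pi_b$ (bounded on $L^2$ with $\|\Pi_b\|_{L^2\to L^2}\lesssim\|b\|_{\BMO}$, $\Pi_b(1)=b$, $\Pi_b^*(1)=0$), put $\Pi_1=\Pi_{T(1)}$, $\Pi_2=\Pi_{T^*(1)}$, and consider $\widetilde T:=T-\Pi_1-\Pi_2^*$, which satisfies $\widetilde T(1)=\widetilde T^*(1)=0$. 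The crucial lemma to prove here is that \emph{$b\in\CMO$ implies $\Pi_b$ compact on $L^2$}: by density one approximates $b$ in the \BMO norm by $b_n\in C^\infty_c$, observes that each $\Pi_{b_n}$ is compact (smooth compactly supported symbol, hence smoothing and spatially decaying, hence Hilbert--Schmidt after harmless truncations), and uses $\|\Pi_b-\Pi_{b_n}\|_{L^2\to L^2}\lesssim\|b-b_n\|_{\BMO}\to0$ together with the fact that norm limits of compact operators are compact. Granting this, $\Pi_1,\Pi_2^*$ are compact, hence satisfy the weak compactness property (by the necessity direction), hence so does $\widetilde T$, and it suffices to show $\widetilde T$ is compact. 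From here one may assume $T$ bounded on $L^2$, with the weak compactness property, and $T(1)=T^*(1)=0$.

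\emph{Sufficiency --- core.} The plan is a Calder\'on reproducing formula plus almost orthogonality. Pick radial $\psi\in C^\infty_c$ with $\int\psi=0$ and $\int_0^\infty\widehat\psi(t\xi)^2\,\frac{dt}{t}=1$ for $\xi\ne0$, set $Q_tf=\psi_t*f$ with $\psi_t=t^{-d}\psi(\cdot/t)$, so that $\int_0^\infty Q_t^2\,\frac{dt}{t}=\Id$ on $L^2$ and
\begin{align*}
T=\int_0^\infty\!\!\int_0^\infty Q_s\,(Q_sTQ_t)\,Q_t\,\frac{ds}{s}\,\frac{dt}{t}.
\end{align*}
From $T(1)=T^*(1)=0$ and \eqref{CZ1a}--\eqref{CZ2} one gets the standard almost-orthogonality estimate $\|Q_sTQ_t\|_{L^2\to L^2}\lesssim(\min(s,t)/\max(s,t))^{\theta}$ for some $\theta>0$, which re-proves $L^2$-boundedness via Schur's test in the $(s,t)$ variables, and the weak compactness property additionally forces, at each fixed ratio $s/t$, the norm of $Q_sTQ_t$ localized to $\{|x|>A\}$ or to $s\in(0,\varepsilon)\cup(N,\infty)$ to tend to $0$ uniformly as $A,N\to\infty$ and $\varepsilon\to0$. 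Then, for $0<\varepsilon<N<\infty$ and $A>0$, set
\begin{align*}
T_{\varepsilon,N,A}:=\ind_{B_0(A)}\bigg(\int_\varepsilon^N\!\!\int_\varepsilon^N Q_s\,(Q_sTQ_t)\,Q_t\,\frac{ds}{s}\,\frac{dt}{t}\bigg)\ind_{B_0(A)};
\end{align*}
with both the range of scales and the spatial support compact, $T_{\varepsilon,N,A}$ has an $L^2(\R^d\times\R^d)$ kernel, so it is Hilbert--Schmidt and hence compact, while $\|T-T_{\varepsilon,N,A}\|_{L^2\to L^2}\to0$ as $\varepsilon\to0$, $N\to\infty$, $A\to\infty$ --- the region $\{s/t<\varepsilon_0\}\cup\{s/t>\varepsilon_0^{-1}\}$ being small by the gain $\theta$ (uniformly in $A$) and the near-diagonal region $s\sim t$ being small thanks to the weak-compactness input. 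Thus $T$ is a norm limit of compact operators, hence compact; with the set-up step this makes $T=\widetilde T+\Pi_1+\Pi_2^*$ compact, finishing the argument.

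\emph{Main obstacle.} The hard part will be the near-diagonal regime $s\sim t$ at extreme scales and at spatial infinity in the core step: one has to convert the weak compactness property --- a statement about individual pairings $\langle T\phi_1,\phi_2\rangle$ --- into operator-norm smallness of the corresponding pieces of $T$, which requires an almost-orthogonality/Schur argument that genuinely records the vanishing, since weak boundedness alone yields only boundedness here (e.g.\ the Hilbert transform satisfies $H(1)=H^*(1)=0$ but is not compact). This is exactly where a compact $T(1)$ theorem goes beyond the classical one, and it is the crux of both Villarroya's original proof \cite{V} and the streamlined argument of Mitkovski and Stockdale \cite{MitSto}. A secondary technical point is the compactness of $\CMO$-paraproducts in the set-up step, which rests on the density of $C^\infty_c$ in \CMO together with the routine compactness of paraproducts with smooth, compactly supported symbols.
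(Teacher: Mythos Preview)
The paper does not prove Theorem~\ref{THM:MS}; it is quoted from the literature (see the sentence immediately preceding the theorem, which attributes it to Mitkovski and Stockdale \cite{MitSto}, with the original version due to Villarroya \cite{V}), and is used as a black box in the proof of the paper's main result, Theorem~\ref{THM:1}. So there is no ``paper's own proof'' to compare against.

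That said, your outline is a faithful sketch of the argument in \cite{MitSto}: the paraproduct reduction to the case $T(1)=T^*(1)=0$, the lemma that $\Pi_b$ is compact for $b\in\CMO$ (via density of $C^\infty_c$ in $\CMO$ and operator-norm continuity of $b\mapsto\Pi_b$), and the Calder\'on reproducing formula together with almost orthogonality to approximate $T$ in operator norm by compact pieces. Your identification of the main obstacle --- upgrading the \emph{pairing}-level vanishing in \eqref{WCP1} to \emph{operator-norm} smallness of the near-diagonal pieces at extreme scales and spatial infinity --- is exactly right and is the technical heart of \cite{MitSto,V}. For the necessity of (B.ii), note that the paper records a shorter route in Remark~\ref{REM:CMO}\,(b): a compact Calder\'on--Zygmund operator maps $L^\infty$ to $\CMO$ by \cite{PPV}, whence $T(1)\in\CMO$ directly; your cutoff-and-$\G_i$ argument is along the lines of what is actually carried out in \cite{PPV}.
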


We say that  a linear singular integral  operator $T$
with a %standard
Calder\'on-Zygmund kernel
is a {\it compact \CZ operator} if it is compact on $L^2$.

\begin{remark}\label{REM:CMO} \rm
 If $T$ is a compact \CZ operator, % $T$ is compact on $L^2$,
then it is also compact

\smallskip

\begin{itemize}
\item[(a)]
on
$L^p$ for any $1 < p < \infty$;
this follows from interpolation
of compactness
 \cite{Kra},

\smallskip

\item[(b)]
from $L^\infty$ to $\CMO$\,; see
\cite{PPV},
%\cite[Theorem 2.18]{PPV}.

\smallskip

\item[(c)]
from the Hardy space $H^1$ to $L^1$;
this follows from (b) applied to $T^*$
(which is compact on $L^2$ in view of Lemma \ref{LEM:com}\,(ii))
and duality;\footnote{Although $L^1$ is not the dual of $L^\infty$,
an argument using duality can still be applied because the $L^1$-norm can be computed by pairing with
$L^\infty$-functions.}
 see~\cite[p.\,1285]{OV},

\smallskip

\item[(d)]
from $L^1$ to $L^{1, \infty}$; see~\cite{OV},

\smallskip

\item[(e)]
 from $\BMO$ to $\CMO$
 (\cite{PPV})
 under the cancellation assumption that $T(1) = T^*(1) = 0$,

\smallskip

\item[(f)]
 from $ H^1$ to $H^1$
 under the cancellation assumption that $T(1) = T^*(1) = 0$;
 this claim follows from (e) and duality;
 see also   Proposition~\ref{PROP:cpt}.
\end{itemize}
\end{remark}

\begin{remark} \rm
Our definitions of the weak boundedness property \eqref{WBP1}
and the weak compactness property (Definition \ref{WCP1})
are slightly different from those in \cite{DJ, MitSto},
but one can easily check that they are equivalent.

We also note that, as for the weak boundedness property,
 it suffices to verify \eqref{WBP1} for $x_1 = x_2$;
see \cite{Hart1}.
This does not seem to be the case
for the weak compactness property~\eqref{WCP1}; hence, the need for the additional conditions on the kernels imposed in the compactness version in \cite{V}. However, we are not aware if any of these conditions are fully independent, a point that has not been addressed in the literature and may be interesting to explore further. We do not plan to pursue such a task here, as ~\eqref{WCP1} suffices for our purposes.
\end{remark}

\subsection{{\it T}(1)  theorems \`a la Stein}
\label{SUBSEC:T1b}

We first recall
a quantitative formulation of the classical $T(1)$ theorem due to Stein;
see \cite[Theorem 3 on p.\,294]{Stein}.

\begin{oldtheorem}[$T(1)$ theorem \`a la Stein]\label{THM:T1Stein}
Let $T$ be as in Theorem \ref{THM:DJ}.
Then, $T$ can be extended to a bounded operator
on $L^2$
if and only if
there exists $M \in \mathbb{N} \cup\{0\}$
such that
we have
\begin{align}
\label{S1}
  \|T(\phi^{x_0, R})\|_{L^2}+\|T^*(\phi^{x_0, R})\|_{L^2}\les R^\frac{d}{2}
\end{align}

\noi
for any normalized bump function $\phi$
of order $M$, $x_0 \in \R^d$,
and $R>0$.
\end{oldtheorem}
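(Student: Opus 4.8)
The plan is to prove the two directions separately. The forward direction is immediate: if $T$ extends to a bounded operator on $L^2$, so does $T^*$ with the same norm, and for any normalized bump function $\phi$ of any order $M$ a change of variables gives
\begin{align*}
\|\phi^{x_0,R}\|_{L^2}^2 = R^d\int_{\R^d}|\phi(y)|^2\,dy\le |B_0(1)|\,R^d
\end{align*}
(using $\supp\phi\subset B_0(1)$ and $\|\phi\|_{L^\infty}\le 1$), whence $\|T(\phi^{x_0,R})\|_{L^2}+\|T^*(\phi^{x_0,R})\|_{L^2}\lesssim R^{d/2}$. All the content is in the converse.

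For the converse, I would assume \eqref{S1} for some $M$ and deduce $L^2$-boundedness of $T$ by checking hypotheses (A.i) and (A.ii) of the David--Journ\'e theorem (Theorem~\ref{THM:DJ}). The weak boundedness property \eqref{WBP1} comes essentially for free: by Cauchy--Schwarz, \eqref{S1} applied to $\phi_1$, and the elementary bound $\|\phi_2^{x_2,R}\|_{L^2}\lesssim R^{d/2}$ noted above,
\begin{align*}
\big|\big\langle T(\phi_1^{x_1,R}),\phi_2^{x_2,R}\big\rangle\big|\le\|T(\phi_1^{x_1,R})\|_{L^2}\,\|\phi_2^{x_2,R}\|_{L^2}\lesssim R^{d}.
\end{align*}

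The real work is to show $T(1)\in\BMO$ (and, by the same argument applied to $T^*$, whose kernel $K^*(x,y)=K(y,x)$ also satisfies \eqref{CZ2}, that $T^*(1)\in\BMO$). Fix a cube $Q$ with center $x_Q$ and side length $\ell$, and choose a smooth cutoff $\eta$ adapted to $Q$: $\eta\equiv 1$ on a fixed large dilate $CQ$ (with $C=C(d)$ large), $\supp\eta$ contained in a bounded dilate of $Q$, and $\|\dd^\al\eta\|_{L^\infty}\lesssim\ell^{-|\al|}$ for $|\al|\le M$; up to a dimensional constant, $\eta$ has the form $\phi^{x_Q,C'\ell}$ for a normalized bump $\phi$ of order $M$, so \eqref{S1} applies to it. Splitting $1=\eta+(1-\eta)$, on $Q$ the element $T(1)\in\BMO$ --- defined only modulo constants, through its pairing against mean-zero test functions as in \eqref{X4} --- is represented, modulo an additive constant, by $T(\eta)+g$, where $g(x)=\int_{\R^d}\big(K(x,y)-K(x_Q,y)\big)(1-\eta(y))\,dy$. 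The local part is controlled directly by the hypothesis: Cauchy--Schwarz and \eqref{S1} give
\begin{align*}
\frac{1}{|Q|}\int_Q|T(\eta)(x)|\,dx\le |Q|^{-1/2}\|T(\eta)\|_{L^2}\lesssim |Q|^{-1/2}\ell^{d/2}\lesssim 1.
\end{align*}
The far part $g$ has oscillation $O(1)$ on $Q$: for $x\in Q$ and $y\in\supp(1-\eta)$ one has $|x-x_Q|<\tfrac12|x_Q-y|$, so \eqref{CZ2} gives $|K(x,y)-K(x_Q,y)|\lesssim\ell^\dl|x_Q-y|^{-d-\dl}$, which is integrable over $\{|y-x_Q|\gtrsim\ell\}$ with a bound uniform in $x\in Q$ and $\ell$; hence $\sup_{x\in Q}|g(x)|\lesssim 1$. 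Since the quantity $M_Q$ in \eqref{M1} is subadditive and annihilates additive constants, $M_Q(T(1))\le M_Q(T(\eta))+M_Q(g)\le\frac{2}{|Q|}\int_Q|T(\eta)|\,dx+2\sup_Q|g|\lesssim 1$, with implicit constants independent of $Q$, so $T(1)\in\BMO$. By Theorem~\ref{THM:DJ}, $T$ is bounded on $L^2$.

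The step I expect to be the main obstacle --- really the only non-routine point --- is the correct bookkeeping for $T(1)$. One must verify that the local representations $T(\eta)+g$ patch, over all cubes $Q$ and all admissible cutoffs, into a single well-defined element of $\BMO$; this is the standard consistency argument based on \eqref{CZ2} and the mean-zero cancellation in \eqref{X4}. Relatedly, the ``tail'' $\int_{\R^d}K(x,y)(1-\eta(y))\,dy$ need not converge absolutely --- the size bound \eqref{CZ1a} alone is not integrable at infinity --- so one genuinely needs that $T(1)$, hence $g$, lives modulo constants in order to replace this tail by the absolutely convergent difference built from $K(x,y)-K(x_Q,y)$. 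Once the cutoff is chosen correctly, the rest reduces to the short computations above; the whole point of Stein's reformulation is that the contribution of the singular part $T(\eta)$ near $Q$ is dominated, quantitatively, by the hypothesis \eqref{S1}, with no further analysis needed.
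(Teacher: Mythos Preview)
The paper does not prove this theorem; it is quoted as a known result from Stein's book (see the line ``We first recall a quantitative formulation of the classical $T(1)$ theorem due to Stein; see \cite[Theorem~3 on p.\,294]{Stein}''), with a pointer to \cite{BO2} for further discussion. So there is no ``paper's own proof'' to compare against.

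That said, your argument is correct and is exactly the standard one. The forward direction is trivial, weak boundedness follows from Cauchy--Schwarz and \eqref{S1}, and the $\BMO$ bound for $T(1)$ comes from the near/far split $1=\eta+(1-\eta)$ with the near piece controlled by \eqref{S1} and the far piece by the smoothness estimate \eqref{CZ2}. This is precisely Stein's proof and also the template the present paper adapts in Section~\ref{SEC:4} for the compact analogue: compare your decomposition $T(\eta)+g$ with the paper's splitting $T(\phi_R)=T(\phi_Q\phi_R)+T((1-\phi_Q)\phi_R)$ in \eqref{L8} and the estimate \eqref{Y4} for the tail, which is your bound on $g$ verbatim. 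Your final paragraph correctly isolates the only genuine subtlety---that the local representatives $T(\eta)+g$ are consistent across cubes and agree with the distributional definition \eqref{X4}---and this is handled in \cite[pp.\,294--296]{Stein} and \cite[Section~4]{BO2}.
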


Here,
the conditions (A.i) and (A.ii) in Theorem \ref{THM:DJ}
are replaced by the quantitative estimate~\eqref{S1} involving normalized bump functions. The advantage of Stein's version of the $T(1)$ theorem is that it completely avoids computing the action of the operator on the constant function 1, which in the literature is usually done only for some classical examples at a very formal level as it is very tedious to compute rigorously. It is natural to ask then whether a similar situation occurs in the study of compactness of \CZ operators.
In the following, we seek
for a suitable replacement of
the condition \eqref{S1}
for a compact $T(1)$ theorem \`a la Stein.
Before proceeding further, we first recall the following basic properties
of compact operators.

\begin{lemma}\label{LEM:com}
Let $X$ and $Y$ be Banach spaces
and $T:X\to Y$ be a continuous linear operator.

\smallskip

\begin{itemize}
\item[\textup{(i)}]
If $T$ is compact,
then $T$ maps  weakly convergent sequences
to strongly convergent sequences.

\smallskip

\item[\textup{(ii)}]
The operator
$T$ is compact if
and only if its transpose $T^*$ is compact.

\end{itemize}

\end{lemma}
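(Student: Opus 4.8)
Both statements are classical functional-analytic facts (part (ii) is Schauder's theorem), so the plan is to give the standard arguments.

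For (i), I would argue by contradiction using sequential compactness. Let $x_n \wto x$ weakly in $X$. By the uniform boundedness principle, $\{x_n\}$ is norm-bounded. If $Tx_n$ did not converge to $Tx$ in $Y$, there would be $\varepsilon > 0$ and a subsequence with $\|Tx_{n_k} - Tx\|_Y \ge \varepsilon$. Since $T$ is compact and $\{x_{n_k}\}$ is bounded, $\{Tx_{n_k}\}$ has a norm-convergent sub-subsequence $Tx_{n_{k_j}} \to y$. On the other hand, a bounded linear operator is continuous from the weak topology of $X$ to the weak topology of $Y$, so $Tx_{n_{k_j}} \wto Tx$; by uniqueness of weak limits $y = Tx$, contradicting $\|Tx_{n_{k_j}} - Tx\|_Y \ge \varepsilon$.

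For (ii), first suppose $T$ is compact and show $T^*: Y^* \to X^*$ is compact. Let $K = \overline{T(B_X)} \subset Y$, which is a compact metric space. Given a bounded sequence $\{y_n^*\}$ in $Y^*$ with $\|y_n^*\|_{Y^*} \le 1$, the restrictions $y_n^*|_K$ form a uniformly bounded, equicontinuous family of scalar functions on $K$ (equicontinuity because $|y_n^*(z) - y_n^*(z')| \le \|z - z'\|_Y$). By the Arzel\`a--Ascoli theorem, a subsequence $y_{n_k}^*|_K$ converges uniformly on $K$. Then for all $x \in B_X$,
\[
|(T^*y_{n_k}^* - T^*y_{n_j}^*)(x)| = |(y_{n_k}^* - y_{n_j}^*)(Tx)| \le \sup_{z \in K}|(y_{n_k}^* - y_{n_j}^*)(z)| \longrightarrow 0
\]
as $j, k \to \infty$, so $\{T^*y_{n_k}^*\}$ is Cauchy, hence convergent, in $X^*$; thus $T^*$ is compact. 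For the converse, if $T^*$ is compact, then by the direction just proved $T^{**}: X^{**} \to Y^{**}$ is compact. Writing $J_X, J_Y$ for the canonical isometric embeddings into the biduals, one has $T^{**} \circ J_X = J_Y \circ T$, so $J_Y(T(B_X)) = T^{**}(J_X(B_X))$ is relatively compact, hence totally bounded, in $Y^{**}$; since $J_Y$ is an isometry, $T(B_X)$ is totally bounded in $Y$, and therefore relatively compact by completeness of $Y$. Hence $T$ is compact.

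The only genuinely nontrivial ingredient is the Arzel\`a--Ascoli step in the proof of Schauder's theorem (part (ii)), together with the bidual trick for its converse; part (i) is a short contradiction argument. Since these are entirely standard, one may alternatively simply cite a functional analysis text, but the above sketch makes the note self-contained.
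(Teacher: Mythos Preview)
Your proof is correct; both arguments are the standard ones. The paper itself does not prove this lemma at all---it simply cites Reed and Simon \cite[Theorems VI.11 and VI.12(c)]{RS}---so your write-up is strictly more detailed than what appears there, and your closing remark that one could alternatively just cite a textbook is exactly what the authors chose to do.
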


Lemma \ref{LEM:com} is well known
and its proof can be  found in many textbook covering the subject;
see, for example,   Reed and Simons \cite[Theorems VI.11
and  VI.12 (c)]{RS}.

We now derive  the $L^2$- and $L^\infty$-conditions
for our formulation of a compact $T(1)$ theorem \`a la Stein.

\medskip

\noi
\underline{\bf $\pmb {L^2}$-condition:}
We first rewrite  the condition \eqref{S1}
 as
\begin{align*}
  \|T(R^{-\frac d2}\phi^{x_0, R})\|_{L^2}+\|T^*(R^{-\frac d2}\phi^{x_0, R})\|_{L^2}\les 1.
\end{align*}

\noi
Note that
$R^{-\frac d2} \phi^{x_0, R}$
converges weakly to $0$ in $L^2$
as $|x_0| + R + R^{-1} \to \infty$.
Suppose now that $T$ is compact on $L^2$.
Then,
from Lemma \ref{LEM:com}\,(i),
we see that
$R^{-\frac d2} T(\phi^{x_0, R})$
and
$R^{-\frac d2} T^*(\phi^{x_0, R})$
converge strongly to $0$ in $L^2$
as $|x_0| + R + R^{-1} \to \infty$,
namely,
\begin{align}
\lim_{|x_0| + R + R^{-1} \to \infty}
\bigg(
R^{-\frac d2}  \|T(\phi^{x_0, R})\|_{L^2}
+
R^{-\frac d2}   \|T^*(\phi^{x_0, R})\|_{L^2}
\bigg) = 0.
\label{R0}
\end{align}

\medskip

\noi
\underline{\bf $\pmb {L^\infty}$-condition:}
Let $T$ be a \CZ operator that is compact on $L^2$.
From Remark~\ref{REM:CMO}\,(b), we see that
$T$ is compact from
$L^\infty$ to $\CMO$\,.
Let  $\{f_n\}_{n \in \N}$ be  a weak-$\ast$ convergent sequence
in $L^\infty$.\footnote{Recall that $L^\infty(\R^d) = (L^1(\R^d))^*$.}
Then,
it follows from the Banach-Steinhaus theorem that
the sequence
$\{f_n\}_{n \in \N}$ is bounded in $L^\infty$,
and, hence, from the compactness
of $T$ from $L^\infty$ to $\CMO$\,,
we see that
 $\{T(f_{n})\}_{n \in \N}$ is precompact in $\CMO$\,.
In view of Lemma \ref{LEM:com}\,(ii),
the same conclusion holds for $T^*$.
This leads  to the $L^\infty$-condition (ii) in Theorem \ref{THM:1} below.

\medskip

We are now ready to state our main result, a  compact counterpart of the $T(1)$ theorem \`a la Stein.

\begin{maintheorem}[compact $T(1)$ theorem \`a la Stein]\label{THM:1}
Let $T$ be as in Theorem \ref{THM:DJ}.
Then, $T$ can be extended to a compact operator
on $L^2$
if and only if
the following two conditions hold\textup{:}

\begin{itemize}
\item[(i)]
\textup{($L^2$-condition).}
There exists $M \in \mathbb{N} \cup\{0\}$
such that
\eqref{R0}
 holds for any normalized bump function $\phi$
of order $M$.

\smallskip

\item[(ii)]
\textup{($L^\infty$-condition).}
Given any
weak-$\ast$ convergent sequence $\{f_n\}_{n \in \N}$
in $L^\infty$
such that  $\{T(f_{n})\}_{n \in \N}$
and $\{T^*(f_{n})\}_{n \in \N}$
are bounded in $\CMO$\,,
both the sequences
  $\{T(f_{n})\}_{n \in \N}$
and $\{T^*(f_{n})\}_{n \in \N}$
are precompact in $\CMO$\,.

\end{itemize}

\end{maintheorem}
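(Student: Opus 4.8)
The plan is to prove the two implications separately, the forward one being the short ``soft'' direction and the converse the substantial one, which I would reduce to the compact $T(1)$ theorem (Theorem~\ref{THM:MS}).

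\emph{Necessity.} This is essentially the computation preceding the statement. If $T$ is compact on $L^2$, then, checking first on $C_c^\infty$ and using that $\|R^{-d/2}\phi^{x_0,R}\|_{L^2}=\|\phi\|_{L^2}$ while each of the regimes $R\to0$, $|x_0|\to\infty$, $R\to\infty$ forces the pairing against a fixed test function to vanish, one gets $R^{-d/2}\phi^{x_0,R}\rightharpoonup0$ in $L^2$ as $|x_0|+R+R^{-1}\to\infty$; Lemma~\ref{LEM:com}\,(i)--(ii) then yields \eqref{R0}. For (ii), $T$ and $T^*$ are compact from $L^\infty$ into $\CMO$ by Remark~\ref{REM:CMO}\,(b), a weak-$\ast$ convergent sequence $\{f_n\}$ in $L^\infty$ is bounded by the Banach--Steinhaus theorem, and a compact operator maps it to a precompact sequence.

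\emph{Sufficiency.} Assume (i) and (ii); I will verify the hypotheses (B.i) and (B.ii) of Theorem~\ref{THM:MS}. For (B.i), Cauchy--Schwarz gives
\[
R^{-d}\big|\big\langle T(\phi_1^{x_0+x_1,R}),\phi_2^{x_0+x_2,R}\big\rangle\big|\le\|\phi_2\|_{L^2}\,R^{-d/2}\big\|T(\phi_1^{x_0+x_1,R})\big\|_{L^2},
\]
and since $|x_0+x_1|+R+R^{-1}\to\infty$ whenever $|x_0|+R+R^{-1}\to\infty$ ($x_1$ being fixed), condition (i) applied to $\phi_1$ furnishes the limit in \eqref{WCP1}. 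Moreover (i) implies Stein's estimate \eqref{S1} — the only delicate point being a uniform bound for $R^{-d/2}\|T(\phi^{x_0,R})\|_{L^2}$ over the compact parameter region $|x_0|+R+R^{-1}\lesssim1$, disposed of by the standard a priori reductions — so Theorem~\ref{THM:T1Stein} shows that $T$ extends to a bounded, hence Calder\'on--Zygmund, operator on $L^2$. In particular $T(1),T^*(1)$ are well defined in $\BMO$, $T$ and $T^*$ map $L^\infty$ boundedly into $\BMO$, and $T^*$ maps $H^1$ boundedly into $L^1$.

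The core is (B.ii), i.e.\ $T(1),T^*(1)\in\CMO$. I would first prove the auxiliary claim that $T$ and $T^*$ map $C_c^\infty$ into $\CMO$. Since $T\psi\in L^2\cap\BMO$ for $\psi\in C_c^\infty$, and $M_Q(h)\lesssim|Q|^{-1/2}\|h\|_{L^2(Q)}$ for $h\in L^2$, the quantities $\G_2(T\psi)$ and $\G_3^Q(T\psi)$ vanish automatically, so by Lemma~\ref{LEM:CMO} it remains to show $\G_1(T\psi)=0$. Fix a large constant $t$ (depending only on $d$ and $\dl$). For a cube $Q$ of side $\ell$ centered at $x_Q$, set $\lambda=t\ell$, take a cutoff $\chi\equiv1$ near $0$, and split
\[
\psi=\psi(x_Q)\,\chi^{x_Q,\lambda}+g+\psi\,(1-\chi^{x_Q,\lambda}),\qquad g:=\big(\psi-\psi(x_Q)\big)\chi^{x_Q,\lambda}.
\]
On $Q$ the last term has a $T$-image that is smooth (its argument vanishes near $Q$) with oscillation $\lesssim_\psi(\ell/\lambda)^{\dl}=t^{-\dl}$ by the kernel smoothness \eqref{CZ2}; the middle term obeys $\|g\|_\infty\lesssim_\psi\lambda$, so $M_Q(Tg)\le\ell^{-d/2}\|Tg\|_{L^2}\lesssim_\psi t^{1+d/2}\ell\to0$ as $\ell\to0$ by $L^2$-boundedness; and the first term obeys $M_Q\big(\psi(x_Q)T\chi^{x_Q,\lambda}\big)\lesssim_\psi t^{d/2}\cdot\lambda^{-d/2}\|T\chi^{x_Q,\lambda}\|_{L^2}$, which tends to $0$ as $\ell\to0$, for each fixed $t$ and uniformly in $x_Q$, by (i) applied to the single bump $\chi$ (here $\lambda=t\ell\to0$ activates the $\lambda^{-1}\to\infty$ regime). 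Letting $\ell\to0$ and then $t\to\infty$ gives $\G_1(T\psi)=0$, and the same for $T^*$. Now take $\eta\in C_c^\infty$ with $\eta\equiv1$ on $B_0(1)$ and $\eta_R=\eta(\cdot/R)$; then $\{T(\eta_R)\}$ and $\{T^*(\eta_R)\}$ lie in $\CMO$ (auxiliary claim) and are bounded there, with $\|T(\eta_R)\|_{\BMO}\le\|T\|_{L^\infty\to\BMO}$, while $\eta_R\to1$ weak-$\ast$ in $L^\infty$. Hence (ii) applies along any $R_n\to\infty$ and shows $\{T(\eta_{R_n})\}$, $\{T^*(\eta_{R_n})\}$ are precompact in $\CMO$. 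On the other hand, by the very definition of $T(1)$ together with the $\BMO$-boundedness just noted, $T(\eta_R)\to T(1)$ in the weak-$\ast$ topology of $\BMO=(H^1)^*$; passing to a $\CMO$-norm convergent subsequence and invoking uniqueness of weak-$\ast$ limits identifies the norm limit as $T(1)$, so $T(1)\in\CMO$, and likewise $T^*(1)\in\CMO$. Theorem~\ref{THM:MS} then gives that $T$ is compact on $L^2$.

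\emph{Main obstacle.} The essential difficulty is the auxiliary claim $T(C_c^\infty)\subset\CMO$, specifically the estimate $\G_1(T\psi)=0$: the smallness provided by (i) at small scales is only an $o(1)$ factor that must overcome the loss $(\lambda/\ell)^{d/2}$ incurred when the singular integral of a scale-$\lambda$ object is measured over a much smaller cube of side $\ell$, which is exactly why the auxiliary scale must be chosen as $\lambda=t\ell$ with $t$ a \emph{fixed} large constant rather than an $\ell$-dependent one, so that the loss becomes harmless. Secondary, more technical points are the clean derivation of $L^2$-boundedness from (i) over the compact parameter region, and the precise interpretation of $T(1)$ as the weak-$\ast$ limit of $T(\eta_R)$ in $\BMO$.
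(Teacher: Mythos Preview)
Your proposal is correct and follows essentially the same strategy as the paper: establish necessity from compactness and Remark~\ref{REM:CMO}\,(b), then for sufficiency verify (B.i) via Cauchy--Schwarz from the $L^2$-condition, show $T(\phi_R)\in\CMO$ for each $R$ by checking the $\G_1,\G_2,\G_3^Q$ criteria of Lemma~\ref{LEM:CMO} (using the same three-piece decomposition near a small cube), and finally invoke the $L^\infty$-condition on the sequence $\{T(\phi_{R_n})\}$ to conclude $T(1)\in\CMO$. The only cosmetic difference is in the $\G_1$ estimate, where you fix the ratio $\lambda=t\ell$ and take the two-step limit $\ell\to0$ then $t\to\infty$, while the paper selects an $r$-dependent scale $A_r r$ with $A_r\to\infty$ slowly so that all three error terms vanish in a single limit; these are equivalent bookkeeping devices, and your remark that $\G_2,\G_3^Q$ vanish for any $L^2$-function is a mild streamlining of the paper's Case~2.
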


Note that in the derivation of the $L^\infty$-condition and the $L^2$-condition before the statement of Theorem \ref{THM:1},
 we have already established their necessity,  assuming that $T$ is compact.
 Hence, it remains to prove  their sufficiency. Note also that the $L^\infty$-condition does not per se imply that $T$ is compact from  $L^\infty$ to $\CMO$.

 Our proof of Theorem \ref{THM:1} follows the lines  in the proof of \cite[Theorem 1]{BO2}
on the classical $T(1)$ theorem \`a la Stein
for  boundedness of linear singular integral operators.
The main task is to
verify the conditions (B.i) and (B.ii)
in Theorem \ref{THM:MS}
by assuming (i) and (ii) in Theorem \ref{THM:1}.
In particular,
for verifying the condition (B.ii),
we make use of the characterization
of $\CMO$ functions
given in
Lemma \ref{LEM:CMO}
and the condition (ii) in Theorem \ref{THM:1}.

\begin{remark}\rm
 It is not clear at this point  if the $L^\infty$-condition (ii) in Theorem \ref{THM:1}
can be removed.
See Remark \ref{REM:VMO} below.
\end{remark}

\begin{remark}\rm
In view of a compact bilinear $T(1)$ theorem in \cite{FraGreWic} and other properties of compact bilinear operators recently studied in \cite{BLOT},
it would be of interest to investigate an analogue of
Theorem \ref{THM:1} in the bilinear setting. We leave this to  interested readers.
\end{remark}

\subsection{An application of Theorem 1} \label{example} We end this section by illustrating the applicability of Theorem \ref{THM:1} to a simple example of compact \CZ operator. See also \cite{FraGreWic} for a similar example. For an application of the compact $T(1)$ theorem in establishing the compactness of the commutator of a certain class of pseudodifferential operators of order one and multiplication by a bounded Lipschitz function, see the recent work \cite{BOT}.

Following \cite{CST}, consider the class of pseudodifferential operators of order zero of the form\footnote{We are using the definition of the Fourier transform given by $\widehat f (\xi) = \int_{\mathbb R^d} f(x) e^{-ix\cdot\xi} dx$}
\begin{align*}
T_\sigma(f)(x)=\int_{\mathbb R^d}\sigma (x, \xi)\widehat f(\xi) e^{ix\cdot\xi} d\xi,
%\label{T1}
\end{align*}
where the symbol $\sigma$ satisfies
\begin{equation}
\label{VCM}
|\partial_x^\alpha\partial_\xi^\beta\sigma (x, \xi)|\leq K_{\alpha, \beta}(x, \xi)(1+|\xi|)^{-|\beta|}
\end{equation}
 for all multi-indices $\alpha$ and  $\beta$,  and where $K_{\alpha, \beta}(x, \xi)$ is a bounded function such that
 \begin{equation}
\label{VCM2}
 \displaystyle\lim_{|x|+|\xi|\to\infty}K_{\alpha, \beta}(x, \xi)=0.
 \end{equation}
The collection of pseudodifferential symbols $\sigma$ satisfying \eqref{VCM} with \eqref{VCM2} is a subclass of the classical H\"ormander class $S^0_{1,0}$, which is well-known to yield \CZ operators $T_\sigma$. To prove the compactness of these operators using any version of the compact $T(1)$ theorem immediately faces the nontrivial task of computing their transposes. While a symbolic calculus exits, as recently developed in \cite{BOT}, and in particular showing that this class of operators with symbols satisfying \eqref{VCM} with \eqref{VCM2} is closed under transposition, we will only consider one of the simpler examples for which the transpose of the operator is trivially computed. Although the compactness of the following example may be obtained in a variety of ways, the purpose here is to illustrate the utility of Theorem \ref{THM:1}.

 Let $\sigma_{a,b}(x,\xi) = (2\pi)^{-d}a(x) \widehat b(\xi)$, where $a, b \in C_c^\infty$ are supported on $B_0(1)$. Trivially, $\sigma_{a,b}(x,\xi)$ satisfies \eqref{VCM} with \eqref{VCM2}, and we can write
 $$
 T_{\sigma_{a,b}}(f) = a(b * f).
$$
It then easily follows that
$$
T_{\sigma_{a,b}}^*(f) = b * (a f).
 $$

 Let $\phi^{x_0,R}$ be a normalized bump function (of any order).  Because of the supports of the functions involved we have that
 $$
\| T_{\sigma_{a,b}}(\phi^{x_0,R}) \|_{L^2}= 0
$$
for $|x_0|\gg 1$. Also,
\begin{align*}
\|T_{\sigma_{a,b}}(R^{-\frac d2}\phi^{x_0, R})\|^2_{L^2} &\lesssim
    \int_{|x|\leq 1} |a(x)|^2  \left( \int_{\R^d} |b(x-z)| |\phi^{x_0,R}(z)| R^{-d/2}dz\right)^2 dx\\
    &\lesssim R^{-d} \| a\|^2_{L^2}  \| b\|^2_{L^1} \to 0
\end{align*}
as $R\to \infty$, while
\begin{align*}
    \|T_{\sigma_{a,b}}(R^{-\frac d2}\phi^{x_0, R})\|^2_{L^2} &\lesssim
    \int_{|x|\leq 1} |a(x)|^2  \left( \int_{|z|\leq 1} |b(x-Rz-x_0)| |\phi(z)| R^{d/2}dz\right)^2 dx\\
 &\lesssim R^{d} \| a\|^2_{L^2}  \| b\|^2_{L^\infty} \to 0
\end{align*}
as $R \to 0$. It follows that the $L^2$-condition is satisfied for $T_{\sigma_{a,b}}$.

 Regarding  the $L^\infty$-condition, we simply observe that if $\{f_j\}$ is a bounded sequence in $L^\infty$ with $\{T_{\sigma_{a, b}}(f_j)\}$ bounded in $\CMO$ then, again because of the condition on the supports of $a$ and $b$,
 $\{T_{\sigma_{a,b}}(f_j)\}$ is a sequence of $C^\infty$ functions supported on $B_0(1)$ which is equibounded and equicontinuous. By the Arzel\`a–Ascoli theorem, there exists a subsequence of $\{T_{\sigma_{a,b}}(f_j)\}$ which converges in $C_0$, and a fortiori in $\CMO$. The computations for $T_{\sigma_{a,b}}^*$ are similar. By Theorem \ref{THM:1}, $T_{\sigma_{a,b}}$ is compact. %Incidentally, for this simple example, the argument above in fact proves directly that $T$ is also compact from $L^\infty$ to $\CMO$.

\section{Proof  of Theorem \ref{THM:1}}
\label{SEC:4}

In this section, we present a proof of Theorem \ref{THM:1}.
As mentioned above,
it suffices to prove sufficiency of the conditions (i) and (ii).
In the following,
by assuming the conditions (i) and~(ii) in Theorem \ref{THM:1},
we verify
the conditions (B.i) and  (B.ii) in Theorem \ref{THM:MS},
which will in turn imply that $T$ is compact on $L^2$.

We first prove (B.i): the weak compactness property.
Let $\phi_1$ and $\phi_2$  be normalized bump functions   of order 0.
From \eqref{scaling}, we have
\begin{align}
\|R^{-\frac d2}\phi_1^{x_0 + x_1, R}\|_{L^2}
+
\|R^{-\frac d2}\phi_2^{x_0 + x_2, R}\|_{L^2}\les 1,
\label{X3}
\end{align}

\noi
uniformly in $x_0, x_1, x_2 \in \R^d$
and $R > 0$.
Then, from \eqref{R0}
and  \eqref{X3}, we have
\begin{align*}
& \lim_{|x_0|+ R + R^{-1} \to \infty}
R^{-d} \big| \big\langle T ( \phi_1^{x_0+ x_1, R}), \phi_2^{x_0+ x_2, R}\big\rangle \big|\\
& \hphantom{XXXX}
 \le
 \lim_{|x_0|+ R + R^{-1} \to \infty}
\min\Big(  \|T ( R^{-\frac d2} \phi_1^{x_0+ x_1, R})\|_{L^2}\|R^{-\frac d2}\phi_2^{x_0+ x_2, R}\|_{L^2},
\\
& \hphantom{lXXXXXXXX}
  \| R^{-\frac d2} \phi_1^{x_0+ x_1, R}\|_{L^2}\|T^*(R^{-\frac d2}\phi_2^{x_0+ x_2, R})\|_{L^2}\Big)\\
& \hphantom{XXXX}
 = 0
 \end{align*}

\noi
for any $x_1, x_2 \in \R^d$.
This proves
the condition (B.i) in Theorem \ref{THM:MS}.

Next, we prove the condition (B.ii) in Theorem \ref{THM:MS}.
In the following, we only show $T(1) \in \CMO$\,,
since  $ T^*(1) \in \CMO$
follows analogously by symmetry.
Since \eqref{R0} implies the bound~\eqref{S1},
it follows from Theorem \ref{THM:T1Stein}
that $T(1) \in \BMO$\,,
where  $T(1)$ is interpreted
as
\begin{align}
\jb{T(1), g} =
\lim_{R \to \infty}
\jb{T(\phi_R), g}
\label{X4}
\end{align}

\noi
for any $g \in (C^\infty_c)_0$.\footnote{
Recall that $(C^\infty_c)_0$ is dense in the Hardy space $H^1$,
which is a predual of $\BMO$\,; see \cite[p.\,372]{DJ}.
Namely, $T(1)$ is defined as the weak-$\ast$ limit (in $\BMO$\,)
of $T(\phi_R)$.
}
Here,
 $\phi$ is a test function in  $C^\infty_c$ with $0 \leq \phi \leq 1$
such that  $\phi (x) =  1$ for $|x| \leq \frac{1}{2}$ and $\supp  \phi \subset B_0(1)$,
and
\begin{align}
\phi_R(x) = \phi(R^{-1} x).
\label{phi1}
\end{align}

\noi
Moreover,
proceeding as in \cite[Section 4]{BO2}
with \eqref{S1}, we have
\begin{align}
\|T(\phi_R)\|_{\BMO} \les 1,
\label{X5}
\end{align}

\noi
uniformly in $R > 0$;
see \cite[(4.6)]{BO2} with $b_1 = 1$.

In the following, we show that $T(\phi_R)$ indeed belongs to \CMO
for each $R \gg1 $ by verifying~\eqref{R4} in Lemma \ref{LEM:CMO}.
Fix  $M \in \mathbb{N}$.
Then, by imposing that
\begin{align}
\|\dd^\al \phi\|_{L^\infty} \leq 1
\label{bump1}
\end{align}

\noi
for all multi-indices $\al$ with $|\al| \leq M$,
the function $\phi$ defined above is a normalized bump function of order $M$.

Fix a  cube $Q = Q(\l, x_0)$ of side length $\l  > 0$ with center $x_0 \in\R^d$.
Set
\begin{align}
r = 6 \diam(Q) = 6 \sqrt d \l .
\label{X6}
\end{align}

\medskip

\noi
$\bullet$ {\bf Case 1:} $r\to \infty$.
\\
\indent
By Cauchy-Schwarz's inequality with \eqref{R0} and \eqref{X6}, we have
\begin{align*}
\ave_Q |T(\phi_R)|
= \frac 1{|Q|} \int_Q |T(\phi_R)(x) |dx
\le \frac 1{|Q|^\frac 12 } \|T(\phi_R)\|_{L^2}
\les \frac{R^\frac d 2}{r^\frac d2}
\too 0,
\end{align*}

\noi
as $r \to \infty$,
from which we obtain
\begin{align*}
\frac 1{|Q|}
\int_Q |T(\phi_R) - \ave_Q T(\phi_R)| dx
\le 2 \ave_Q |T(\phi_R)| \too 0,
\end{align*}

\noi
as $r \to \infty$.
In view of \eqref{R2}, this shows
\begin{align}
\G_2(T(\phi_R))
= \lim_{r \to \infty}\bigg(
\sup_{|Q| \ge r}
 \frac{1}{|Q|} \int_{Q} |T(\phi_R)(x) - \ave_{Q} T(\phi_R)|dx
\bigg)
= 0.
\label{X7}
\end{align}

\medskip

\noi
$\bullet$ {\bf Case 2:} $|x_0| \to \infty$.
\\
\indent
Fix $R,  r > 0$.
With  $\phi_Q = \phi^{x_0, r}$ as in \eqref{scaling},
write
 $T( \phi_R)$ as
\begin{align}
 T(\phi_R) = T( \phi_Q\phi_R) + T\big( (1-\phi_Q)\phi_R\big)
 =: \1 + \II.
\label{L8}
 \end{align}

\noi
By choosing $|x_0| \gg R$,
we have $\phi_Q\phi_R = 0$
and thus $\1 = 0$.
Hence, we only need to estimate the second term $\II$
on the right-hand side of  \eqref{L8}.
From the support condition
\begin{align*}
\supp (1- \phi_Q) \subset \R^d \setminus B_{x_0}( 3  \diam(Q))
\subset \R^d \setminus Q
\end{align*}

\noi
and \eqref{CZ1},
we have
\begin{align}
 \II(x) = 	
\int_{\R^d}  K(x, y) \big(1- \phi_Q(y) \big)
\phi_R(y)
dy
\label{Y1}
\end{align}

\noi
for any $x\in Q$.
By taking an average over $Q$, we then have
\begin{align}
\ave_Q \II
= \frac{1}{|Q|} \int_Q
\int_{\R^d}  K(z, y) \big(1- \phi_Q(y) \big) \phi_R(y) dydz.
\label{Y2}
\end{align}

\noi
By taking the difference of \eqref{Y1} and \eqref{Y2}, we have
\begin{align*}
\II(x) - \ave_Q \II
= \frac{1}{|Q|} \int_Q
\int_{\R^d} \big(K(x, y) -  K(z, y)\big) \big(1- \phi_Q(y) \big) \phi_R(y) dydz
%\label{Y3}
\end{align*}

\noi
for any $x \in Q$.
In particular,
from \eqref{CZ2}, we have
\begin{align}
\begin{split}
& \big|\II(x) - \ave_Q \II\big|\\
& \quad \les \frac{1}{|Q|} \int_Q
\intt_{|x - z| \leq \diam(Q) <  \frac 12 |x - y|}
\frac{|x-z|^\dl}{|x-y|^{d+\dl}} \big|1- \phi_Q(y) \big| |\phi_R(y)| dydz,
\end{split}
\label{Y4}
\end{align}

\noi
uniformly in  $x \in Q$.

By considering the support of $\phi_R$,
we have $|y| \les R$ in \eqref{Y4}.
Thus, for $|x_0| \gg R + r$,
it follows from $|x - x_0|\le r$ that
$|x- y| \sim |x| \sim |x_0|$.
Then, by performing the $y$-integration in~\eqref{Y4} on $\supp (\phi_R)$,
we obtain
\begin{align*}
\big|\II(x) - \ave_Q \II\big|
\les
\frac{r^\dl R^d}{|x_0|^{d+\dl}} \too 0,
\end{align*}

\noi
as $|x_0| \to \infty$,
uniformly in  $x \in Q$.
Hence, together with $\1 = 0$ for $|x_0| \gg R$, we obtain
\begin{align}
\frac 1{|Q|}
\int_Q |T(\phi_R) - \ave_Q T(\phi_R)| dx
 \too 0,
 \label{Y4a}
\end{align}

\noi
as $|x_0| \to \infty$.

Let  $\wt Q = \wt Q (\l,  x_1)$  be a cube
of side length $\l  > 0$ with center $ x_1 \in\R^d$.
Then, given $\wt x_0 \in \R^d$,
we have
\[\wt Q  + \wt x_0 = Q (\l, x_0), \]

\noi
where
 $ Q(\l, x_0)$ denotes
the cube of  side length $\l  > 0$ with center $x_0 := x_1+ \wt x_0$.
For fixed $x_1 \in \R^d$, we have $|x_0| \to \infty$ as $|\wt x_0|\to \infty$.
Therefore,
from
 \eqref{R3} and \eqref{Y4a}, we obtain
\begin{align}
\G_3^{\wt Q}(T(\phi_R)) =
\limsup_{\substack{\wt x_0 \in \R^d\\|\wt x_0| \to \infty}}\bigg(
 \frac{1}{|\wt Q|} \int_{\wt Q+\wt x_0} |T(\phi_R)(x) - \ave_{\wt Q+\wt x_0} T(\phi_R)|dx
\bigg)
= 0
\label{X8}
\end{align}

\noi
for any cube $\wt Q \subset \R^d$.

\medskip

\noi
$\bullet$ {\bf Case 3:} $r\to 0$.
\\
\indent
Fix $R > 0$ and $x_0 \in \R^d$.
In view of
\eqref{R0},
we have
\begin{align*}
\lim_{r \to 0} r^{-\frac d2} \|T(\phi^{x_0, r})\|_{L^2} = 0.
\end{align*}

\noi
Then, by setting $A_r = \min \big(r^{\frac 12} \|T(\phi^{x_0, r})\|_{L^2} ^{-\frac 1 d }, r^{-\frac 1{d+2}} \big)$,
we have
\begin{align}
A_r & \too \infty,  \label{Z0a}\\
A_r^\frac{d+2}{2} r & \too 0,
\label{Z0b}
\end{align}

\noi
and
\begin{align}
A_r^\frac d2 r^{-\frac d2} \|T(\phi^{x_0, r})\|_{L^2} \too 0
\quad \text{or equivalently}
\quad
r^{-\frac d2} \|T(\phi^{x_0, A_r r})\|_{L^2} \too 0,
\label{Z1}
\end{align}

\noi
as $r \to 0$, where, in \eqref{Z1}, we used the fact that $A_r r \to 0$ as $r\to 0$.
In the following, we assume $r> 0$ is sufficiently small
such that  $A_r > 1$.

We only consider the case $x_0 \in \supp \phi_R$,
since the case $x_0 \notin \supp \phi_R$ can be treated
in an analogous but easier manner.
Define $\wt \phi_Q $
by
\begin{align}
\wt \phi_Q = \phi^{x_0, A_r r}.
\label{Y5a}
\end{align}

\noi
Then, we can
write $T( \phi_R)$ as
\begin{align}
\begin{split}
 T(\phi_R)
 & =
\phi_R(x_0) T(\wt  \phi_Q) +
 T\big(\wt  \phi_Q(-\phi_R(x_0)+\phi_R) \big)+ T\big( (1-\wt \phi_Q)\phi_R\big)\\
&  =: \wt \1 + \wt \II+ \wt \III.
\end{split}
\label{Y5}
 \end{align}

From
\eqref{Y5a} and \eqref{Z1}, we have
\begin{align}
\ave_Q|\wt  \1|  \le \frac{|\phi_R(x_0)|}{|Q|} \int_{Q} |T( \wt \phi_Q)| dx
\les   \frac 1{ r^{\frac d 2}} \big\|T\big(  \phi^{x_0, A_r r}\big)\big\|_{L^2}
\too 0,
\label{X9}
\end{align}

\noi
as $r\to 0$.

By the mean value theorem with \eqref{phi1}, \eqref{bump1}, and \eqref{Y5a}
(which implies $|x-x_0|\les A_r r$ on $\supp \wt \phi_Q$), we have
\begin{align}
|\wt \phi_Q(x) (-\phi_R(x_0) +\phi_R(x))|
\les R^{-1} |\wt \phi_Q(x)| |x-x_0|
\les R^{-1} A_r r |\wt \phi_Q(x)|.
\label{bump2}
\end{align}

\noi
Then, from
the boundedness of $T$ on $L^2(\R^d)$,
\eqref{bump2},
\eqref{Y5a}, and \eqref{Z0b}, we have
\begin{align}
\ave_Q|\wt  \II|
\le \frac{1}{|Q|^\frac 12} \| \wt \II\|_{L^2(Q)}
\les   \frac {A_r r } { R r^{\frac d 2}}
\|\wt \phi_Q\|_{L^2(\R^d)}
\sim    \frac {A_r^{\frac {d+2}2} r } { R}
\too 0,
\label{X10}
\end{align}

\noi
as $r\to 0$.

As for $\wt \III$,
proceeding as in Case 2, we have
\begin{align*}
\wt \III(x) - \ave_Q \wt \III
= \frac{1}{|Q|} \int_Q
\int_{\R^d} \big(K(x, y) -  K(z, y)\big) \big(1- \wt \phi_Q(y) \big) \phi_R(y) dydz
\end{align*}

\noi
for any $x \in Q$.
Then,
from \eqref{CZ2} with $A_r > 1$, we have
\begin{align*}
& \big|\wt \III(x) - \ave_Q \wt \III\big|\\
& \quad \les \frac{1}{|Q|} \int_Q
\intt_{|x - z| \leq \diam(Q) < \frac 1{2A_r} |x - y|}
\frac{|x-z|^\dl}{|x-y|^{d+\dl}} \big|1- \phi_Q(y) \big| |\phi_R(y)| dydz,
\end{align*}

\noi
uniformly in  $x \in Q$.
By integrating in $y$ and $z$ in the polar coordinates (centered at $x$
with $r_1 = |x- y| \ges A_r r $ and $r_2 = |x - z| \les r$)
with $|Q| \sim r^d$
and \eqref{Z0a},
 we have
\begin{align}
\begin{split}
 \big|\wt \III(x) - \ave_Q \wt \III\big|
 \les \frac{1}{r^d}
\int_0^{c_2 r}
\int_{c_1 A_r r}^\infty \frac{1}{r_1^{1+\dl}} dr_1
r_2^{d - 1 + \dl }
dr_2
\sim \frac{1}{A_r^\dl}\too 0,
\end{split}
\label{X11}
\end{align}

\noi
as $r\to 0$,
uniformly in  $x \in Q$.

Hence,  putting \eqref{Y5}, \eqref{X9},
\eqref{X10}, and \eqref{X11} together,
\begin{align}
\frac 1{|Q|}
\int_Q |T(\phi_R) - \ave_Q T(\phi_R)| dx
 \too 0,
 \label{X11a}
\end{align}

\noi
as $r \to0$,
 uniformly in $R \gg 1$.
From the definition \eqref{R1} of $\G_1$, this shows
\begin{align}
\G_1(T(\phi_R))
=
\lim_{r \to 0}\bigg(
\sup_{\substack{|Q|  \le r}}
 \frac{1}{|Q|} \int_{Q} |T(\phi_R)(x) - \ave_{Q} T(\phi_R)|dx
\bigg)
= 0.
\label{X12}
\end{align}

Therefore,
from Lemma \ref{LEM:CMO} with
\eqref{X7},
\eqref{X8},
and
\eqref{X12},
we conclude that
\begin{align}
T(\phi_R) \in \CMO
\label{X13}
\end{align}

\noi
for each $R>0$.

Let $\{R_j\}_{j \in \N}$ be an increasing sequence
of positive numbers such that $R_j \to \infty$ as $j \to \infty$.
Then, the sequence
$\{\phi_{R_j}\}_{j \in \N}$
converges to the constant function $\phi(0) = 1$
in the weak-$\ast$ topology of $L^\infty$.
Moreover,
from~\eqref{X5} and \eqref{X13},
the sequence
$\{T(\phi_{R_j})\}_{j \in \N}$  is bounded in $\CMO$\,.
Therefore,
from the $L^\infty$-condition~(ii),
we conclude that
there exists a subsequence
$\{T(\phi_{R_{j_k}})\}_{k \in \N}$
converging strongly to $T(1)$ in $\CMO$\,,
which in particular implies
$T(1) \in \CMO$\,.

We have now verified (B.i) and (B.ii)
in Theorem \ref{THM:MS},
which in turn implies that $T$ is compact on $L^2$.
This concludes the proof of Theorem \ref{THM:1}.

\begin{remark}\label{REM:VMO}
\rm
In view of  \eqref{X11a},
the convergence in \eqref{X12}
is uniform in $R \gg 1$.
However,
the convergence in \eqref{X7}
and
\eqref{X8} in Cases 1 and 2, respectively,
is not uniform in $R \gg 1$.
This is the reason that we needed to introduce
the $L^\infty$-condition (ii) in Theorem \ref{THM:1}.
\end{remark}

\smallskip

\begin{remark}\label{REM:VMOii}
\rm
In view of $(\CMO\,)^* = H^1$
(see  \cite[Theorem (4.1) on p.\,638]{CW}\footnote{We note that the space
$\textit{VMO}$ in \cite{CW} corresponds to $\CMO$ in the current work.})
and that $T(\phi_{R}) \in \CMO$\,, $R>0$,
it follows from \eqref{X4}
(which holds for any $g \in H^1$)
that $T(1)$ is a weak limit of $\CMO$ functions
with respect to the weak topology on $\CMO$\,.
Hence, if $\CMO$ {\it were} weakly sequentially complete,
we would be able to conclude $T(1) \in \CMO$\, without
the $L^\infty$-condition~(ii) in Theorem~\ref{THM:1}.
However, this is not the case.
See Appendix \ref{SEC:weak}.

\end{remark}

\section{Mapping properties of Calder\'on-Zygmund operators into \CMO}
\label{SEC:5}

While every  Calder\'on-Zygmund operator maps $L^\infty$ to $\BMO$,
 compact ones in particular map
$L^\infty$ to $\CMO$
as pointed out in Remark \ref{REM:CMO}\,(b).
 We will show that, in general, without the compactness assumption, \CZ operators fail to have this property.
Moreover, this failure persists even if we restrict input functions to
smaller  subspaces of $L^\infty$,
consisting of smooth functions; see Proposition~\ref{no}.

In the case of \CZ operators of convolution type,
we do have that they map $C_0$ into $\CMO\,$. This claim appears in \cite[p.\,180]{Stein}\footnote{The space $\VMO$ in \cite{Stein}
also corresponds to $\CMO$ in our discussion.} without a proof.
In  Proposition~\ref{yes},
we prove this  claim in a little more generality.
We note that, in
 \cite[p.\,207]{G2} and \cite[p.\,10]{MitSto}, analogous claims are made
for general \CZ operators (in particular for those of non-convolution type),
which are unfortunately  incorrect.
In   Proposition~\ref{no},
we construct an example
of a \CZ operator $T$ such that  $T(C^\infty_c) \not\subset \CMO$\,.

We first recall some further  properties of \CZ operators.
Given  a \CZ operator $T$, we have
$T(f) \in L^1$
for  $f \in (C^\infty_c)_0$.
Here,   $(C^\infty_c)_0$
denotes
the subspace of $C^\infty_c$  consisting of mean-zero functions,
which is  dense in  $H^1$.
If, in addition, $T$ maps $H^1$ into itself,
 then
 we have
\[
0= \int_{\R^d} T(f)(x)\, dx  =
\jb{1, T(f) } = \jb{ T^*(1), f}
\]

\noi
for any  $f \in (C^\infty_c)_0$.
Namely, we have
$T^*(1)=0$.
In fact, such a condition is sufficient
for  boundedness of $T$ on  $H^1$.

\begin{lemma}\label{h1-to-h1}
Let $T$ be a \CZ operator of convolution type\footnote{Namely,  the kernel $K$ of the operator $T$ is of the form $K(x,y)=k(x-y)$ such that  $Tf=k * f$, at least in the distributional sense.
Note that, for a \CZ operator $T$ of convolution type,  we have $T(1) = T^*(1) = 0$ (as elements in $\BMO$\,, namely, modulo additive constants).} or, more generally,
let $T$
be  a \CZ operator with  $T^*(1)=0$.
Then,  $T$ maps $ H^1$ into itself.
\end{lemma}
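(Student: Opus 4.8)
The plan is to obtain Lemma~\ref{h1-to-h1} from the classical \emph{atom-to-molecule} mechanism for singular integrals. Since a \CZ operator of convolution type automatically satisfies $T(1)=T^*(1)=0$ (as noted in the footnote, via the Fourier multiplier representation of $T$), it suffices to treat a general \CZ operator $T$ with $T^*(1)=0$. Recall that $T$ is bounded on $L^2$ and of weak type $(1,1)$; using this together with the atomic decomposition of $H^1$, the claim reduces to a uniform bound $\|T(a)\|_{H^1}\les 1$ over $H^1$-atoms $a$. Indeed, if $f\in H^1$ and $f=\sum_j\lambda_j a_j$ with $\sum_j|\lambda_j|\les\|f\|_{H^1}$, then the uniform bound makes the partial sums of $\sum_j\lambda_j T(a_j)$ Cauchy in $H^1$, while they converge to $T(f)$ in $L^{1,\infty}$ by the weak $(1,1)$ estimate; hence $T(f)\in H^1$ with $\|T(f)\|_{H^1}\les\|f\|_{H^1}$. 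By mollifying an atom (and invoking the uniform tail bound proved below) one may further assume $a\in(C^\infty_c)_0$.

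Next I would establish the two molecular estimates for such an atom $a$ with $\supp a\subset Q=Q(x_0,\l)$, $\|a\|_{L^\infty}\le|Q|^{-1}$, and $\int a=0$. The $L^2$-size bound is immediate from $L^2$-boundedness: $\|T(a)\|_{L^2}\le\|T\|_{L^2\to L^2}\|a\|_{L^2}\les\l^{-d/2}$. For the off-diagonal decay, write $T(a)(x)=\int_Q\big(K(x,y)-K(x,x_0)\big)a(y)\,dy$ using $\int a=0$; when $|x-x_0|\ge 2\sqrt d\,\l$ one has $|y-x_0|\le\tfrac12|x-y|$ on $\supp a$, so the smoothness condition \eqref{CZ2} and $\|a\|_{L^1}\le 1$ give $|T(a)(x)|\les\l^{\dl}|x-x_0|^{-d-\dl}$ there. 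Splitting $\int_{\R^d}|x-x_0|^{d+2\eps}|T(a)(x)|^2\,dx$ over $2\sqrt d\,Q$ and its complement and picking any $\eps\in(0,\dl)$ then yields $\big\||\cdot-x_0|^{d/2+\eps}T(a)\big\|_{L^2}\les\l^{\eps}$. Together with the cancellation $\int T(a)=0$ (proved next), these estimates say precisely that $T(a)$ is a uniformly bounded multiple of a molecule centered at $x_0$, and the molecular characterization of $H^1$ (see, e.g., \cite{CW}) gives $\|T(a)\|_{H^1}\les 1$.

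It remains to check $\int_{\R^d}T(a)\,dx=0$, and this is the only place where $T^*(1)=0$ is used. Since $T(a)\in L^1$ by the two bounds above, and $a\in(C^\infty_c)_0\subset\S$, I would test $T(a)$ against the functions $\phi_R$ from \eqref{phi1}: by dominated convergence $\jb{\phi_R,T(a)}\to\int_{\R^d}T(a)\,dx$ as $R\to\infty$; for each fixed $R$ one has $\jb{\phi_R,T(a)}=\jb{T^*(\phi_R),a}$ by definition of the transpose (legitimate since $\phi_R,a\in\S$); and, since $a\in(C^\infty_c)_0$, the definition \eqref{X4} of $T^*(1)$ together with the hypothesis $T^*(1)=0$ yields $\jb{T^*(\phi_R),a}\to\jb{T^*(1),a}=0$. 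Hence $\int T(a)\,dx=0$. For the convolution case this is of course direct: $\widehat{T(a)}$ is continuous and $\widehat{T(a)}(\xi)\to 0$ as $\xi\to 0$ because $\widehat a(0)=\int a=0$ and the multiplier is bounded.

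The step I expect to be the main obstacle is precisely this cancellation: $T^*(1)$ is only available as a weak-$\ast$ limit in $\BMO$, so one must justify interchanging $R\to\infty$ with the pairing against $T(a)$ — which is exactly why the integrability $T(a)\in L^1$, itself a consequence of the size and smoothness conditions \eqref{CZ1a}--\eqref{CZ2}, is needed. The molecular estimates themselves, and the reduction to atoms, are routine.
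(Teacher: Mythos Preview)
Your argument is correct: the atom--to--molecule mechanism you outline (size $\|T(a)\|_{L^2}\les\l^{-d/2}$, decay $|T(a)(x)|\les\l^{\dl}|x-x_0|^{-d-\dl}$ from \eqref{CZ2} and $\int a=0$, and cancellation $\int T(a)=0$ from $T^*(1)=0$ via \eqref{X4}) is exactly what is needed, and your handling of the passage from atoms back to general $f\in H^1$ via the weak $(1,1)$ bound is the right way to avoid the well-known pitfall that a uniform bound on atoms alone need not give an $H^1$ bound.

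The paper, however, does not prove Lemma~\ref{h1-to-h1} at all: it simply records the result and attributes the convolution case to Fefferman--Stein \cite{FS}, the general case $T^*(1)=0$ to Alvarez--Milman \cite{AM}, and the symmetric case $T(1)=T^*(1)=0$ to Frazier et al.\ \cite{FHJW,FTW}. Your proof is essentially a streamlined version of the Alvarez--Milman argument, so you have supplied what the paper only cites. One very minor remark: the mollification reduction to $a\in(C_c^\infty)_0$ is not really needed for the cancellation step, since $T^*(\phi_R)\in L^2$ pairs against any $L^2$ atom and the convergence $\jb{T^*(\phi_R),a}\to\jb{T^*(1),a}$ extends from $(C_c^\infty)_0$ to all of $H^1$ by the uniform $\BMO$ bound \eqref{X5} (applied to $T^*$) together with density; but your route through mollification is also fine.
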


In the convolution case,  this result goes back to Fefferman and Stein \cite[Corollary~1]{FS}.
For non-convolution operators with  $T^*(1)=0$, this result was proved by Alvarez and Milman \cite[Theorem~1.1]{AM}.
Under the additional assumption that  $T(1)=0$,
this result
%Lemma \ref{h1-to-h1}
was shown by
 Frazier et al. \cite{FHJW,  FTW}.
  This case subsumes the Fefferman-Stein result
  on \CZ operators of convolution type; see, for example, the comments in \cite[p.\,67]{FTW}.
  Note that if $T(1)=T^*(1)=0$, then
  we have  both $T,\, T^*: H^1 \to H^1$ and $\BMO \to \BMO$\,.
The next two results  show that
the cancellation condition $T(1)=T^*(1)=0$ also provides  additional mapping properties.

\begin{lemma}\label{smooth-to-c0}
Let $T$ be a \CZ operator of convolution type or, more generally,
let $T$
be  a \CZ operator with   $T(1)=T^*(1)=0$.
Then,  for any $f \in C^\infty_c$,  we have
$T(f) \in C_0$.
\end{lemma}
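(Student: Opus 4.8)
The statement to prove is: if $T$ is a \CZ operator with $T(1) = T^*(1) = 0$, then $T(f) \in C_0$ for any $f \in C_c^\infty$. The plan is to split $T(f)$ into a local piece near the support of $f$ and a tail piece, and handle each separately: continuity of $T(f)$ will come mostly from the kernel regularity estimate \eqref{CZ2} together with boundedness on $L^2$, while decay at infinity will use the cancellation $T(1) = 0$.

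First I would prove continuity of $T(f)$. Away from $\supp f$, $T(f)(x) = \int K(x,y) f(y)\, dy$ is continuous (indeed smooth) by differentiating under the integral, since $K$ is locally integrable off the diagonal and smooth enough by \eqref{CZ2} to control difference quotients uniformly on compact sets disjoint from $\supp f$. Near a point $x_0 \in \supp f$ one argues by a standard splitting: write $f = f(x_0)\psi + (f - f(x_0)\psi)$ where $\psi \in C_c^\infty$ equals $1$ on a neighborhood of $\supp f$; then $T(f(x_0)\psi)(x) = f(x_0) T(\psi)(x)$, and $g := f - f(x_0)\psi$ vanishes at $x_0$, so one can quantify $|g(y)| \lesssim |y - x_0|$ and show $T(g)$ is continuous at $x_0$ by breaking the integral into $|y - x_0| < \varepsilon$ (small by the vanishing of $g$ and weak-type/$L^2$ bounds) and $|y - x_0| \geq \varepsilon$ (continuous by kernel regularity). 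The upshot is that $T(f)$ is continuous everywhere; this part uses only that $T$ is a \CZ operator, not the cancellation condition.

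The decay at infinity is where the hypothesis $T(1) = 0$ enters, and this is the main obstacle. Fix $f \in C_c^\infty$ with $\supp f \subset B_0(\rho)$ and $\int f = m$. If $m = 0$, then for $|x|$ large the cancellation of $f$ lets us replace $K(x,y)$ by $K(x,y) - K(x,0)$ inside $T(f)(x) = \int K(x,y) f(y)\, dy$, and \eqref{CZ2} gives $|T(f)(x)| \lesssim \int \frac{|y|^\delta}{|x|^{d+\delta}} |f(y)|\, dy \lesssim |x|^{-d-\delta} \to 0$. If $m \neq 0$, subtract off a fixed mean-zero corrector: pick $h \in (C_c^\infty)_0$... no — more carefully, pick $\eta \in C_c^\infty$ with $\int \eta = 1$ and write $f = (f - m\eta) + m\eta$, so $f - m\eta \in (C_c^\infty)_0$ and the previous case handles $T(f - m\eta) \in C_0$; it then remains to show $T(\eta) \in C_0$ for a single fixed bump $\eta$ with $\int \eta = 1$, and here one invokes the hypothesis $T(1) = 0$ directly. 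Since $1 = \eta + (1 - \eta)$ with $1 - \eta$ equal to $1$ outside a compact set, one can interpret the defining relation $T(1) = 0$ (in the \BMO, i.e. modulo constants, sense — but here we want the genuine $C_0$ statement) to conclude that $T(\eta)$ agrees modulo a constant with $-T(1-\eta)$, and $T(1-\eta)(x) = \int (K(x,y) - K(x,0))(1-\eta(y))\, dy + (\text{const})$ decays like $|x|^{-\delta}$ away from the support of $\eta$ by \eqref{CZ2}; matching the constants forces the genuine decay. The delicate point — the true obstacle — is making this last step rigorous: $T(1) = 0$ is a priori only a statement modulo additive constants, so one must argue that for the \emph{specific} representative $T(\eta)$ (which is an honest $L^1_{\mathrm{loc}}$ function, continuous by Step 1) the additive constant is pinned down correctly, using $T^*(1) = 0$ to control the mean / pairing against $H^1$ atoms, or equivalently using that $T$ maps $H^1 \to H^1$ (Lemma \ref{h1-to-h1}) so that $T$ of an atom is a genuine $L^1$ function with the right integrability. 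I expect this reconciliation of the ``modulo constants'' ambiguity with the pointwise $C_0$ conclusion to be the crux, and I would organize the write-up so that the kernel estimates do the easy decay and Lemma \ref{h1-to-h1} (or a direct argument with the definition \eqref{X4}) does the bookkeeping on constants.
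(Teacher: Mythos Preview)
You have the roles of the two hypotheses reversed, and this creates a genuine gap. The decay at infinity is the \emph{easy} part and needs no cancellation whatsoever: for $|x|$ large enough that $x\notin\supp f$, the kernel representation \eqref{CZ1} is valid and the size condition \eqref{CZ1a} alone gives
\[
|T(f)(x)|=\Big|\int_{\supp f}K(x,y)f(y)\,dy\Big|\lesssim \frac{\|f\|_{L^1}}{|x|^{d}}\longrightarrow 0,
\]
with no need to split into mean-zero and non-mean-zero pieces or to invoke $T(1)=0$. The paper does exactly this in two lines.

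The hard part is continuity of $T(f)$ on $\supp f$, and here your argument is circular. You decompose $f=f(x_0)\psi+g$ with $\psi\equiv 1$ near $\supp f$, but then you need $T(\psi)$ continuous at $x_0\in\supp\psi$, which is the same problem you started with. In fact continuity \emph{fails} for general \CZ operators: Proposition~\ref{no} of this very paper builds a paraproduct $\Pi_b$ (a bona fide \CZ operator) and a bump $\phi_R\in C_c^\infty$ such that $\Pi_b(\phi_R)$ has a jump discontinuity at the origin, so $\Pi_b(\phi_R)\notin\VMO$, let alone $C_0$. Thus the cancellation condition $T(1)=T^*(1)=0$ is essential for continuity, not for decay. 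The paper obtains continuity by quoting Meyer's result \cite[Proposition~2]{M} that a \CZ operator with $T(1)=T^*(1)=0$ preserves H\"older classes; this is where the full strength of the hypothesis enters.
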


\begin{proof}
We note that this is very easy to prove
 in the convolution case.
Since $T$ is bounded on $L^2$, there exists  $m\in L^\infty$ such that $\widehat {T(f)} = m \ft  f$.
Since $f \in C^\infty_c$, we have $\widehat {f} \in \S$ and thus  $m \ft f\in L^1$.
Then, by the Riemann-Lebesgue lemma, we obtain
$T(f) \in C_0$.

In the more general case, we cannot take such direct advantage of the Fourier transform.
Using certain molecular decompositions,
Meyer \cite[Proposition 2]{M} showed that, under the hypothesis of the lemma, $T$ maps H\"older continuous functions into H\"older continuous functions.
In particular, for $f \in C^\infty_c$,   $ T(f)$ is continuous.

Suppose that $\supp f \subset B_0(R) \subset \R^d$.
Then, if $|x| \ge 2R$,  we have $|x- y| \sim |x|$ for any $y \in \supp f$.
Hence, from \eqref{CZ1a}, we have
\begin{equation}\label{vanishes}
| T(f)(x)| = \left| \int_{\supp f} K(x,y) f(y)\,dy\right| \lesssim   \int_{\supp f} \frac{1} {|x|^d} |f(y)|\,dy \lesssim \frac{\| f\|_{L^1}} {|x|^d} \too 0
\end{equation}

\noi
as $|x| \to \infty$.
Therefore, we conclude that
$T(f) \in C_0$.
\end{proof}

As stated
in
Remark~\ref{REM:CMO}\,(e) and (f),
under the cancellation condition $T(1)=T^*(1)=0$,
a compact \CZ operator $T$ is compact
from \BMO to \CMO
and
also from $H^1$ into itself,
where the latter claim follows
from the former via duality.
In fact,
a compact \CZ operator $T$ is compact  from $\CMO$ to itself
and thus  from Lemma~\ref{LEM:com}\,(ii),
we see that
$T$ is compact from $H^1$ into itself.
Nonetheless, %in the next proposition,
using the characterization~\eqref{H1}
of the Hardy space $H^1$,
we present a direct proof of
the compactness on $H^1$
of a compact \CZ operator $T$
under the cancellation condition $T(1)=T^*(1)=0$
without using such a duality argument.

\begin{proposition}\label{PROP:cpt}
Let $T$
be  a \CZ operator
   of convolution type or, more generally,
let $T$
be
  a \CZ operator with $T(1) = T^*(1) = 0$.
  Suppose that $T$ is compact on $L^2$.
  Then, $T$ is compact from  $H^1$ to  itself.
\end{proposition}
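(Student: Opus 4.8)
The plan is to exploit the characterization \eqref{H1} of $H^1$ in terms of the Riesz transforms, reducing the compactness of $T$ on $H^1$ to the (already known) compactness of $T$ and related operators on $L^1$-type spaces. First I would recall that since $T(1) = T^*(1) = 0$, Lemma~\ref{h1-to-h1} tells us $T$ maps $H^1$ into itself, so the statement makes sense, and by Remark~\ref{REM:CMO}\,(c) the compact \CZ operator $T$ is compact from $H^1$ to $L^1$. The key algebraic observation is that the Riesz transforms $R_j$ are themselves \CZ operators of convolution type, hence commute with each other and with any convolution operator; more to the point, one has the classical identity $\sum_{j=1}^d R_j^2 = -\Id$ on $L^2$, so for $f \in H^1$ one can write $R_j f = -\sum_{k=1}^d R_k(R_k R_j f)$ and, more usefully, control $\|R_j (Tf)\|_{L^1}$ by understanding the operators $R_j T$.

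The heart of the argument is to show that each operator $R_j T$ is compact from $H^1$ to $L^1$. For a convolution operator $T$ this is immediate since $R_j T = T R_j$ and $R_j$ maps $H^1$ boundedly into $H^1$ (indeed into $L^1$), so $R_j T = T R_j$ is a composition of a bounded operator $H^1 \to H^1$ with the compact operator $T : H^1 \to L^1$ from Remark~\ref{REM:CMO}\,(c). In the general non-convolution case with $T(1) = T^*(1) = 0$ one does not have $R_j T = T R_j$, but one can instead argue that $R_j T$ is again a compact \CZ operator on $L^2$: it is bounded on $L^2$ (composition of bounded operators, one of them compact, so the composition is compact on $L^2$), its kernel is a \CZ kernel (composition of \CZ kernels is Calder\'on--Zygmund — this is standard, using that $R_j$ has a kernel with the requisite decay and smoothness), and its transpose is $T^* R_j^* = -T^* R_j$, with $(R_j T)(1) $ and $(R_j T)^*(1)$ computable; since $T^*(1) = 0$ one gets $(R_j T)^*(1) = -T^*(R_j(1)) $ which vanishes because $R_j(1) = 0$ as an element of $\BMO$. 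Hence by Remark~\ref{REM:CMO}\,(c) applied to the compact \CZ operator $R_j T$ (whose compactness on $L^2$ follows from Lemma~\ref{LEM:com} since $T$ is compact on $L^2$), we get $R_j T : H^1 \to L^1$ compact.

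Granting this, the proof concludes quickly. Let $\{f_n\} \subset H^1$ be bounded. By compactness of $T : H^1 \to L^1$ (Remark~\ref{REM:CMO}\,(c)), pass to a subsequence along which $\{T f_n\}$ converges in $L^1$. By compactness of $R_j T : H^1 \to L^1$, successively refine the subsequence, $j = 1, \dots, d$, so that $\{R_j (T f_n)\} = \{(R_j T) f_n\}$ converges in $L^1$ for every $j$. Along this final subsequence, $\{T f_n\}$ is Cauchy in the $H^1$-norm \eqref{H1}, since $\|g\|_{H^1} = \|g\|_{L^1} + \sum_j \|R_j g\|_{L^1}$; as $H^1$ is complete, $\{T f_n\}$ converges in $H^1$. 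This shows $T : H^1 \to H^1$ is compact.

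The main obstacle I expect is the non-convolution case: verifying carefully that $R_j T$ is a \CZ operator with $(R_j T)^*(1) = 0$ (so that Remark~\ref{REM:CMO}\,(c), which presupposes compactness, applies — and compactness on $L^2$ of $R_j T$ is automatic from $T$ compact and $R_j$ bounded). One must check the composition of \CZ kernels is \CZ (the off-diagonal kernel of $R_j T$ is $\int R_j\text{-kernel}(x,z) K(z,y)\,dz$, requiring a standard but slightly technical estimate splitting the $z$-integral near $x$, near $y$, and away from both), and one must justify the identity $(R_j T)^*(1) = -T^*(R_j 1) = 0$ at the level of the weak-$*$ definition \eqref{X4} of the action on $1$, using $R_j 1 = 0$ in $\BMO$ and $T^*(1)=0$. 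Everything else — the Riesz-transform characterization, the iterated extraction of subsequences, and completeness of $H^1$ — is routine.
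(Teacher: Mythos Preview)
Your proposal is correct and follows essentially the same route as the paper: reduce to showing each $R_j T$ is compact from $H^1$ to $L^1$ via Remark~\ref{REM:CMO}\,(c), then invoke the Riesz-transform characterization \eqref{H1} of the $H^1$-norm. The paper dispatches your ``main obstacle'' by citing \cite[Section~8.9]{MC} for the fact that \CZ operators with $S(1)=S^*(1)=0$ form an algebra (so $R_jT$ is automatically a \CZ operator, without verifying kernel estimates by hand), and note that Remark~\ref{REM:CMO}\,(c) requires no cancellation hypothesis on $R_jT$ --- only that it be a compact \CZ operator --- so your computation of $(R_jT)^*(1)$ is superfluous.
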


\begin{proof}

Let $T$ be a compact \CZ operator with $T(1) = T^*(1) = 0$.
By Remark~\ref{REM:CMO}\,(c),
 we know that $T :H^1 \to L^1$ is compact.
On the other hand,
recalling from
 \cite[Section 8.9]{MC}
 that the collection of \CZ operators $S$
 satisfying the cancellation condition
 $S(1) = S^*(1) = 0$
 forms an algebra,
 we see that
  $R_jT$ is also a Calder\'on-Zygmund operator, $j = 1, \dots, d$
  (since $R_j(1) = R_j^*(1) = 0$ as a convolution operator).
 Moreover, since $T$ is compact on $L^2$ and $R_j$ is bounded on $L^2$,
 the composition $R_jT$  is compact  on $L^2$.
 Therefore,
 the desired compactness on $H^1$ now  follows  from the definition \eqref{H1} of the $H^1$-norm:
\[
\|Tf||_{H^1} = \|Tf\|_{L^1} + \sum_{j=1}^{d} \| (R_jT)f\|_{L^1},
\]

\noi
since each operator appearing on the right-hand side is compact from $H^1$ into $L^1$.
\end{proof}

We now state the first main result of this section
which extends
the aforementioned claim in \cite[p.\,180]{Stein}
to a slightly general setting.

\begin{proposition} \label{yes}
Let $T$ be a \CZ operator of convolution type or, more generally,
let $T$
be  a \CZ operator with   $T(1)=T^*(1)=0$.
Then,  $T$ maps $C_0$ to $\CMO$\,.
\end{proposition}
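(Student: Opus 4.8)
The plan is to reduce the claim to test functions by a density argument and then use Lemma~\ref{smooth-to-c0} together with the elementary inclusion $C_0 \subset \CMO$. First I would observe that $C_0 \subset \CMO$: for $g \in C_0$ there is a sequence $\phi_n \in C_c^\infty$ with $\phi_n \to g$ uniformly (truncate $g$ by a smooth cutoff and mollify), and since $\|h\|_{\BMO} \le 2\|h\|_{L^\infty}$ for every $h \in L^\infty$, we get $\|g - \phi_n\|_{\BMO} \to 0$, so $g$ lies in the $\BMO$-closure of $C_c^\infty$, which is $\CMO$ by definition. (Alternatively, one checks $\G_1(g) = \G_2(g) = \G_3^Q(g) = 0$ via Lemma~\ref{LEM:CMO}, using uniform continuity of $g$ for $\G_1$ and the decay of $g$ at infinity for $\G_2$ and $\G_3^Q$.)

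Next, recall that a \CZ operator extends to a bounded operator $T: L^\infty \to \BMO$, and that $C_0 \subset L^\infty$ with $\|g\|_{L^\infty} = \sup_x |g(x)|$ for $g \in C_0$; hence $T$ restricts to a continuous map from $(C_0, \|\cdot\|_{L^\infty})$ into $\BMO$, and the set $\{ f \in C_0 : T(f) \in \CMO \}$ is closed in $C_0$, being the preimage of the closed subspace $\CMO \subset \BMO$. Since $C_c^\infty$ is dense in $C_0$ for the uniform norm, it therefore suffices to prove $T(C_c^\infty) \subset \CMO$. But for $f \in C_c^\infty$, the hypothesis ($T$ of convolution type, or $T(1) = T^*(1) = 0$) lets us apply Lemma~\ref{smooth-to-c0} to conclude $T(f) \in C_0 \subset \CMO$, which finishes the proof.

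The argument is short, so the main obstacle is not in this proposition itself but in its single substantive input, Lemma~\ref{smooth-to-c0}: in the convolution case this is immediate from the Riemann-Lebesgue lemma, while in the general case $T(1) = T^*(1) = 0$ it rests on Meyer's theorem that such operators map H\"older continuous functions to H\"older continuous functions. A secondary point to be careful about is that the phrase ``$T$ maps $C_0$ to $\CMO$'' should be read with $T$ the canonical $L^\infty \to \BMO$ extension and with elements of $\BMO$ and $\CMO$ taken modulo additive constants, so that the statement is unambiguous.
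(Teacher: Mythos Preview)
Your proof is correct and follows the same density-and-closure skeleton as the paper's, but with one streamlining worth noting. To pass from $T(f_n)\to T(f)$ in $\BMO$, the paper invokes the $H^1$--$\BMO$ duality together with Lemma~\ref{h1-to-h1} (i.e., boundedness of $T^*$ on $H^1$, which uses the hypothesis $T(1)=0$), whereas you simply appeal to the standard $L^\infty\to\BMO$ boundedness of a \CZ operator. Your route is more direct and shows that the cancellation hypotheses are needed only at the single point where Lemma~\ref{smooth-to-c0} is invoked; the paper's detour through Lemma~\ref{h1-to-h1} is not actually required for this proposition. Both arguments finish the same way, via $T(f_n)\in C_0\subset\CMO$ and the closedness of $\CMO$ in $\BMO$.
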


\begin{proof}
Let $f \in C_0$.
Then, there exists  $\{f_n\}_{n\in \N} \subset C^\infty_c$ such that
$f_n$ converges to  $f$ in $L^\infty$ (and thus in $\BMO$\,) as $n \to \infty$.
 By the boundedness of $T:L^\infty \to BMO$, we  have $T(f), \, T(f_n) \in BMO$
 for any $n \in \N$.
Then, from the definition of the transpose and   Lemma~\ref{h1-to-h1}, we have
\begin{align}
\begin{split}
|\jb{T(f)-T(f_n), g}|
& = |\jb{ f-f_n, T^*(g)}| \leq \| f-f_n\|_{\BMO} \|T^*(g)\|_{H^1} \\
& \lesssim \| f-f_n\|_{\BMO} \|g\|_{H^1}
\end{split}
\label{X1}
\end{align}

\noi
for any $g\in H^1$.
Recalling %Since $H^1$ is {\it norming} in $\BMO$\,, that is
$$
\|b\|_{\BMO} = \sup_{\|g\|_{H^1}=1} |\langle b, g\rangle|,
$$

\noi
we see from \eqref{X1} that
 $T(f_n)$ converges to $T(f)$ in $\BMO$\,
 as $n \to \infty$.
 On the other hand,
 it follows from
 Lemma~\ref{smooth-to-c0} that  $T(f_n) \in C_0$ for each $n \in \N$.
Therefore,
recalling from
 \cite[Th\'eor\`eme~7]{Bour}
that the closure of $C_0$ in \BMO is  $\CMO$,
we conclude that
$ T(f) \in \CMO$.
\end{proof}

Our next goal is to show that
Proposition \ref{yes} does not hold in general
without the cancellation condition
   $T(1)=T^*(1)=0$;
see Proposition \ref{no}.
For this purpose,
 we first need to recall the notion of a paraproduct.

Let $\varphi, \psi\in C_c^\infty$ be radial functions such that
$\supp (\varphi)\subset B_0(1)$,
$\widehat\psi (0)=0$, and
\begin{align*}
\int_0^\infty |\ft \psi(te_1)|^2\frac{dt}{t}=1,
\end{align*}

\noi	
 where $e_1=(1, 0,..., 0)\in\R^d$. For $t\in\R_{+}$,
we also define  the linear convolution operators $P_t$ and $Q_t$
by
\begin{align}
P_t f=\varphi_t *f
\qquad \text{and} \qquad Q_t f=\psi_t*f,
\label{Cal1}
\end{align}

\noi
where $h_t=t^{-d}h(t^{-1}\cdot)$
for a function $h$ on $\R^d$.
Then,  the Calder\'on reproducing formula \cite{Cal}
states that
\begin{align*}
\int_0^\infty Q_t^2 f \frac{dt}{t}=f
\end{align*}

\noi
holds in $L^2$,
where  $Q_t^2 f=Q_t(Q_t f)=\psi_t*\psi_t*f$.
Given $b\in\BMO$,
we  define
a paraproduct $\Pi_b$ by
\begin{align}
\Pi_b (f) =\int_0^\infty Q_t\big((Q_t b)(P_t f)\big) \frac{dt}{t},
\label{P0}
\end{align}

\noi
a priori defined for $f \in \S$.
It is well known by now that if $b\in \BMO$ then $\Pi_b$ is  a \CZ operator,  satisfying
\begin{align}
\Pi_b(1) = b \qquad \text{and}\qquad
\Pi^*_b(1) = 0,
\label{P1}
\end{align}
\noi
where once again the equalities are understood by testing
against   functions in $(C^\infty_c)_0$;
see, for example, \cite[Lemmas 1 and 2]{DJ}.
See also \eqref{X4}.

In what follows, we will prove that, for an appropriate symbol $b \in \BMO\setminus \CMO$\,,
the paraproduct $\Pi_b$ does {\it not} map $C_c^\infty$ into $\CMO$\,.
A naive approach would be to proceed as follows. Fix $b \in \BMO \setminus \CMO$\,.
Assuming that $\Pi_b$ maps $C_c^\infty$ into $\CMO$, we would have  $\Pi_b(\phi_R) \in \CMO$, where $\phi_R$ is as in \eqref{phi1}. If \CMO {\it were} weakly sequentially complete, we would
then have  $b=\Pi_b(1)\in \CMO$,
since, as seen in \eqref{X4},
$\Pi_b(1)$ is defined as the limit of $\Pi_b(\phi_R)$
by testing against $H^1$-functions,
namely as a limit with respect to the weak topology in $\CMO$\,,
in view of the duality $(\CMO)^* = H^1$.
This would be a contradiction with the fact that $b \in \BMO\setminus \CMO$\,.
However, this naive approach breaks down precisely because \CMO is \emph{not} sequentially complete; see again Appendix \ref{SEC:weak} and Remark \ref{REM:VMOii}. Therefore, a more delicate argument and a more careful choice of the symbol $b$ is needed here.

\begin{proposition} \label{no}
There exists a \CZ operator, in particular a paraproduct $\Pi_b$
with $b \in \BMO \setminus \CMO$ such that $\Pi_b(C^\infty_c) \notin \CMO$\,.

\end{proposition}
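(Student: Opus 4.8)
The plan is to make the obstruction explicit and localized at the origin. Take $b(x)=\log|x|$, a classical element of $\BMO(\R^d)\setminus\CMO$: it lies in $\BMO$ by the standard computation, and since $M_Q$ is invariant under adding constants we have $M_{Q_j}(b)=M_{[-1,1]^d}(\log|\cdot|)=:c_\ast\in(0,\infty)$ for every cube $Q_j=[-2^{-j},2^{-j}]^d$, so $\G_1(b)\ge c_\ast>0$ and hence $b\notin\CMO$ by Lemma~\ref{LEM:CMO}. (If one prefers $\G_2(b)=\G_3^Q(b)=0$ as well, one may replace $b$ by $\chi_0(x)\log|x|$ for a cutoff $\chi_0\in C^\infty_c$ equal to $1$ near the origin, but this is not needed.) As recalled above, $\Pi_b$ is then a \CZ operator with $\Pi_b(1)=b$ in the sense of \eqref{X4}--\eqref{P1}; denote its \CZ kernel by $K$. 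Fix a bump function $\chi\in C^\infty_c$ with $0\le\chi\le1$, $\chi\equiv1$ on $B_0(6)$, and $\supp\chi\subset B_0(12)$. Then $\Pi_b(\chi)\in L^2\cap\BMO$ (as $\chi\in L^2\cap L^\infty$ and $\Pi_b$ is bounded on $L^2$ and from $L^\infty$ to $\BMO$), and since $\chi\in C^\infty_c$ it suffices to prove that $\Pi_b(\chi)\notin\CMO$.

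The point is that $\Pi_b(\chi)$ must inherit the logarithmic singularity of $b$ at the origin up to a H\"older continuous error. With $\phi_R$ as in \eqref{phi1}, set $u_R:=\Pi_b(\phi_R)-\Pi_b(\chi)=\Pi_b(\phi_R-\chi)$ for $R$ large, using linearity of $\Pi_b$ on $\S$. On one hand, by \eqref{P1} the functions $\Pi_b(\phi_R)$ converge to $b$ in the weak-$\ast$ topology of $\BMO$, so $u_R\to h:=b-\Pi_b(\chi)$ weak-$\ast$ in $\BMO$. On the other hand, for $R$ large $\phi_R-\chi$ vanishes on $B_0(6)$, is bounded by $2$, and is compactly supported; so for $x\in B_0(1)$ the kernel representation \eqref{CZ1} gives the absolutely convergent formula $u_R(x)=\int_{|y|\ge6}K(x,y)(\phi_R-\chi)(y)\,dy$, and for $x,x'\in B_0(1)$ the regularity bound \eqref{CZ2} (applicable since $|x-x'|<2<\tfrac12|x'-y|$ whenever $|y|\ge6$) gives
\[
|u_R(x)-u_R(x')|\les |x-x'|^{\dl}\int_{|y|\ge6}\frac{dy}{|x'-y|^{d+\dl}}\les |x-x'|^{\dl},
\]
uniformly in $R$ and in $x,x'\in B_0(1)$. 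Hence, after subtracting its average over $B_0(1)$, the family $\{u_R\}$ is bounded in $C^{\dl}(\overline{B_0(1)})$; by Arzel\`a--Ascoli a subsequence converges uniformly on $\overline{B_0(1)}$, and testing against mean-zero $g\in C^\infty_c$ supported in $B_0(1)$ identifies the limit with $h$ modulo an additive constant. Therefore $h$ agrees a.e. on $B_0(1)$ with a function in $C^{\dl}(\overline{B_0(1)})$, so $\Pi_b(\chi)=b-h$ equals $\log|\cdot|$ plus a H\"older continuous function a.e. on $B_0(1)$. Consequently, for $j$ large (so that $Q_j\subset B_0(1)$),
\[
M_{Q_j}(\Pi_b(\chi))\ge M_{Q_j}(\log|\cdot|)-M_{Q_j}(h)\ge c_\ast-C\,\diam(Q_j)^{\dl}\too c_\ast>0,
\]
so $\G_1(\Pi_b(\chi))\ge c_\ast>0$, and thus $\Pi_b(\chi)\notin\CMO$ by Lemma~\ref{LEM:CMO}.

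The conceptual crux --- and the reason the naive argument described before the statement does not work --- is precisely that $\CMO$ is not weak-$\ast$ closed in $\BMO$, so one cannot pass membership in $\CMO$ through the weak-$\ast$ limit $\Pi_b(\phi_R)\to b$; instead one exhibits, for a single $\chi\in C^\infty_c$, a concrete scale-invariant obstruction (here the $\log$-singularity at the origin) in the honest $L^2$-function $\Pi_b(\chi)$ itself. I expect the only genuinely technical step to be the uniform-in-$R$ H\"older bound for $\Pi_b(\phi_R-\chi)$ near the origin, which is a routine \CZ kernel estimate, together with the compactness argument transferring it to the limit $h$; I do not anticipate a serious obstacle there. (One could alternatively expand $\Pi_b(\chi)$ near the origin directly from the Calder\'on representation \eqref{P0}, using that $P_t\chi$ is constant on a neighborhood of the origin for $t$ small, but this appears more delicate to make rigorous since $b\notin L^2$.)
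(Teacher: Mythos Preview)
Your argument is correct and takes a genuinely different route from the paper's. The paper chooses $b$ to be a compactly supported odd step-like function on $\R$ with a jump at the origin, and then exploits the Calder\'on representation \eqref{P0} directly: since $\varphi,\psi$ have compact support, one has $P_t\phi_R\equiv P_t1$ near the origin for small $t$ and large $R$, so the small-$t$ part of $\Pi_b(\phi_R)$ coincides near $0$ with the small-$t$ part of $\Pi_b(1)=b$, while the large-$t$ parts are smooth; the jump of $b$ therefore survives in $\Pi_b(\phi_R)$ verbatim, forcing $\G_1(\Pi_b(\phi_R))\neq0$. Your approach instead takes $b=\log|\cdot|$ and uses only the \CZ kernel regularity \eqref{CZ2}: since $\phi_R-\chi$ vanishes near $0$, the difference $\Pi_b(\phi_R)-\Pi_b(\chi)$ is uniformly H\"older there, and an Arzel\`a--Ascoli plus weak-$\ast$ identification shows $b-\Pi_b(\chi)$ is H\"older on $B_0(1)$; the logarithmic singularity is thus transferred to $\Pi_b(\chi)$. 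The paper's argument is more hands-on and avoids the compactness/limit step (it never needs to know that the operator is \CZ, only the explicit paraproduct structure); yours is more robust in that it uses only that $\Pi_b$ is a \CZ operator with $\Pi_b(1)=b$, and would work for any $b\in\BMO$ with $\G_1(b)>0$. Amusingly, the alternative you sketch in your last parenthetical---expanding via \eqref{P0} using that $P_t\chi$ is locally constant for small $t$---is exactly the paper's method; the paper sidesteps the difficulty you flag (that $\log|\cdot|\notin L^2$) by picking $b\in L^\infty$ with compact support, so the reproducing formula holds in $L^2$.
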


We begin by showing that, for our argument,  the \CMO space can be ``replaced''  by  $\VMO$, the space of functions of \emph{vanishing mean oscillation}
which
is defined as the closure
of $C^\infty\cap \BMO\,$ in the $\BMO$ topology.
In particular, we have
\begin{align}
 \CMO \subset \VMO \subset \BMO.
 \label{embed1}
\end{align}

\noi
We recall
 the following characterization of
$\VMO$\,; see, for example,
\cite[Th\'eor\`eme 5]{Bour}.

\begin{lemma}\label{LEM:VMO}
Let $f \in \BMO$\,.
Then, $f \in \VMO$
if and only if
$\G_1(f)  = 0$,
where $\G_1$ is as in~\eqref{R1}.
\end{lemma}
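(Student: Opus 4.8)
The plan is to establish the two inclusions
\[
\VMO \subseteq \{f \in \BMO : \G_1(f) = 0\}
\qquad\text{and}\qquad
\{f \in \BMO : \G_1(f) = 0\} \subseteq \VMO ,
\]
treating $\G_1$ as a sublinear functional on $\BMO$. I would first record three elementary properties. Since $\sup_{|Q| \le r} M_Q(f)$ is nondecreasing in $r$, the limit in \eqref{R1} exists (and equals $\inf_{r>0}$). From $M_Q(f+g) \le M_Q(f) + M_Q(g)$ and $M_Q(f) \le \|f\|_{\BMO}$ one gets subadditivity $\G_1(f+g) \le \G_1(f)+\G_1(g)$ together with $\G_1(f) \le \|f\|_{\BMO}$, so that $|\G_1(f)-\G_1(g)| \le \G_1(f-g) \le \|f-g\|_{\BMO}$. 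Hence $\G_1$ is Lipschitz continuous in the $\BMO$ topology, and in particular $\{\G_1 = 0\}$ is closed.

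For the forward inclusion I would use that $\VMO$ is realized as the $\BMO$-closure of the \emph{uniformly continuous} functions of bounded mean oscillation (the natural generating class). For such a $g$, with modulus of continuity $\omega_g$, every cube $Q$ satisfies $M_Q(g) \le \operatorname{osc}_Q g \le \omega_g(\diam Q)$, so that $\sup_{|Q| \le r} M_Q(g) \le \omega_g(\sqrt d\,r^{1/d}) \to 0$ as $r \to 0$, giving $\G_1(g)=0$. Since $\{\G_1 = 0\}$ is closed and contains this generating class, it contains $\VMO$.

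For the reverse inclusion I would argue by mollification. Fix a standard mollifier $\rho$ with $\supp \rho \subset B_0(1)$ and set $f_\eps = f * \rho_\eps$, where $\rho_\eps = \eps^{-d}\rho(\eps^{-1}\cdot)$. Then $f_\eps \in C^\infty \cap \BMO$ with $\|f_\eps\|_{\BMO} \le \|f\|_{\BMO}$, and since $\int \nb\rho_\eps = 0$, subtracting the constant $\ave_{B_x(\eps)} f$ yields $\|\nb f_\eps\|_{L^\infty} \les \eps^{-1}\|f\|_{\BMO}$; thus $f_\eps$ is Lipschitz, hence uniformly continuous. The crux is the estimate
\[
\|f - f_\eps\|_{\BMO} \les \eta(\eps) := \sup_{|Q| \le (C\eps)^d} M_Q(f),
\]
because $\eta(\eps) \to \G_1(f) = 0$ as $\eps \to 0$ then gives $f_\eps \to f$ in $\BMO$, whence $f \in \VMO$.

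To prove this estimate I would split according to the side length $\l$ of $Q$. For small cubes $\l \les \eps$, writing $f - f_\eps = \int \rho_\eps(z)(f - \tau_z f)\,dz$ with $\tau_z f = f(\cdot - z)$ and applying Minkowski's inequality gives $M_Q(f - f_\eps) \le \int \rho_\eps(z)\big(M_Q(f) + M_{Q-z}(f)\big)\,dz \les \eta(\eps)$, since all cubes involved have side $\les \eps$. The \textbf{main obstacle} is the regime of large cubes $\l \gg \eps$, where this translation bound only yields $\|f\|_{\BMO}$ and fails to detect the cancellation produced by averaging. Here I would compare $f_\eps$ with the blockwise conditional expectation $P_\eps f$ onto a grid of $\eps$-cubes (so $P_\eps f = \ave_{Q_j} f$ on each grid cube $Q_j$): because $f - P_\eps f$ has vanishing mean on every $Q_j$, the local averages do not drift across $Q$, giving $M_Q(f - P_\eps f) \les \eta(\eps)$ \emph{uniformly} over grid-aligned $Q$, while a direct pointwise estimate gives $\|f_\eps - P_\eps f\|_{L^\infty} \les \eta(\eps)$; the triangle inequality then controls $M_Q(f - f_\eps)$. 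The only remaining technical point is to pass from grid-aligned to arbitrary cubes, which costs a dimensional constant (comparing $Q$ with a bounded number of grid cubes of comparable size) and is absorbed into the constant $C$ in $\eta$.
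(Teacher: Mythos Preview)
The paper does not supply a proof of this lemma; it is quoted from \cite{Bour}. Your argument is the standard one and is essentially correct: $\G_1$ is $1$-Lipschitz on $\BMO$, so $\{\G_1=0\}$ is closed; uniformly continuous functions clearly lie in it, giving the forward inclusion; and the reverse inclusion follows by mollification once one shows $\|f-f_\eps\|_{\BMO}\lesssim\sup_{|Q|\le(C\eps)^d}M_Q(f)$, which your small-cube/large-cube split (the latter via comparison with a block-averaging projector $P_\eps$) handles cleanly. The grid-alignment issue you flag is indeed routine: enlarging $Q$ to the union of the $\eps$-grid cubes it meets inflates the measure by at most a dimensional factor when $\l(Q)\ge\eps$, and $M_Q(g)\le 2(|Q'|/|Q|)\,M_{Q'}(g)$ for any $Q'\supset Q$.

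One remark on the forward direction. You take as generating class the \emph{uniformly continuous} $\BMO$ functions, which is the correct choice for this characterization (and is the one used by Sarason and Bourdaud). The paper's surrounding text defines $\VMO$ as the $\BMO$-closure of $C^\infty\cap\BMO$, but that cannot be literally compatible with the lemma: $\sin(x^2)\in C^\infty\cap L^\infty$ has $\G_1(\sin(x^2))>0$, since intervals of length $\sim\pi/a$ centred near $x=a$ carry a full period of the sine. Your reverse direction in fact approximates $f$ by $f_\eps$ that is smooth \emph{and} Lipschitz (hence uniformly continuous), so it lands in either generating class; the discrepancy is in the paper's stated definition of $\VMO$, not in your proof.
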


The next lemma shows that
\CMO is the subspace of function in \VMO which ``vanish" at infinity;
see \cite[Th\'eor\`eme 7]{Bour}.

\begin{lemma}\label{vanishatinfty}
Let $\BMO_c$ denote the subspace
of $\BMO$ consisting of functions with compact support.
Then, we have
\[\CMO = \VMO \cap \cj{\BMO_c},\]

\noi
where  the closure is with respect to the $\BMO$\,-norm.
\end{lemma}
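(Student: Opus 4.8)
The plan is to establish the two inclusions in $\CMO = \VMO \cap \overline{\BMO_c}$ separately, using the characterization of $\VMO$ (Lemma~\ref{LEM:VMO}) and of $\CMO$ (Lemma~\ref{LEM:CMO}) to translate membership in these spaces into the vanishing of the quantities $\G_1$, $\G_2$, and $\G_3^Q$ from \eqref{R1}--\eqref{R3}. The inclusion $\CMO \subset \VMO \cap \overline{\BMO_c}$ is immediate: since $C_c^\infty \subset C^\infty \cap \BMO$ and $C_c^\infty \subset \BMO_c$, taking closures in the $\BMO$ topology gives $\CMO \subset \VMO$ and $\CMO \subset \overline{\BMO_c}$ at once.

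For the reverse inclusion, let $f \in \VMO \cap \overline{\BMO_c}$; by Lemma~\ref{LEM:CMO} it suffices to show $\G_1(f) = \G_2(f) = \G_3^Q(f) = 0$ for every cube $Q$. Since $f \in \VMO$, Lemma~\ref{LEM:VMO} already yields $\G_1(f) = 0$, so the content is in the remaining two quantities, and here I would use that both $\G_2$ and $\G_3^Q$ are stable under $\BMO$-perturbations: from $M_Q(h) \le M_Q(h - g) + M_Q(g) \le \|h-g\|_{\BMO} + M_Q(g)$, taking the supremum over $|Q|\ge r$ and then $r\to\infty$ (respectively, the $\limsup$ over $|x_0|\to\infty$ of $M_{Q+x_0}$) gives
\[
\G_2(h) \le \|h - g\|_{\BMO} + \G_2(g), \qquad \G_3^Q(h) \le \|h - g\|_{\BMO} + \G_3^Q(g)
\]
for all $g, h \in \BMO$. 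Choosing $g_n \in \BMO_c$ with $\|f - g_n\|_{\BMO} \to 0$, it then reduces to verifying $\G_2(g) = \G_3^Q(g) = 0$ for an arbitrary $g \in \BMO_c$, since then $\G_2(f) \le \|f - g_n\|_{\BMO} \to 0$ and likewise $\G_3^Q(f) = 0$.

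Finally, for $g \in \BMO_c$ I would pick a compactly supported representative, say with support in $B_0(\rho)$; since it is locally integrable with compact support, this representative lies in $L^1$. Then for any cube with $|Q|\ge r$ one has $M_Q(g) \le \tfrac{2}{|Q|}\int_Q|g| \le \tfrac{2}{r}\|g\|_{L^1} \to 0$, so $\G_2(g) = 0$; and for a fixed cube $Q$, as soon as $|x_0|$ is large enough that $Q+x_0$ is disjoint from $B_0(\rho)$ one has $g \equiv 0$ on $Q+x_0$, so $M_{Q+x_0}(g) = 0$ and $\G_3^Q(g) = 0$. I do not expect a genuine obstacle here; the only points requiring a little care are the interpretation of $\BMO_c$ as the space of classes admitting a compactly supported representative (so that $\|g\|_{\BMO}$ is unchanged by passing to that representative) and the $L^1$-membership of such a representative, both of which are routine, together with bookkeeping the order of limits ($r\to\infty$ versus the inner supremum, $|x_0|\to\infty$ versus the $\limsup$) in the stability estimates.
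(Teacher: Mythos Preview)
Your argument is correct. The paper does not actually supply a proof of this lemma; it simply cites \cite[Th\'eor\`eme~7]{Bour}. Your approach is the natural direct verification: the forward inclusion is immediate from $C_c^\infty \subset (C^\infty\cap\BMO)\cap\BMO_c$, and for the reverse you combine the $\G_1$-characterization of $\VMO$ (Lemma~\ref{LEM:VMO}) with the $\G_1,\G_2,\G_3^Q$-characterization of $\CMO$ (Lemma~\ref{LEM:CMO}), reducing via the stability estimates $\G_j(f)\le\|f-g\|_{\BMO}+\G_j(g)$ to the elementary computation that any compactly supported $g\in L^1_{\text{loc}}$ has $\G_2(g)=\G_3^Q(g)=0$. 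Since Lemmas~\ref{LEM:VMO} and~\ref{LEM:CMO} are themselves quoted from \cite{Bour}, your proof essentially unpacks the relevant implication of Bourdaud's Th\'eor\`eme~7 using the tools already on the table, making the exposition more self-contained at little cost.
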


 Let $f \in C^\infty_c(\R^d)$.
 As we saw in \eqref{vanishes},  we have that $T(f)$ (which is in $\BMO$\,) vanishes at infinity.
 Hence, from \cite[Th\'eor\`eme 6]{Bour},
we conclude that $T(f) \in \cj{\BMO_c}$.
Therefore, it follows from  Lemma~\ref{vanishatinfty}
that $T(f) \in \CMO$ if and only if
$T(f) \in \VMO$\,.

We are now ready to prove Proposition \ref{no}.

\begin{proof}[Proof of Proposition \ref{no}]
In the following, we work on $\R$.
For a suitable choice of $b \in \BMO \setminus \VMO$,
we show that $\Pi_b(\phi_R) \notin \VMO$ for any finite $R \gg 1$
(and hence $\Pi_b(\phi_R) \notin \CMO$ in view of
\eqref{embed1}),
%the observation after Lemma \ref{embed1}),
where $\phi_R$ is as in \eqref{phi1}.

From \eqref{Cal1}, we have
\begin{align}
Q_t\big((Q_t b)(P_t \phi_R)\big)(x)
& =\frac{1}{t^{2d}}
\int_{\R^2} \psi\Big(\frac{ x - y}{t}\Big) Q_t b(y)
\varphi\Big(\frac{ y - z}{t}\Big)
\phi_R(z) dz dy.
\label{P5}
\end{align}

\noi
Suppose that $\supp \psi$, $\supp \varphi \subset [-C_0, C_0]$
for some $C_0 > 0$.
Then, we have
$ |x - y|, |y - z| \le t C_0$
on the right-hand side of \eqref{P5}.
Thus, by the triangle inequality, we have
\begin{align}
|z| \le 2tC_0 + |x| \le  2C_0 + 1
\label{P6}
\end{align}

\noi
for any $0 < t \le 1$ and $|x| \le 1$.
Hence,  from \eqref{P5} with \eqref{phi1} and \eqref{P6}, we have
\begin{align}
\begin{split}
Q_t\big((Q_t b)(P_t \phi_R)\big)(x)
& = Q_t\big((Q_t b)(P_t 1)\big)(x)
\end{split}
\label{P7}
\end{align}

\noi
for any $0 < t \le 1$,  $|x| \le 1$,
and $R \ge 4C_0 + 2$.

We now choose a suitable function
$b \in \BMO \setminus \! \VMO$\,.
Define an odd function  $b \in
  C^\infty((-\infty, 0) \cup (0, \infty); \R)$ by setting
\begin{align*}
b(x) =
\begin{cases}
1, & \text{for $0 <  x \le 1$},\\
-1, & \text{for $-1 \le x <0$},\\
0, & \text{for $x = 0 $ and $ |x| \ge 2$}
\end{cases}
%\label{Cal2}
\end{align*}

\noi
and smoothly interpolating  on the intervals
$[-2, -1]$ and
$[1, 2]$ in such a way that $b$ is odd.
While we have $b \in L^\infty \subset \BMO$\,,
we see that $b \notin \VMO$
due to the jump at $x = 0$.
Indeed, with $Q = [-a, a]$, we have
\begin{align*}
 M_Q(b)=  \frac{1}{|Q|} \int_Q|b(x) - \ave_Q b |\,dx
=
 \frac{1}{|Q|} \int_Q|b(x)  |\, dx
= 1
\end{align*}

\noi
for  any $0 < a < 1$,
which implies $b \notin \VMO$ in view of Lemma \ref{LEM:VMO}.

From \eqref{P0} and \eqref{P1}, we have
\begin{align*}
b = \Pi_b (1) =\int_0^\infty Q_t\big((Q_t b)(P_t 1)\big) \frac{dt}{t},
\end{align*}

\noi
where the second inequality
holds as elements in the dual space of $H^1$
(namely, as elements in $\BMO$\,).
Write $b = b_1 + b_2$,
where $b_1$ is defined by
\begin{align}
b_1 = \int_0^1 Q_t\big((Q_t b)(P_t 1)\big) \frac{dt}{t}.
\label{P9}
\end{align}

\noi
It is easy to see that $b_2$ is a smooth function on $\R$
since it essentially corresponds to the low frequency part
of the paraproduct $\Pi_b(1)$.
Moreover,  since $\varphi$ and $\psi$
are even  and $b$ is odd,
it follows from \eqref{Cal1} that  $b_2$ is a smooth odd function
which in particular implies $b_2(0) = 0$.
Hence,
we conclude that $b_1$ has a jump
discontinuity at $x = 0$:
\begin{align}
b_1(0+) - b_1(0-) = 2.
\label{P10}
\end{align}

\noi

Fix $R \ge 4C_0 + 2$.
Write $\Pi_b(\phi_R) = A_1 + A_2$,
where $A_1$ is defined by
\begin{align}
A_1 = \int_0^1 Q_t\big((Q_t b)(P_t \phi_R)\big) \frac{dt}{t}.
\label{P11}
\end{align}

\noi
Then, from \eqref{P7}, \eqref{P9},
and \eqref{P11}, we have
$A_1(x) = b_1(x)$
for any $|x| \le 1$.
Hence, from~\eqref{P10}, we obtain
\begin{align*}
A_1(0+) - A_1(0-) = 2,
\end{align*}

\noi
where we used the fact that $A_2(0) = 0$ just as for $b_2$
discussed above.
By noting that  $A_2$ is a smooth function on $\R$,
we then have
\begin{align*}
\Pi_b(\phi_R)(0+) - \Pi_b(\phi_R)(0-) = 2.
\end{align*}

\noi
Hence,  by computing $M_Q(\Pi_b(\phi_R))$
for $Q = (-a, a)$ with $a$ tending to $0$,
we see that
 $\G_1(\Pi_b(\phi_R)) \ne 0$,
where
$M_Q$ and
$\G_1$ are as in
\eqref{M1} and
\eqref{R1}, respectively.
Therefore, we conclude from Lemma \ref{LEM:VMO}
that $\Pi_b(\phi_R)\notin \VMO$\,.
This shows that, in general, a \CZ operator
does {\it not} map $C_c^\infty$ into $\VMO$ or $\CMO$.
\end{proof}

\begin{remark}\rm

As subspaces of $\BMO$, the difference between \CMO and \VMO
 appears only in the global aspect of their elements,
namely the decay at spatial infinity of their elements
(just like the difference between $C_0$ and $C$ as subspaces of $L^\infty$).
In the counterexample presented above,
the failure of $C^\infty_c$-to-$\CMO$ mapping property comes from
the local   property of elements
shared by \CMO and $\VMO$\,, namely, they can not have an isolated jump discontinuity,
while  an element in $\BMO$ can have an isolated jump discontinuity,

\end{remark}

\appendix

\section{Lack of compactness of convolution operators in Lebesgue spaces}

From the result in the previous section,
 it may be  tempting to think that some  \CZ operators of convolution type may be compact. However it is known that that cannot be true. See for example, Feitchinger \cite[p.\,310]{F} where it is stated  ``that in the case of a non-compact group there do not
exist compact multipliers between two isometrically translation invariant
spaces". We will provide a simple proof of this fact in the context of
the Lebesgue spaces $L^p(\R^d)$.

Consider first the following simple but illustrative example.
Given $\eps>0$, let  $J^{-\eps}$ be the  Bessel potential of order $\eps > 0$.
 Namely, $J^{-\eps}$ is a Fourier multiplier operator with a bounded multiplier $(1+|\xi|^2)^{-\frac \eps2}$.
  It is not difficult to show that $J^{-\eps}$ is a Calder\'on-Zygmund operator.
If $J^{-\eps}$ were  compact on $L^2(\R^d)$,
then this would imply that
 $L^2(\mathbb R^d)$ compactly embeds into  $H^{-\eps}(\mathbb R^d)$, which is false.
 However,  the situation is different in the compact setting.
 We point out that if the underlying space is the $d$-dimensional torus $\T^d = (\R/\Z)^d$,
then Rellich's lemma states that the embedding  $L^2(\T^d)  \hookrightarrow H^{-\eps}(\T^d)$
is indeed compact.
Namely, the lack of compactness of the Bessel potential $J^{-\eps}$
on $L^2(\R^d)$ comes from the fact that functions on $\R^d$
can run off  to the spatial infinity.

The example above is
representative
%illustrative
of the following general fact.

\begin{proposition}
\label{multiplier-noncompact}
Let $1\leq p < \infty.$
A non-trivial\footnote{Namely, $T\ne 0$.} translation invariant  operator $T$
%that is bounded on $L^p(\R^d)$
is not a compact operator
on $L^p(\R^d)$.

\end{proposition}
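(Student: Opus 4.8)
The plan is to argue by contradiction: suppose $T$ is a non-trivial translation invariant operator that is compact on $L^p(\R^d)$. Translation invariance means that $T$ commutes with the translation operators $\tau_h f(x) = f(x-h)$ for every $h \in \R^d$, i.e. $T \tau_h = \tau_h T$. Since $T \neq 0$, there exists $f \in L^p(\R^d)$ with $Tf \neq 0$; fix such an $f$. I would then consider the sequence $f_n = \tau_{h_n} f$ for a sequence of translation vectors $h_n \in \R^d$ with $|h_n| \to \infty$. Because each $\tau_{h_n}$ is an isometry on $L^p$, the sequence $\{f_n\}$ is bounded in $L^p$, so by compactness of $T$ the sequence $\{Tf_n\} = \{\tau_{h_n} Tf\}$ has a subsequence converging strongly in $L^p(\R^d)$. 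The contradiction will come from showing that $\{\tau_{h_n} Tf\}$ cannot have any convergent subsequence when $|h_n| \to \infty$, because the mass of $\tau_{h_n} Tf$ escapes to spatial infinity.

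\textbf{Key steps.} First I would record the commutation identity $T\tau_h = \tau_h T$, which is the precise meaning of translation invariance and which ensures $Tf_n = \tau_{h_n}(Tf)$. Second, I would establish the escape-to-infinity lemma: if $g \in L^p(\R^d)$ with $g \neq 0$ and $|h_n| \to \infty$, then $\{\tau_{h_n} g\}$ has no strongly convergent subsequence in $L^p$. One clean way to see this: by passing to a subsequence we may assume the $h_n$ are sufficiently spread out. Choose $R>0$ so large that $\|g\|_{L^p(B_0(R))} \geq \frac12 \|g\|_{L^p}$, and observe that $\tau_{h_n}g$ is concentrated (up to half its norm) on $B_{h_n}(R)$. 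If a subsequence $\tau_{h_{n_k}}g \to G$ in $L^p$, then for fixed $m$ and large $k$ the balls $B_{h_{n_k}}(R)$ are disjoint from $B_{h_{n_m}}(R)$, which forces $\|G\|_{L^p(B_{h_{n_m}}(R))}$ to be both $\gtrsim \|g\|_{L^p}$ (by closeness to $\tau_{h_{n_m}}g$ via a triangle-inequality/Fatou argument) and $\to 0$ as $m \to \infty$ (since $G \in L^p$ has tails vanishing at infinity), a contradiction. Alternatively and perhaps more slickly, one notes $\tau_{h_n} g \rightharpoonup 0$ weakly in $L^p$ for $1<p<\infty$ (and weak-$*$ for $p$ handled separately), while $\|\tau_{h_n}g\|_{L^p} = \|g\|_{L^p} \neq 0$, so no subsequence can converge strongly (the strong limit would have to be the weak limit $0$). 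The case $p=1$ requires the more hands-on disjoint-support argument since $L^1$ is not reflexive. Third, I would combine: boundedness of $\{f_n\}$ plus compactness of $T$ gives a strongly convergent subsequence of $\{Tf_n\} = \{\tau_{h_n}(Tf)\}$, contradicting step two applied to $g = Tf \neq 0$.

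\textbf{Main obstacle.} The only real subtlety is the escape-to-infinity lemma in the endpoint case $p = 1$, where one cannot invoke reflexivity and weak convergence; there the disjoint-support/tail-vanishing argument must be carried out carefully, choosing the translation subsequence so that the relevant balls are pairwise disjoint and quantifying how much $L^1$-mass sits on a fixed large ball. For $1 < p < \infty$ the weak-convergence shortcut makes the argument essentially immediate. A minor point to state cleanly at the outset is what ``translation invariant operator'' means precisely (a bounded linear operator on $L^p$ commuting with all translations), so that $Tf_n = \tau_{h_n}Tf$ is justified; this is standard but worth a sentence.
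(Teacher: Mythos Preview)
Your proposal is correct and follows the same overall architecture as the paper's proof: fix $f$ with $Tf\neq 0$, translate by vectors going to infinity, and derive a contradiction from the fact that the images $\tau_{h_n}(Tf)$ cannot have a strongly convergent subsequence. The difference lies only in how the ``escape-to-infinity'' step is executed. The paper invokes the Kolmogorov--Riesz characterization of precompact sets in $L^p$ (specifically the uniform-tail condition $\int_{|x|>R}|T f_{n_k}|^p\,dx<\eps^p$), then shows by a change of variables and dominated convergence that this integral actually tends to $\|Tf\|_{L^p}^p>0$, contradicting the tail bound. This gives a single argument valid for all $1\le p<\infty$ at the price of citing an external compactness criterion. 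Your route is more self-contained: for $1<p<\infty$ you use that $\tau_{h_n}g\rightharpoonup 0$ weakly while $\|\tau_{h_n}g\|_{L^p}=\|g\|_{L^p}\neq 0$, and for $p=1$ you argue directly with tails. Both approaches are standard; the paper's is more uniform, yours more elementary in the reflexive range. Your $p=1$ sketch could be streamlined: if $\tau_{h_{n_k}}g\to G$ in $L^1$ then $\|G\|_{L^1}=\|g\|_{L^1}>0$, so some fixed ball $B_0(R)$ carries a positive fraction of $\|G\|_{L^1}$, whereas $\|\tau_{h_{n_k}}g\|_{L^1(B_0(R))}=\|g\|_{L^1(B_{-h_{n_k}}(R))}\to 0$.
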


\begin{proof}
Without loss of generality, we assume that $T$
is  bounded on $L^p(\R^d)$ since, otherwise, $T$ is obviously not  compact.
Since  $T$ is translation invariant, we have
\begin{align}
T(\tau_yf)= \tau_yT(f),
\label{Cal4}
\end{align}

\noi
where $\tau_yf(x)= f(x-y)$.

Fix $f \in L^p$ such that $T(f) \ne 0$.
Given a sequence  $\{y_n\}_{n\in \N}\subset \R^d$ with $|y_n|\to \infty$,
let
$f_n=\tau_{y_n}f$.
Clearly, the sequence $\{f_n\}_{n \in \N}$ is bounded in $L^p$.
If $T$ were compact on $L^p$, then
it would follow from
the Kolmogorov-Riesz theorem
(see \cite[Theorem on p.\,275]{Y}; see also \cite[Theorem 5]{HHM})
that there exists
a subsequence  $\{f_{n_k} \}_{k \in \N}$
such that,
given any $\eps > 0$,
there exists $R = R(\eps) > 0$ such that
\begin{align}
\int_{|x| > R} |T(f_{n_k})(x)|^p dx < \eps^p
\label{Cal3}
\end{align}

\noi
for any $k \in \N$.

Fix $0 < \eps  < \frac 12 \| Tf\|_{L^p} $.
Suppose that there exists $R = R(\eps) > 0$
such that \eqref{Cal3} holds.
Then, from the definition of $f_{n_k}$, \eqref{Cal4},
a change of variables, and the dominated convergence theorem
together with the $L^p$-boundedness of $T$,
we have
\begin{align*}
 \lim_{k \to \infty} \int_{|x| > R} |T(f_{n_k})(x)|^p dx
&  =
\lim_{k \to \infty}\int_{|x| > R} |T(f)(x- y_{n_k})|^p dx\\
& =
\lim_{k \to \infty}\int_{|x + y_{n_k}| > R} |T(f)(x)|^p dx
= \| Tf\|_{L^p}^p > 2^p \eps^p,
\end{align*}

\noi
which is a contradiction to \eqref{Cal3}.
Therefore, we conclude that  a
non-trivial translation invariant
 $T$ is not compact on $L^p$.
\end{proof}

\section{{\it BMO}, {\it VMO}, and {\it CMO} are not weakly sequentially complete}
\label{SEC:weak}

In this appendix,  we study
weakly sequential completeness properties
for the $\BMO$\,-type spaces
which is closely related to the need for the $L^\infty$-condition in the proof of Theorem \ref{THM:1};
see
Remark \ref{REM:VMO}\,(ii).

Weakly sequential completeness of the Hardy space $H^1$
follows as an immediate  consequence of
\cite[Lemma~(4.2)]{CW}. A natural question is then to ask whether the dual and pre-dual of $H^1$ (\BMO and
$\CMO$\,, respectively) are also weakly sequential complete. We will show that the answer is negative.

\begin{proposition}\label{PROP:D3}
$\BMO$, $\VMO$, and $\CMO$
are not weakly sequentially complete.
\end{proposition}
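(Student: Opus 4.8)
The plan is to show that each of $\BMO$, $\VMO$, and $\CMO$ fails weak sequential completeness by exhibiting, in each space, a sequence that is weakly Cauchy but whose only candidate limit lies outside the space. The natural place to start is $\CMO$, where the excerpt has already done most of the work: we saw that for a paraproduct $\Pi_b$ with $b \in \BMO \setminus \CMO$ (indeed $b \in \BMO \setminus \VMO$), the truncated functions $T(\phi_{R_j}) \in \CMO$ satisfy $\|\Pi_b(\phi_{R_j})\|_{\BMO} \lesssim 1$ uniformly, and $\Pi_b(\phi_{R_j}) \to \Pi_b(1) = b$ in the weak-$\ast$ topology of $\BMO = (H^1)^*$ as $R_j \to \infty$ (testing against $(C^\infty_c)_0$, which is dense in $H^1$, via \eqref{X4}). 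Since $(\CMO)^* = H^1$, convergence against all of $H^1$ is exactly convergence in the weak topology of $\CMO$; so $\{\Pi_b(\phi_{R_j})\}_j$ is a weakly Cauchy sequence in $\CMO$. If $\CMO$ were weakly sequentially complete, its weak limit would be an element of $\CMO$, but that limit is $b \notin \CMO$ — more strongly, $b \notin \VMO$, which rules out membership by the argument in the proof of Proposition~\ref{no}. This contradiction settles $\CMO$.

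For $\VMO$ and $\BMO$ I would use essentially the same mechanism, adjusting the ambient duality. For $\VMO$, one uses that $\VMO$ also has $H^1$ as its dual (this is the classical Coifman–Weiss duality; alternatively $\VMO$ sits between $\CMO$ and $\BMO$ and pairs with $H^1$), so the same sequence $\{\Pi_b(\phi_{R_j})\}_j$ — which lies in $\CMO \subset \VMO$ by \eqref{embed1} — is weakly Cauchy in $\VMO$ with would-be limit $b \notin \VMO$. For $\BMO$ itself the situation is different since $\BMO$ is a dual space, not a predual, so I would instead produce a sequence in $\BMO$ that is weakly Cauchy against the predual $H^1$ but whose natural limit (in the bigger space of "generalized $\BMO$" functionals, or simply as a distribution) is not realized by an $\BMO$ function. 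A clean way: take $b_n(x) = \log|x| \cdot \chi_{\{1/n < |x| < n\}}$ (or a smoothed version), which lies in $\BMO$ with uniformly bounded norm, and check $\langle b_n, g\rangle \to \langle \log|x|, g\rangle$ for every $g \in H^1$; since $H^1$-functions have enough decay and vanishing moments, the pairing with the unbounded $\log|x|$ converges, yet $\log|x| \notin L^\infty$ — one must confirm it is the convergence that fails to land in $\BMO$, which it does not here since $\log|x| \in \BMO$, so a slightly different unbounded-growth function (e.g. one growing like $(\log|x|)^2$ near infinity, which is not in $\BMO$) truncated to $\BMO$ pieces is the right choice. Equivalently, and perhaps more in the spirit of the paper, one can reuse the paraproduct construction with the ambient predual $H^1$: the sequence $\Pi_b(\phi_{R_j}) \in \CMO \subset \BMO$ is weakly-$\ast$ convergent to $b$, and one can perturb it to be weakly (not just weak-$\ast$) Cauchy with limit outside $\BMO$ by composing with a further unbounded amplification.

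The main obstacle I anticipate is the $\BMO$ case: because $\BMO$ is itself a dual space, weak sequential completeness is a more subtle property (dual spaces are weak-$\ast$ sequentially complete but need not be weakly sequentially complete), and one must be careful to use the weak topology $\sigma(\BMO, (\BMO)^*)$ rather than the weak-$\ast$ topology $\sigma(\BMO, H^1)$. The cleanest route is to invoke the abstract principle that a Banach space $X$ is weakly sequentially complete only if it contains no isomorphic copy of $c_0$, together with the known fact that $\BMO$ (and $\VMO$ and $\CMO$) do contain a complemented copy of $c_0$ — this can be exhibited via lacunary sequences or disjointly supported bump functions of unit $\BMO$-norm. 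I would therefore structure the proof to give the elementary explicit weak-Cauchy-but-no-limit sequence for $\CMO$ and $\VMO$ (leveraging Proposition~\ref{no} and the $H^1$-duality), and for $\BMO$ either push through the explicit truncated-unbounded-function construction with a careful verification that the weak limit exists as an $(H^1 = $ predual$)$-functional but is not an $\BMO$ function, or else cite the $c_0$-embedding obstruction; I expect the write-up to present the explicit constructions, relegating the $c_0$ remark to a concluding sentence.
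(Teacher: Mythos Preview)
Your primary line for $\CMO$ has a real gap. You assert that $\Pi_b(\phi_{R_j}) \in \CMO$ for $b \in \BMO \setminus \VMO$, citing ``the excerpt.'' But the conclusion $T(\phi_R) \in \CMO$ in the proof of Theorem~\ref{THM:1} (namely \eqref{X13}) was obtained \emph{under the $L^2$-condition~\textup{(i)}}, which a non-compact paraproduct has no reason to satisfy. In fact you invoke Proposition~\ref{no}, whose entire point is that for the specific $b$ built there one has $\Pi_b(\phi_R) \notin \VMO$ for all large $R$ --- so your proposed sequence does not even lie in $\CMO$. To salvage this route you would need an operator $T$ satisfying condition~(i) yet with $T(1) \notin \CMO$; whether such a $T$ exists is exactly what the paper leaves unresolved (Remark~\ref{REM:VMO}). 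Your $\VMO$ argument inherits the same gap and additionally leans on $(\VMO)^* = H^1$; the Coifman--Weiss duality you cite is $(\CMO)^* = H^1$, since their ``VMO'' is the present paper's $\CMO$.

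The paper's actual proof is precisely your ``backup.'' It embeds a copy of $c_0$ into $\CMO$ via disjointly supported translates $f_n(x) = c\,\varphi_\eps(x - 10n)$ of a fixed mean-zero bump, verifies $\big\|\sum_n a_n f_n\big\|_{\BMO} \approx \sup_n |a_n|$ by direct estimation, and applies Lemma~\ref{LEM:D2} (a Banach space containing a copy of $c_0$ is not weakly sequentially complete). Since this copy of $c_0$ already sits inside $\CMO \subset \VMO \subset \BMO$, one invocation of Lemma~\ref{LEM:D2} (together with Lemma~\ref{LEM:B3}) handles all three spaces simultaneously --- no separate construction for $\BMO$ is needed. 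Promote this from a closing remark to the main proof and drop the paraproduct route entirely.
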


Before proceeding to a proof of Proposition \ref{PROP:D3},
we state several lemmas.
We first recall  the following lemma from
\cite[Proposition 2.3]{BL}.

\begin{lemma}\label{LEM:B3}
Let $X$ be a weakly sequentially complete Banach space.
Then, any closed subspace  $Y$ of $X$  is a weakly sequentially complete Banach space.
\end{lemma}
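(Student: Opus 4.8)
The plan is to unwind the definition of weak sequential completeness and shuttle a weakly Cauchy sequence between $Y$ and $X$ by restricting functionals from $X^*$ to $Y^*$ and, conversely, extending functionals from $Y^*$ to $X^*$ via Hahn--Banach. Recall that a Banach space is \emph{weakly sequentially complete} precisely when every weakly Cauchy sequence converges weakly to an element of the space; here a sequence $\{x_n\}$ is weakly Cauchy when the scalar sequence $\{f(x_n)\}$ is Cauchy for each functional $f$ in the dual. (Note that $Y$, being norm-closed in the Banach space $X$, is itself a Banach space, so there is nothing to check on that front.)

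First I would fix an arbitrary weakly Cauchy sequence $\{y_n\}_{n \in \N}$ in $Y$ and observe that it is automatically weakly Cauchy \emph{as a sequence in} $X$: every $f \in X^*$ restricts to a bounded functional $f|_Y \in Y^*$, so $\{f(y_n)\} = \{(f|_Y)(y_n)\}$ is Cauchy by hypothesis. Since $X$ is weakly sequentially complete by assumption, there exists $x \in X$ with $y_n \rightharpoonup x$ weakly in $X$; that is, $f(y_n) \to f(x)$ for every $f \in X^*$.

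The key step is to show that the weak limit $x$ in fact lies in $Y$. For this I would invoke the fact that a norm-closed subspace of a Banach space, being convex and closed, is weakly closed (Mazur's theorem, itself a consequence of Hahn--Banach separation of a point from a closed convex set). Since each $y_n \in Y$ and $y_n \rightharpoonup x$, the limit $x$ belongs to the weak closure of $Y$, which coincides with $Y$; hence $x \in Y$.

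Finally, to upgrade this to weak convergence \emph{within} $Y$, I would use that every $g \in Y^*$ extends, by the Hahn--Banach theorem, to some $f \in X^*$ with $f|_Y = g$. Then $g(y_n) = f(y_n) \to f(x) = g(x)$, where the last equality uses $x \in Y$. Thus $y_n \rightharpoonup x$ weakly in $Y$, which shows that $Y$ is weakly sequentially complete. I expect the only genuinely nontrivial ingredient to be the weak-closedness of $Y$ (Mazur's theorem), which guarantees that the limit does not escape $Y$; the remaining steps are routine manipulations with restrictions and extensions of linear functionals.
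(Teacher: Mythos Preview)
Your argument is correct and is the standard one: pass the weakly Cauchy sequence up to $X$ via restriction of functionals, use weak sequential completeness of $X$ to obtain a limit, invoke Mazur's theorem to trap the limit in $Y$, and then use Hahn--Banach extension to conclude weak convergence in $Y$. Each step is sound.

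There is no proof in the paper to compare against: the paper simply cites this lemma from \cite[Proposition 2.3]{BL} without reproducing an argument. Your write-up thus supplies what the paper omits, and does so along the expected lines.
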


Let $c_0=c_0 (\N) \subset \l^\infty(\N)$
denote the  space of
bounded sequences
${\bf a} = \{a_n\}_{n \in \N}$
such that $\displaystyle\lim_{n \to \infty}a_n = 0$. If we endow $c_0$ with the
supremum norm, then  we have $(c_0)^* =  \l^1(\N)= \l^1$,
where the duality pairing is given by
\[\jb{ {\bf a}, {\bf b}} = \sum_{n=1}^\infty a_nb_n\]

\noi
for ${\bf a}\in c_0$ and
${\bf b} \in  \l^1$.
The space $c_0$ provides a useful example for many functional analytic situations related to weak sequential (non-)completeness. For the readers convenience, we include a proof of the following fact.

\begin{lemma}\label{LEM:D1}
$c_0$ is not weakly sequentially complete.

\end{lemma}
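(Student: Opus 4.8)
The plan is to exhibit an explicit weakly Cauchy sequence in $c_0$ that fails to converge weakly in $c_0$, witnessing the lack of weak sequential completeness. The natural candidate is the sequence of partial-sum vectors: for $N \in \N$, let $e^{(N)} = (1, 1, \dots, 1, 0, 0, \dots) \in c_0$, i.e. $e^{(N)}_n = 1$ for $n \le N$ and $e^{(N)}_n = 0$ for $n > N$. Each $e^{(N)}$ visibly lies in $c_0$ since it is eventually zero.

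First I would check that $\{e^{(N)}\}_{N \in \N}$ is weakly Cauchy in $c_0$. Since $(c_0)^* = \ell^1$, this amounts to showing that for every ${\bf b} = \{b_n\}_{n \in \N} \in \ell^1$ the scalar sequence $\jb{e^{(N)}, {\bf b}} = \sum_{n=1}^{N} b_n$ converges as $N \to \infty$; but this is just the statement that the series $\sum_{n=1}^\infty b_n$ converges, which holds because ${\bf b} \in \ell^1$. Hence $\{e^{(N)}\}_N$ is weakly Cauchy.

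Next I would argue that $\{e^{(N)}\}_N$ has no weak limit in $c_0$. Suppose, for contradiction, that $e^{(N)} \wto {\bf a}$ weakly in $c_0$ for some ${\bf a} = \{a_n\} \in c_0$. Testing against the coordinate functionals $\delta_m \in \ell^1$ (where $\delta_m$ picks out the $m$-th coordinate) gives $a_m = \lim_{N \to \infty} \jb{e^{(N)}, \delta_m} = \lim_{N \to \infty} e^{(N)}_m = 1$ for every $m$, since $e^{(N)}_m = 1$ once $N \ge m$. Thus ${\bf a}$ would have to be the constant sequence $(1, 1, 1, \dots)$, which does not belong to $c_0$ — a contradiction. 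Therefore $\{e^{(N)}\}_N$ is a weakly Cauchy sequence in $c_0$ with no weak limit in $c_0$, so $c_0$ is not weakly sequentially complete.

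There is no real obstacle here; the only point requiring minor care is the characterization of weak Cauchyness via the predual pairing $(c_0)^* = \ell^1$ and the identification of the would-be limit through the coordinate functionals. Both are routine. (Alternatively one could invoke the classical fact that a weakly sequentially complete Banach space contains no isomorphic copy of $c_0$, together with $c_0$ containing itself, but the direct argument above is cleaner and self-contained.)
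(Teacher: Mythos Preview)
Your proof is correct and follows essentially the same approach as the paper: both exhibit the partial-sum sequence $(1,\dots,1,0,0,\dots)$ as a weakly Cauchy sequence in $c_0$ whose only possible (coordinatewise) limit is the constant sequence $1 \notin c_0$. Your version spells out the use of $(c_0)^* = \ell^1$ and the coordinate functionals more explicitly, but the argument is the same.
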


\begin{proof}
Given $k \in \N$,
define a sequence ${\bf a}_k = \{a_{kn}\}_{n \in \N}$
by setting
$a_{kn} = 1$ for $n \le k$ and $a_{kn} = 0$ for $n > k$.
Then, it is easy to verify that ${\bf a}_k \in c_0$ for each $k \in \N$
and that ${\bf a}_k$ is weakly Cauchy.
However, its pointwise limit
is a constant sequence with value 1 which is not in $c_0$.
This proves the claim.
\end{proof}

Given a Banach space $X$,
we say that
$X$ contains a copy of $c_0$,
if
there exist a sequence $\{x_n\}_{n \in \N}$
of unit vectors and constants $C_1, C_2 > 0$ such that
\begin{align}
C_1 \sup_{n \in \N} |a_n|
\le \bigg\|\sum_{n = 1}^\infty
a_n x_n \bigg\|_X
\le C_2 \sup_{n \in \N} |a_n|
\label{D1}
\end{align}

\noi
for any ${\bf a} = \{a_n\}_{n \in \N} \in c_0$.

Let
\begin{align}
 Y = {\bigg\{
x = \sum_{n = 1}^\infty a_n x_n : \{a_n\}_{n \in \N} \in c_0
\bigg\}}.
\label{D1a}
\end{align}

\noi
In view of \eqref{D1},
we see that  every element in $Y$ is uniquely represented as
\[{\bf a} \cdot {\bf x} = \sum_{n = 1}^\infty a_n x_n \]

 \noi
 for some ${\bf a} = \{a_n\}_{n\in \N} \in c_0$.
  Moreover, $Y$ is isomorphic and ``almost isometric" to $c_0$, meaning that
$$
\|{\bf a}\|_{c_0} \approx \| {\bf a} \cdot {\bf x}\|_{X}
$$
in the sense of \eqref{D1}.
Once again,
in view of \eqref{D1},
we see  that $Y$ is closed in $X$ because $c_0$ is a complete Banach space.

\begin{lemma}\label{LEM:D2}
Let $X$ be a Banach space.
If $X$ contains a copy of $c_0$,
then $X$ is not weakly sequentially complete.

\end{lemma}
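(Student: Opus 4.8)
The plan is to show directly that a Banach space $X$ containing a copy of $c_0$ admits a weakly Cauchy sequence with no weak limit in $X$, mimicking the elementary example in Lemma \ref{LEM:D1}. Let $\{x_n\}_{n \in \N}$ be unit vectors in $X$ satisfying \eqref{D1}, and let $Y$ be the closed subspace in \eqref{D1a}, which is isomorphic to $c_0$ via ${\bf a} \mapsto {\bf a}\cdot{\bf x}$. For each $k \in \N$ set $s_k = \sum_{n=1}^{k} x_n \in Y$; under the isomorphism, $s_k$ corresponds precisely to the sequence ${\bf a}_k$ from the proof of Lemma \ref{LEM:D1}. Since that sequence is weakly Cauchy in $c_0$ and the isomorphism $Y \cong c_0$ is a weak-homeomorphism onto $Y$, the sequence $\{s_k\}$ is weakly Cauchy in $Y$, hence weakly Cauchy in $X$ (a functional in $X^*$ restricts to one in $Y^*$).

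Next I would argue that $\{s_k\}$ has no weak limit in $X$. Suppose for contradiction that $s_k \wto x$ weakly in $X$ for some $x \in X$. I would first show $x \in Y$: by Mazur's theorem the weak limit lies in the norm-closed convex hull of $\{s_k\}$, which is contained in the closed subspace $Y$. But then, since $Y$ is a closed subspace and weak convergence in $X$ restricts to weak convergence in $Y$, we would have $s_k \wto x$ weakly in $Y \cong c_0$. Transporting through the isomorphism, the corresponding partial-sum sequence ${\bf a}_k$ would converge weakly in $c_0$ to some ${\bf a} \in c_0$. Testing against the coordinate functionals ${\bf e}_m \in \l^1 = (c_0)^*$ forces the pointwise limit $a_m = 1$ for every $m$, so ${\bf a}$ would be the constant sequence $(1,1,1,\dots) \notin c_0$, a contradiction. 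Hence no such $x$ exists, and $X$ is not weakly sequentially complete.

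\begin{proof}
Let $\{x_n\}_{n \in \N}$ be unit vectors in $X$ and $C_1, C_2 > 0$ constants such that \eqref{D1} holds, and let $Y$ be as in \eqref{D1a}. As noted above, $Y$ is a closed subspace of $X$ and the map $\Phi \colon c_0 \to Y$, $\Phi({\bf a}) = \sum_{n=1}^\infty a_n x_n$, is a Banach space isomorphism by \eqref{D1}. For $k \in \N$, let ${\bf a}_k = \{a_{kn}\}_{n \in \N}$ be the sequence with $a_{kn} = 1$ for $n \le k$ and $a_{kn} = 0$ for $n > k$, and set $s_k = \Phi({\bf a}_k) = \sum_{n=1}^k x_n \in Y$.

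The sequence $\{{\bf a}_k\}_{k \in \N}$ is weakly Cauchy in $c_0$: for any ${\bf b} = \{b_n\}_{n \in \N} \in \l^1 = (c_0)^*$ we have $\jb{{\bf a}_k, {\bf b}} = \sum_{n=1}^k b_n \to \sum_{n=1}^\infty b_n$ as $k \to \infty$. Since $\Phi$ is an isomorphism, it is continuous for the weak topologies, so $\{s_k\}_{k \in \N}$ is weakly Cauchy in $Y$; and since every $\ld \in X^*$ restricts to an element of $Y^*$, the sequence $\{s_k\}_{k \in \N}$ is weakly Cauchy in $X$.

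Suppose, towards a contradiction, that $s_k \wto x$ weakly in $X$ for some $x \in X$. By Mazur's theorem, $x$ belongs to the norm closure of the convex hull of $\{s_k\}_{k \in \N}$, which is contained in the closed subspace $Y$; hence $x \in Y$. As $Y$ is a closed subspace of $X$, weak convergence in $X$ of a sequence in $Y$ to a point of $Y$ implies weak convergence in $Y$, so $s_k \wto x$ weakly in $Y$. Applying the inverse isomorphism $\Phi^{-1}$, which is also weak-to-weak continuous, we get ${\bf a}_k \wto \Phi^{-1}(x) =: {\bf a}$ weakly in $c_0$. Testing against the $m$-th coordinate functional ${\bf e}_m \in \l^1 = (c_0)^*$ gives $a_m = \lim_{k \to \infty} \jb{{\bf a}_k, {\bf e}_m} = \lim_{k \to \infty} a_{km} = 1$ for every $m \in \N$. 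Thus ${\bf a}$ is the constant sequence with all entries equal to $1$, which is not in $c_0$ --- a contradiction. Therefore $\{s_k\}_{k \in \N}$ has no weak limit in $X$, and $X$ is not weakly sequentially complete.
\end{proof}

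I expect the main subtlety to be the step showing that the weak limit $x$, if it existed, must lie in $Y$: one cannot a priori assume a weak limit of elements of a closed subspace stays in that subspace unless one invokes that closed subspaces are weakly closed (equivalently, Mazur's theorem on the coincidence of weak and norm closures of convex sets). Once $x \in Y$ is secured, transporting the problem through the isomorphism $\Phi$ to $c_0$ and deriving the contradiction is routine. Note that $\VMO$, $\CMO$, and $\BMO$ will then fail to be weakly sequentially complete once one exhibits a copy of $c_0$ inside $\CMO$ (and uses Lemma \ref{LEM:B3} together with the inclusions \eqref{embed1} to handle $\VMO$ and $\BMO$); that construction is carried out in what follows.
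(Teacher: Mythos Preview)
Your proof is correct. The paper's argument is shorter and more modular: it simply observes that $Y \cong c_0$ (so $Y^* \cong \ell^1$ and the $Y$--$Y^*$ pairing coincides with the $c_0$--$\ell^1$ pairing), concludes from Lemma~\ref{LEM:D1} that $Y$ is not weakly sequentially complete, and then applies the contrapositive of Lemma~\ref{LEM:B3} to deduce that $X$ cannot be weakly sequentially complete either. You instead construct the explicit witnessing sequence $\{s_k\}$ and argue directly in $X$, invoking Mazur's theorem (closed subspaces are weakly closed) to force any putative weak limit into $Y$; this effectively inlines the proof of Lemma~\ref{LEM:B3} for the particular sequence at hand and then reruns Lemma~\ref{LEM:D1} inside $c_0$. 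Both routes rest on the same two facts---$c_0$ is not weakly sequentially complete and closed subspaces inherit weak sequential completeness---so the difference is one of packaging rather than substance: the paper's version is cleaner once Lemma~\ref{LEM:B3} is on the shelf, while yours is more self-contained.
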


Under an additional assumption that the space $X$ has
an unconditional basis,
the converse of Lemma \ref{LEM:D2} also holds true;
see \cite[Theorem 1.c.10]{LT}.

\begin{proof}[Proof of Lemma \ref{LEM:D2}]
Since $X$ contains a copy of $c_0$,
there exists
$\{x_n\}_{n \in \N} \subset X$,
satisfying~\eqref{D1}.
Define a subspace $Y$ of $X$ as in \eqref{D1a}.
Recalling
that $c_0^* = \l^1$,
we see that $Y^* \cong \l^1$.
Indeed, given ${\bf b} = \{b_n\}_{n \in \N} \in \l^1$,
we set  ${\bf b}(x) = \sum_{n = 1}^\infty a_n b_n$
for $x = {\bf a} \cdot {\bf x} \in Y$.
By noting that the $Y$-$Y^*$ duality pairing agrees with the $c_0$-$\l^1$
duality pairing,
it follows   from
 Lemma \ref{LEM:D1}
 that
$Y$ is not  weakly sequentially complete.
Finally, by recalling  that $Y$ is a closed subspace of $X$,
we conclude from Lemma \ref{LEM:B3}
that $X$ is
not weakly sequentially complete.
\end{proof}

We are now ready to present a proof of
Proposition \ref{PROP:D3}.

\begin{proof}[Proof of Proposition \ref{PROP:D3}]

We only consider the $d = 1$ case.
Define a function $\varphi$ on $\R$ by setting
\begin{align*}
\varphi (x) = \begin{cases}
1, & 0 \le x\le 1, \\
-1, & -1 \le x < 0, \\
0, & \text{otherwise}.
\end{cases}
\end{align*}
	
\noi
Then, a direct computation shows that $\|f\|_{\BMO} = 1$.
While the function $f$ suffices for showing
weakly sequential non-completeness of $\BMO$\,,
we need to smooth it out to treat the $\VMO$ and $\CMO$
cases since $\varphi \notin \VMO$\,.

Let $\rho:\R \to [0, 1]$
be a smooth even function with compact support,
and set $\rho_\eps(x) = \eps^{-1} \rho(\eps^{-1}x)$.
Fix small $\eps > 0$ and set $\varphi_\eps = \rho_\eps* \varphi$.
Then,
we have

\smallskip

\begin{itemize}
\item $\supp \varphi_\eps \subset [-2, 2]$,

\smallskip

\item $\|\varphi_\eps \|_\BMO \sim1$,

\smallskip

\item $\varphi_\eps \in \VMO$ and hence $\varphi_\eps \in \CMO$ (in view of its compact support).

\end{itemize}

Now, given $n \in \N$,
we set $f_n(x) = \|\varphi_\eps \|_\BMO^{-1} \cdot \varphi_\eps(x - 10n)$.
Then, we define a subspace $Y$ of $\BMO$ by
\begin{align*}
 Y ={ \bigg\{
f =  \sum_{n = 1}^\infty a_n f_n : {\bf a} = \{a_n\}_{n \in \N} \in c_0
\bigg\}}.
\end{align*}

\noi
We claim that
that there exist
constants $C_1, C_2 > 0$ such that
\begin{align}
C_1 \sup_{n \in \N} |a_n|
\le \bigg\|\sum_{n = 1}^\infty
a_n f_n \bigg\|_\BMO
\le C_2 \sup_{n \in \N} |a_n|
\label{D4}
\end{align}
\noi
for any ${\bf a} = \{a_n\}_{n \in \N} \in c_0$.
In fact, it follows from
\cite[Proposition 3.1.2 (2)]{G2},
the disjointness of the supports of $f_n$,
and the fact that
the $L^\infty$-norm of $f_n$ is constant in $n \in \N$
that
\[
\bigg\|\sum_{n = 1}^\infty a_n f_n \bigg\|_{\BMO}
\leq 2 \bigg\|\sum_{n = 1}^\infty a_n f_n \bigg\|_{L^\infty} \leq C_2 \sup_{n \in \N} |a_n|.
\]

\noi
On the other hand,  there exists $C_1$, independent of  $k \in \N$,  such that
\[
\bigg\|\sum_{n = 1}^\infty a_n f_n \bigg\|_{\BMO}
\geq  \frac {1}{|\supp f_k|}\int_{\supp f_k} |a_k f_k(x)|\,dx= C_1 |a_k|
\]

\noi
for any $k \in \N$,
 where we used the fact that $f_k$ has mean zero
 in the first step and the second step follows from
noting that the size of the support and the $L^1$-norm of $f_k$ do not depend on $k \in \N$.
This proves~\eqref{D4}.

From the right-hand side of \eqref{D4}, we see that every element in $Y$
is the limit in \BMO of $C_c^\infty$-functions (namely their partial sums).
Hence, from
\cite [Th\'eor\`eme 7 (ii)]{Bour}, we see that  $Y\subset \CMO$.
This shows that $\CMO$
contains a copy of $c_0$.
Hence, from Lemma~\ref{LEM:D2},
we conclude that \CMO
 is not weakly sequentially complete.
By applying Lemma \ref{LEM:B3},
we also see that  $\BMO$ and $\VMO$
are not weakly sequentially complete.
\end{proof}

\smallskip

We conclude this section by recalling that
 \BMO is sequentially complete with respect to the weak-$\ast$ topology as the dual of $H^1$.
 In fact,  this is a particular case of the following general result.
While the proof is standard, we include it for readers' convenience.

\begin{lemma}\label{LEM:A100}
Let $X$ be a Banach space. Then,  $X^*$ is weak-$\ast$ sequentially complete.
\end{lemma}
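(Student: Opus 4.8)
The plan is to construct the candidate weak-$\ast$ limit by a pointwise limiting procedure and then certify its boundedness via the uniform boundedness principle. First I would take a weak-$\ast$ Cauchy sequence $\{f_n\}_{n \in \N}$ in $X^*$; unwinding the definition of the weak-$\ast$ topology, this says precisely that for every $x \in X$ the scalar sequence $\{\jb{f_n, x}\}_{n \in \N}$ is Cauchy, hence convergent by completeness of the scalar field. This allows me to define $f \colon X \to \mathbb{C}$ by $f(x) = \lim_{n \to \infty} \jb{f_n, x}$, and linearity of $f$ is inherited from that of the $f_n$ together with the linearity of limits.

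The one substantive step is to verify that $f \in X^*$, i.e.\ that $f$ is bounded. Since $\{\jb{f_n, x}\}_{n}$ converges for each fixed $x$, it is bounded, so $\sup_{n \in \N} |\jb{f_n, x}| < \infty$ for every $x \in X$; the Banach--Steinhaus theorem then produces a finite constant $C := \sup_{n \in \N} \|f_n\|_{X^*}$. Letting $n \to \infty$ in the inequality $|\jb{f_n, x}| \le C \|x\|_X$ yields $|f(x)| \le C \|x\|_X$ for all $x \in X$, so $f$ is a bounded linear functional. Finally, the definition of $f$ says exactly that $\jb{f_n, x} \to \jb{f, x}$ for every $x \in X$, which is weak-$\ast$ convergence $f_n \to f$ in $X^*$; hence $X^*$ is weak-$\ast$ sequentially complete.

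I do not anticipate any real obstacle: the entire content is the observation that a pointwise limit of functionals, a priori only linear, is automatically bounded, and this is an immediate consequence of uniform boundedness. (As the paper already notes, this is what underlies the weak-$\ast$ sequential completeness of $\BMO = (H^1)^*$, in contrast with its failure to be weakly sequentially complete, which is the content of Proposition~\ref{PROP:D3}.)
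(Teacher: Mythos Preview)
Your proof is correct. It is close in spirit to the paper's but slightly more direct: the paper first invokes Banach--Steinhaus to get uniform boundedness and then Banach--Alaoglu to extract a weak-$\ast$ convergent \emph{subsequence}, after which the Cauchy hypothesis is used to upgrade subsequential to full convergence. You instead build the limit functional $f$ directly as the pointwise limit $f(x)=\lim_n \jb{f_n,x}$ and only then apply Banach--Steinhaus to certify $f\in X^*$. Your route is arguably cleaner, since extracting a convergent subsequence via Banach--Alaoglu genuinely requires $X$ to be separable (weak-$\ast$ compactness of the ball does not give sequential compactness otherwise), whereas your argument works for an arbitrary Banach space $X$ without modification.
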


\begin{proof}
 Let $\{f_n \}_{n \in \N}$
be a weak-$\ast$ Cauchy sequence in $X^*$.
From  the Banach-Steinhaus theorem,
we see that  $\sup_{n \in \N} \|f_n\|_{X^*} < \infty$.
Then, it follows from the Banach-Alaoglu theorem
that there exists a  subsequence
 $\{f_{n_{j}}\}_{j \in \N}$ and  $f \in X^*$ such that, as  $j \to \infty$, $f_{n_j} \to f$ with respect to the weak-$\ast$ topology.
That is,
\[ \jb{f, h} = \lim_{k\to \infty} \jb{f_{n_{j_k}}, h}
= \lim_{n \to \infty} \jb{f_n, h}\]

\noi
for any $h \in X$,
where the second equality follows from the fact that
 $\{f_n \}_{n \in \N}$
is  weak-$\ast$ Cauchy in $X^*$ (which in particular
guarantees uniqueness of the weak-$*$ limit).
 The proof is complete.
\end{proof}

%
%If the above is true then we have another property. If T^*1=0 and T is compact in L^2 then  T:H^1 \to H^1 is compact (and if T1=0 too then by duality from BMO to BMO).
%This would follow from the Riesz transform characterization of H^1, i.e
%
%H1-norm (f) \apporx  L^1-norm(f) + sum L^1-norm(R_j f)
%
%Just note that if T*1=0 then RjT is a Calderon-Zygmun operator (I can give you some references) and since T is compact in L^2 and R_j is bounded then R_jT is compact in L^2 and hence also from H^1 \to L^1.  Then from
%
%H1-norm (Tf) \apporx  L^1-norm(Tf) + sum L^1-norm(R_jT f)
%
%We get the compactness H^1 \to H^1.
%

\begin{ackno}\rm

\'A.B.  acknowledges the support from
 an  AMS-Simons Research Enhancement Grant for PUI Faculty.
G.L. and T.O.~were supported by the European Research Council (grant no.~864138 ``SingStochDispDyn").
The first three authors would like to thank
the West University of Timi\c{s}oara
%Universitatea de Vest din Timi\c{s}oara
for its
hospitality, where part of this paper was prepared.

\end{ackno}

\medskip

\noi
{\bf Declarations of interest.} None.

\end{document}